\newtheorem{defn0}{Definition}[section]
\newtheorem{prop0}[defn0]{Proposition}
\newtheorem{conj0}[defn0]{Conjecture}
\newtheorem{thm0}[defn0]{Theorem}
\newtheorem{lem0}[defn0]{Lemma}
\newtheorem{corollary0}[defn0]{Corollary}
\newtheorem{example0}[defn0]{Example}
\newtheorem{remark0}[defn0]{Remark}
\newtheorem{que0}[defn0]{Question}
\newenvironment{defn}{\begin{defn0}}{\end{defn0}}
\newenvironment{prop}{\begin{prop0}}{\end{prop0}}
\newenvironment{thm}{\begin{thm0}}{\end{thm0}}
\newenvironment{lem}{\begin{lem0}}{\end{lem0}}
\newenvironment{cor}{\begin{corollary0}}{\end{corollary0}}
\newenvironment{exm}{\begin{example0}\rm}{\end{example0}}
\newenvironment{remark}{\begin{remark0}\rm}{\end{remark0}}
\newcommand{\V}{{\bf V }}
\renewcommand{\P}{{\mathbb{P}}}
\newcommand{\FF}{{\mathbb{F}}}
\newcommand{\OPP}{{\mathcal{O}_{\mathbb{P}^1 \times \mathbb{P}^1}}}
\newcommand{\PP}{{\mathbb{P}^1 \times \mathbb{P}^1}}
\newcommand{\OPone}{{\mathcal{O}_{\mathbb{P}^1}}}
\newcommand{\spn}{{\mathrm{Span}}}
\newcommand{\DET}{{\mathrm{det}}}
\newcommand{\rank}{{\mathrm{rank}}}
\newcommand{\im}{{\mathrm{im}}}
\newcommand{\depth}{{\mathrm{depth}}}
\newcommand{\coker}{{\mathrm{coker}}}
\newcommand{\Ass}{{\mathrm{Ass}}}
\newcommand{\Sing}{{\mathrm{Sing}}}
\newcommand{\Pic}{{\mathrm{Pic}}}
\newcommand{\mm}{{\mathfrak{m}}}
\numberwithin{equation}{section}
\begin{document}
\title[Syzygies and singularities of tensor product surfaces]%
{Syzygies and singularities of tensor product surfaces of bidegree $(2,1)$} 

\author{Hal Schenck}
\thanks{Schenck supported by NSF 1068754, NSA H98230-11-1-0170}
\address{Department of Mathematics, University of Illinois, 
Urbana, IL 61801}
\email{schenck@math.uiuc.edu}

\author{Alexandra Seceleanu}
\address{Department of Mathematics, University of Nebraska,
    Lincoln, NE 68588}
\email{aseceleanu2@math.unl.edu}

\author{Javid Validashti}
\address{Department of Mathematics, University of Illinois, 
Urbana, IL 61801}
\email{jvalidas@illinois.edu}

\keywords{Tensor product surface, bihomogeneous ideal, Segre-Veronese map}

\begin{abstract}
Let $U \subseteq H^0(\OPP(2,1))$ be a 
basepoint free four-dimensional vector space.
The sections corresponding to $U$ determine a regular map 
$\phi_U: \PP \longrightarrow \P^3$. We study the associated
bigraded ideal $I_U \subseteq k[s,t;u,v]$ from the standpoint
of commutative algebra, proving that there are exactly six
numerical types of possible bigraded minimal free resolution. These
resolutions play a key role in determining the implicit 
equation for $\phi_U(\PP)$, via work of Bus\'e-Jouanolou \cite{bj}, 
Bus\'e-Chardin \cite{bc}, Botbol \cite{bot} and 
Botbol-Dickenstein-Dohm \cite{bdd} 
on the approximation complex $\mathcal{Z}$. In four of the six cases 
$I_U$ has a linear first syzygy; remarkably from this we obtain all
differentials in the minimal free resolution. In particular this
allows us to explicitly describe the implicit equation and
singular locus of the image.
\end{abstract}

\maketitle

\section{Introduction}
A central problem in geometric modeling is to find simple 
(determinantal or close to it) equations for the image of 
a curve or surface defined by a regular or rational map. For 
surfaces the two most common situations are when  
$\PP \longrightarrow \P^3$ or $\P^2 \longrightarrow \P^3$. Surfaces
of the first type are called {\it tensor product surfaces} and
surfaces of the latter type are called {\it triangular surfaces}.
In this paper we study tensor product surfaces of bidegree
$(2,1)$ in  $\P^3$. The study of such surfaces goes back to the
last century--see, for example, works of Edge \cite{edge} and 
Salmon \cite{salmon}. 

Let $R = k[s,t,u,v]$ be a bigraded ring over an algebraically closed
field $k$, with $s,t$ of degree $(1,0)$ and $u,v$ of degree $(0,1)$.  
Let $R_{m,n}$ denote the graded piece in bidegree $(m,n)$.  A regular map 
$\PP \longrightarrow \P^3$ is defined by four  polynomials
\[
U= \spn \{ p_{0}, p_{1}, p_{2}, p_{3} \} \subseteq R_{m,n}
\]
with no common zeros on $\PP$. We will study the case $(m,n) = (2,1)$,
so
\[
U \subseteq H^0(\OPP(2,1)) = V = \spn \{ s^2u,stu,t^2u,s^2v,stv,t^2v \}.
\]
Let $I_U=\langle  p_{0}, p_{1}, p_{2}, p_{3}\rangle \subset R$,
$\phi_U$ be the associated map $\PP \longrightarrow \P^3$
and 
\[
X_U = \phi_U(\PP) \subseteq \P^3.
\]
We assume that $U$ is basepoint free, which means that 
\[
\sqrt{I_U} = \langle s,t\rangle \cap \langle u,v \rangle.
\]
We determine {\em all} possible numerical types of bigraded minimal free resolution for
$I_U$, as well as the embedded associated primes of $I_U$. 
Using approximation complexes, we relate the algebraic 
properties of $I_U$ to the geometry of $X_U$.
The next example illustrates our results.
\begin{exm}\label{ex1}
Suppose $U$ is basepoint free and $I_U$ has a unique
first syzygy of bidegree $(0,1)$. Then the primary 
decomposition of $I_U$ is given by Corollary~\ref{T5PD},
and the differentials in the bigraded minimal free resolution 
are given by Proposition~\ref{LS3}.  
For example, if $U = \spn \{s^2u, s^2v, t^2u, t^2v+stv \}$,
then by Corollary~\ref{T5PD} and Theorem~\ref{T5exact}, the
embedded primes of $I_U$ are $\langle s,t,u \rangle$ and 
$\langle s,t,v \rangle$, 
and by Proposition~\ref{LS3} the bigraded Betti numbers of $I_U$ are:
\[
0 \leftarrow I_U \leftarrow R(-2,-1)^4  \leftarrow 
R(-2,-2) \oplus R(-3,-2)^2 \oplus R(-4,-1)^2
\leftarrow  R(-4,-2)^2
\leftarrow 0
\]
Having the differentials in the free resolution allows us to 
use the method of approximation complexes to determine the
implicit equation: it follows from Theorem~\ref{ImX} that 
the image of $\phi_U$ is the hypersurface 
\[
X_U = \V(x_0x_1^2x_2-x_1^2x_2^2+2x_0x_1x_2x_3-x_0^2x_3^2).
\]
Theorem~\ref{SingX} shows that the reduced codimension one singular locus of $X_U$ is $\V(x_0,x_2) \cup \V(x_1,x_3) \cup \V(x_0,x_1)$.
\begin{figure}[ht]
\begin{center}
\includegraphics[width=.31\textwidth]{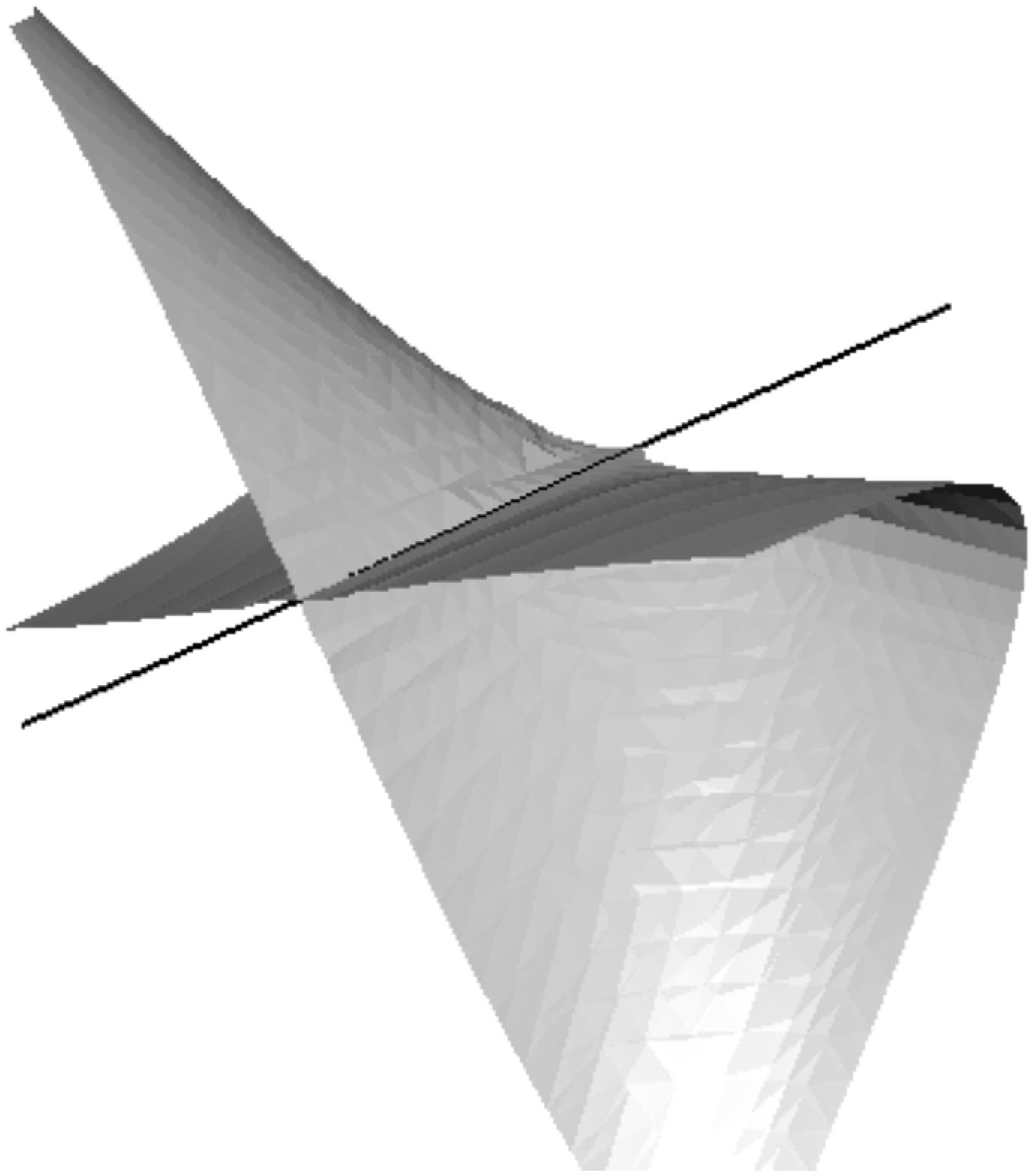}
\end{center}
\caption{\textsf{$X_U$ on the open set $U_{x_0}$}}
\label{fig:surfaceExample}
\end{figure}
The key feature of this example is that there is a linear syzygy of 
bidegree $(0,1)$:
\[
v \cdot (s^2u) -u \cdot(s^2v) =0.
\]
In Lemmas~\ref{LS1} and \ref{LS2-10} we show that with an appropriate
choice of generators for $I_U$, any bigraded linear first syzygy has the form above.
Existence of a bidegree $(0,1)$ syzygy implies that the 
pullbacks to $\PP$ of the two linear forms defining 
$\P(U)$ 
share a factor. Theorem~\ref{GPsyz} connects this to work of \cite{gl}.
\end{exm}
\subsection{Previous work on the $(2,1)$ case}
For surfaces in $\P^3$ of bidegree $(2,1)$, in addition to the 
classical work of Edge, Salmon and  others, more recently 
Degan \cite{d} studied such surfaces with basepoints and  
Zube \cite{z1}, \cite{z2} describes the possibilities for 
the singular locus. In \cite{egl}, Elkadi-Galligo-L\^{e} 
give a geometric description of the image 
and singular locus for a generic $U$ and  in \cite{gl}, 
Galligo-L\^{e} follow up with an analysis for the
nongeneric case. A central part of their analysis is 
the geometry of a certain dual scroll which we connect
to syzygies in \S 8.

Cox, Dickenstein and Schenck study the bigraded commutative algebra 
of a three dimensional basepoint free subspace
$W \subseteq R_{2,1}$ in \cite{cds}, showing that there are two 
numerical types of 
possible bigraded minimal free resolution of $I_W$, 
determined by how $\P(W) \subseteq \P(R_{2,1}) = \P^5$ 
meets the image $\Sigma_{2,1}$ of the Segre map 
$\P^2 \times  \P^1 \stackrel{\sigma_{2,1}}{\longrightarrow} \P^5.$
If $W$ is basepoint  free, then there are two possibilities: 
either $\P(W) \cap \Sigma_{2,1}$ is a finite set of points, or a 
smooth conic. The current paper extends the work of \cite{cds} to
the more complicated setting of a four dimensional space of sections.
A key difference is that for a basepoint free subspace $W$ of dimension
three, there can never be a linear syzygy on $I_W$. As illustrated
in the example above, this is not true for the four dimensional 
case. It turns out that the existence of a linear syzygy provides a
very powerful tool for analyzing both the bigraded commutative
algebra of $I_U$, as well as for determining the implicit 
equation and singular locus of 
$X_U$. In studying the bigraded commutative algebra of $I_U$, we employ
a wide range of tools
\begin{itemize}
\item Approximation complexes  \cite{bot}, \cite{bdd}, \cite{bj}, \cite{bc}, \cite{c}. 
\item Bigraded generic initial ideals \cite{ACD}.
\item Geometry of the Segre-Veronese variety \cite{h}.
\item Fitting ideals and Mapping cones \cite{ebig}.
\item Connection between associated primes and Ext modules \cite{ehv}.
\item Buchsbaum-Eisenbud exactness criterion \cite{be}.
\end{itemize}
\subsection{Approximation complexes}
The key tool in connecting the syzygies of $I_U$ to the implicit equation
for $X_U$ is an approximation complex, introduced by Herzog-Simis-Vasconcelos in
\cite{hsv1},\cite{hsv2}. We give more details of the 
construction in \S 7. The basic idea is as follows: 
let $R_I = R \oplus I_U \oplus I_U^2 \oplus \cdots$. Then the graph 
$\Gamma$ of the map $\phi_U$ is equal to $BiProj(R_I)$ and  the embedding
of $\Gamma$ in $(\PP) \times \P(U)$ corresponds to the ring map 
$S = R[x_0,\ldots,x_3] \stackrel{s}{\rightarrow} R_I$
given by $x_i \mapsto p_i$. Let $\beta$ denote the kernel of $s$, so $\beta_1$
consists of the syzygies of $I_U$ and  $S_I = Sym_R(I) = S/\beta_1$. 
Then 
\[
\Gamma \subseteq BiProj(S_I)  \subseteq BiProj(S).
\]
The works \cite{bdd}, \cite{bj}, \cite{bc}, \cite{bot} show 
that if $U$ is basepoint free, then the implicit equation for $X_U$ 
may be extracted from the differentials of a complex $\mathcal{Z}$ 
associated to the intermediate object $S_I$ and in particular the determinant of
the complex is a power of the implicit equation. 
In bidegree $(2,1)$, a result of Botbol \cite{bot} shows that 
the implicit equation may be obtained from a $4 \times 4$ minor 
of $d_1$; our work yields an explicit description of the relevant minor.
\subsection{Main results}
The following two tables describe our classification. 
Type refers to the graded Betti numbers of the bigraded minimal free 
resolution for $I_U$: we prove there are six numerical types possible. 
Proposition~\ref{PD2} shows that the only possible 
embedded primes of $I_U$ are $\mm = \langle s,t,u,v \rangle$
or $P_i = \langle l_i, s, t \rangle$, where $l_i$ is a linear
form of bidegree $(0,1)$. While Type 5a and 5b have the same bigraded Betti
numbers, Proposition~\ref{LS3} and Corollary~\ref{T5PD} show that
both the embedded primes and the 
differentials in the minimal resolution differ. 
We also connect our 
classification to the reduced, codimension one singular 
locus of $X_U$. In the table below $T$ denotes a twisted 
cubic curve, $C$ a smooth plane conic
and $L_i$ a line.
\begin{table}[ht]
\begin{center}
\begin{supertabular}{|c|c|c|c|c|}
\hline Type & Lin. Syz. & Emb. Pri. & Sing. Loc. & Example \\
\hline 1 & none             & $\mm$       & $T$                    &$\!s^2u\!+\!stv,t^2u,s^2v\!+\!stu,t^2v\!+\!stv\!$\\
\hline 2 & none             & $\mm, P_1$  & $C \cup L_1$            & $s^2u,t^2u,s^2v+stu,t^2v+stv$ \\
\hline 3 & 1 type $(1,0)$   & $\mm$       & $L_1$                   & $s^2u+stv,t^2u,s^2v,t^2v+stu$ \\
\hline 4 & 1 type $(1,0)$   & $\mm, P_1$  & $L_1$                  & $stv,t^2v,s^2v-t^2u,s^2u$ \\
\hline 5a & 1 type $(0,1)$   & $P_1, P_2$  & $L_1 \cup L_2 \cup L_3$  &  $s^2u, s^2v, t^2u, t^2v+stv$\\
\hline 5b & 1 type $(0,1)$   & $P_1$       & $L_1 \cup L_2$           & $s^2u, s^2v, t^2u, t^2v+stu$ \\
\hline 6 & 2 type $(0,1)$   & none        & $\emptyset$             & $s^2u,s^2v,t^2u,t^2v$ \\
\hline
\end{supertabular}
\end{center}
\caption{\textsf{}}
\label{T1}
\end{table}

\pagebreak
The next table gives the possible numerical types for the 
bigraded minimal free resolutions, where we write $(i,j)$ for the
rank one free module $R(i,j)$. We prove more: for Types 3, 4, 5 and 6, 
we determine all the differentials in the minimal free resolution.
One striking feature of Table 1 is that if $I_U$ has a linear first syzygy
(i.e. of bidegree $(0,1)$ or $(1,0)$), then the codimension one singular locus of
$X_U$ is either empty or a union of lines. We prove this in Theorem~\ref{SingX}.
\begin{table}[ht]
\begin{center}
\begin{supertabular}{|c|c|}
\hline Type & Bigraded Minimal Free Resolution of $I_U$ for $U$ basepoint free \\
\hline 1 &   $0 \leftarrow I_U \leftarrow (-2,-1)^4 \longleftarrow \begin{array}{c}
 (-2,-4)\\
 \oplus \\
(-3,-2)^4\\
 \oplus \\
(-4,-1)^2\\
\end{array} \longleftarrow 
\begin{array}{c}
 (-3,-4)^2\\
 \oplus \\
(-4,-2)^3\\
 \end{array} \longleftarrow 
(-4,-4)\leftarrow 0$ \\

\hline 2 &   $0 \leftarrow I_U \leftarrow (-2,-1)^4 \longleftarrow \begin{array}{c}
 (-2,-3)\\
 \oplus \\
(-3,-2)^4\\
 \oplus \\
(-4,-1)^2\\
\end{array} \longleftarrow 
\begin{array}{c}
 (-3,-3)^2\\
 \oplus \\
(-4,-2)^3\\
 \end{array} \longleftarrow 
(-4,-3) \leftarrow 0 $\\

\hline 3 & $0 \leftarrow I_U \leftarrow  (-2,-1)^4   \longleftarrow \begin{array}{c}
 (-2,-4)\\
 \oplus \\
(-3,-1)\\
 \oplus \\
(-3,-2)^2\\
\oplus \\
(-3,-3)\\
 \oplus \\
(-4,-2)\\
\oplus \\
(-5,-1)\\
\end{array} \longleftarrow 
\begin{array}{c}
 (-3,-4)^2\\
 \oplus \\
(-4,-3)^2\\
\oplus \\
(-5,-2)^2\\
 \end{array} \longleftarrow 
\begin{array}{c}
 (-4,-4)\\
 \oplus \\
(-5,-3)\\
 \end{array} \leftarrow 0 $\\

\hline 4 &  $0 \leftarrow I_U \leftarrow (-2,-1)^4 \longleftarrow \begin{array}{c}
 (-2,-3)\\
 \oplus \\
(-3,-1)\\
 \oplus \\
(-3,-2)^2\\
 \oplus \\
(-4,-2)\\
\oplus \\
(-5,-1)\\
\end{array} \longleftarrow 
\begin{array}{c}
 (-3,-3)\\
 \oplus \\
(-4,-3)\\
\oplus \\
(-5,-2)^2\\
 \end{array} \longleftarrow 
(-5,-3) \leftarrow 0$\\
 
\hline 5 &  \!\!\!\!\!\!\!\!\!\!\!\!\!\!\!\!\!\!\!\!\!\!\!\!\!\!\!\!\!\!\!\!\!\!\!\!\!\!\!\!\!$0 \leftarrow I_U \leftarrow (-2,-1)^4 \longleftarrow \begin{array}{c}
(-2,-2)\\
\oplus\\
(-3,-2)^2 \\
\oplus\\
(-4,-1)^2 \\
\end{array}
\longleftarrow  (-4,-2)^2
\leftarrow 0$ \\

\hline 6 & \!\!\!\!\!\!\!\!\!\!\!\!\!\!\!\!\!\!\!\!\!\!\!\!\!\!\!\!\!\!\!\!\!\!\!\!\!\!\!\!\!\!\!\!$0 \leftarrow I_U \leftarrow (-2,-1)^4 \longleftarrow \begin{array}{c}
 (-2,-2)^2\\
 \oplus \\
(-4,-1)^2\\
\end{array} \longleftarrow (-4,-2)\leftarrow 0$ \\
\hline
\end{supertabular}
\end{center}
\caption{\textsf{}}
\label{T2}
\end{table}
\vskip .1in

\section{Geometry and the Segre-Veronese variety}
Consider the composite maps
\begin{equation}\label{e1}
\xymatrix@C=8pt{
 {\PP} \ar[r] \ar[drr]^{\phi_U} &  \mathbb{P}(H^0(\OPone(2)))
\times  \mathbb{P}(H^0(\OPone(1))) \ar[r] &  \mathbb{P}(H^0(\OPP(2,1)))\ar[d]^{\pi}    \\
 & & \mathbb{P}(U)}
\end{equation}
The first horizontal map is $\nu_2 \times id$, where $\nu_2$ is the 
2-uple Veronese embedding and the second horizontal map is
the Segre map $\sigma_{2,1}: \mathbb{P}^2 \times \mathbb{P}^1 \rightarrow  
\mathbb{P}^5$. The image of $\sigma_{2,1}$ is a smooth 
irreducible nondegenerate cubic threefold $\Sigma_{2,1}$. 
Any $\P^2 \subseteq \Sigma_{2,1}$ is a fiber over a point
of the $\P^1$ factor and any $\P^1 \subseteq \Sigma_{2,1}$ is 
contained in the image of a fiber over $\P^2$ or
$\P^1$. For this see Chapter 2 of \cite{harris}, which also points
out that the Segre and Veronese maps have coordinate free descriptions
\[
\begin{array}{ccc}
\mathbb{P}(A) &\stackrel{\nu_d}{\longrightarrow}& \mathbb{P}(Sym^d A)\\
\mathbb{P}(A) \times \mathbb{P}(B) &\stackrel{\sigma}{\longrightarrow}& \mathbb{P}(A \otimes B)
\end{array}
\]
By dualizing we may interpret the image of $\nu_d$ as the 
variety of $d^{th}$ powers of linear forms on $A$ 
and the image of $\sigma$ as the variety of products of 
linear forms. The composition $\tau = \sigma_{2,1}\circ (\nu_2 \times id)$ 
is a Segre-Veronese map, with image consisting of polynomials which factor as 
$l_1(s,t)^2\cdot l_2(u,v)$. Note that $\Sigma_{2,1}$ is also 
the locus of polynomials in $R_{2,1}$ which factor as $q(s,t) \cdot l(u,v)$, with
$q \in R_{2,0}$ and $l \in R_{0,1}$. Since $q \in R_{2,0}$ factors as 
$l_1 \cdot l_2$, this means $\Sigma_{2,1}$ is the locus of polynomials 
in $R_{2,1}$ which factor completely as products of linear forms. 
As in the introduction, 
\[
U \subseteq H^0(\OPP(2,1)) = V = \spn \{ s^2u,stu,t^2u,s^2v,stv,t^2v \}.
\]
The ideal of $\Sigma_{2,1}$ is defined by the two by two minors of 
\[
\left[\begin{matrix} x_0 & x_1 & x_2 \\ x_3 & x_4 &x_5 
\end{matrix}\right].
\]
It will also be useful to understand the intersection of $\P(U)$
with the locus of polynomials in $V$ which factor as the product of a form 
$q =a_0su+a_1sv+a_2tu+a_3tv$ of bidegree  
$(1,1)$ and $l = b_0s+b_1t$ of bidegree $(1,0)$. This is the
image of the map 
\[
\P(H^0(\OPP(1,1))) \times \P(H^0(\OPP(1,0))) = \P^3 \times \P^1 \longrightarrow \P^5,
\]
$(a_0:a_1:a_2:a_3) \times (b_0:b_1) \mapsto 
(a_0b_0:a_0b_1+a_2b_0:a_2b_1:a_1b_0:a_1b_1+a_3b_0:a_3b_1)$, which
is a quartic hypersurface
\[
Q = \V({x}_{2}^{2} {x}_{3}^{2}-{x}_{1} {x}_{2} {x}_{3} {x}_{4}+{x}_{0} {x}_{2}
     {x}_{4}^{2}+{x}_{1}^{2} {x}_{3} {x}_{5}-2 {x}_{0} {x}_{2} {x}_{3} {x}_{5}-{x}_{0} {x}_{1} {x}_{4} {x}_{5}+{x}_{0}^{2} {x}_{5}^{2}).
\]

\noindent As Table 1 shows, the key to classifying the minimal 
free resolutions is understanding the {\em linear} syzygies.
In \S 3, we show that if $I_U$ has a first syzygy of bidegree $(0,1)$, then 
after a change of coordinates, 
$I_U = \langle pu, pv, p_2,p_3 \rangle$ and  if $I_U$ 
has a first syzygy of bidegree $(1,0)$, then 
$I_U = \langle ps, pt, p_2,p_3 \rangle$. 
\begin{prop}\label{LinSyzGeom}
If $U$ is basepoint  free, then the ideal $I_U$ 
\begin{enumerate}
\item has a unique linear syzygy of bidegree $(0,1)$ iff
$F \subseteq \P(U) \cap \Sigma_{2,1}$, where $F$ is a $\P^1$ fiber of
$\Sigma_{2,1}$.
\item has a pair of linear syzygies of bidegree $(0,1)$
iff $\P(U) \cap \Sigma_{2,1} = \Sigma_{1,1}$.
\item has a unique linear syzygy of bidegree $(1,0)$ 
iff $F \subseteq \P(U) \cap Q$, where $F$ is a $\P^1$ fiber of
$Q$.
\end{enumerate}
\end{prop}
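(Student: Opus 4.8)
The plan is to turn a linear first syzygy of $I_U$ into a normal form for a minimal generating set and then read off the position of $\P(U)$ relative to $\Sigma_{2,1}$ and $Q$. The needed normal forms are those discussed above and established in \S 3: a bidegree $(0,1)$ first syzygy may, after changing the basis of $U$ and the coordinates $u,v$, be taken to be $vp_0 - up_1 = 0$, and unique factorization in the domain $R$ then forces $p_0 = pu$, $p_1 = pv$ for some $p \in R_{2,0}$, so $I_U = \langle pu,pv,p_2,p_3\rangle$; two independent such syzygies give $I_U = \langle pu,pv,qu,qv\rangle$ with $p,q \in R_{2,0}$ independent, equivalently $U = W \otimes R_{0,1}$ for the pencil $W = \spn\{p,q\}$; and a bidegree $(1,0)$ first syzygy may likewise be taken to be $tp_0 - sp_1 = 0$, whence $I_U = \langle ps,pt,p_2,p_3\rangle$ with $p \in R_{1,1}$.

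Parts (1) and (3) then reduce to a dictionary between these normal forms and $\P^1$-fibres. When $I_U = \langle pu,pv,p_2,p_3\rangle$, every point of the line $\spn\{[pu],[pv]\} \subseteq \P(U)$ is a product $[p\cdot(\alpha u + \beta v)]$ of an $R_{2,0}$-form and an $R_{0,1}$-form, so this line lies on $\Sigma_{2,1}$ and is precisely the fibre of the projection $\Sigma_{2,1} = \P(R_{2,0}) \times \P(R_{0,1}) \to \P(R_{2,0})$ over $[p]$. Conversely, if such a fibre $F = \P(pR_{0,1})$ lies in $\P(U)$ then $pu, pv \in U$; extending these to a basis $pu,pv,p_2,p_3$ of $U$ gives the syzygy $v\cdot pu - u\cdot pv = 0$ of bidegree $(0,1)$, which is a minimal first syzygy because the only smaller candidate bidegree $(2,1)$ carries no syzygy (it would be a linear dependence among the $p_i$). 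Since the vectors $pu,pv,qu,qv$ are independent for $[p]\neq[q]$, the space of bidegree $(0,1)$ syzygies of $I_U$ has dimension $0$, $1$, or $2$ according as $\P(U)$ contains no $\P^1$-fibre of $\Sigma_{2,1}$, exactly one, or (equivalently, $U = W\otimes R_{0,1}$) a whole pencil of them; this proves (1) and reduces (2) to the identity treated below. Part (3) is the same argument with $R_{1,0}, R_{1,1}, Q$ replacing $R_{0,1}, R_{2,0}, \Sigma_{2,1}$: for $I_U = \langle ps,pt,p_2,p_3\rangle$ the line $\spn\{[ps],[pt]\} = \P(pR_{1,0}) \subseteq \P(U)$ consists of products $[(b_0 s + b_1 t)\cdot p]$ of an $R_{1,0}$-form with an $R_{1,1}$-form, so it lies on $Q$ and is the fibre of the projection $Q \dashrightarrow \P(R_{1,1})$ over $[p]$, and conversely such a fibre in $\P(U)$ produces the minimal bidegree $(1,0)$ first syzygy $t\cdot ps - s\cdot pt = 0$.

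For (2) it remains to show that $U = W\otimes R_{0,1}$ with $\dim W = 2$ iff $\P(U) \cap \Sigma_{2,1}$ is a Segre surface $\Sigma_{1,1}$. If $U = W\otimes R_{0,1}$ then $\P(U)$ contains $\Sigma_{1,1} := \P(W) \times \P(R_{0,1}) \subseteq \Sigma_{2,1}$; conversely, a point $[w_1 u + w_2 v] \in \P(U) \cap \Sigma_{2,1}$ must equal $[r\cdot(cu+dv)]$ for some $r \in R_{2,0}$, forcing $w_1 = cr$ and $w_2 = dr$, so $w_1, w_2$ are proportional, $r \in W$, and the point lies on $\Sigma_{1,1}$ — hence $\P(U) \cap \Sigma_{2,1} = \Sigma_{1,1}$. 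In the other direction, any Segre $\P^1 \times \P^1$ inside $\Sigma_{2,1} = \P(R_{2,0}) \times \P(R_{0,1})$ is $\P(W) \times \P(R_{0,1})$ for a pencil $W \subseteq R_{2,0}$, and its linear span is the $\P^3 = \P(W\otimes R_{0,1})$; if that lies in $\P(U) \cong \P^3$ then $U = W\otimes R_{0,1}$. There is no ``pair of bidegree $(1,0)$ syzygies'' case for basepoint free $U$: it would force $U = W\otimes R_{1,0}$ with $W\subseteq R_{1,1}$ a pencil, but two bidegree $(1,1)$ curves on $\PP$ always meet, so $U$ would have a basepoint; this explains the absence of such a row from Table~\ref{T1}.

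Granting the \S 3 normal forms, the remaining content is light: the short factorization computation giving $\P(U)\cap\Sigma_{2,1} = \Sigma_{1,1}$, and the birationality of $\P(R_{1,1})\times\P(R_{1,0}) \to Q$ (so that the lines $\P(pR_{1,0})$ are genuinely its $\P^1$-fibres), both from unique factorization in $R$. The point most easily mishandled is the bookkeeping matching the dimension of the linear syzygy space in a given bidegree with the number of $\P^1$-fibres of $\Sigma_{2,1}$ (resp.\ $Q$) contained in $\P(U)$ across all three cases.
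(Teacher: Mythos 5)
Your proof is correct and uses essentially the same mechanism as the paper's: a bidegree $(0,1)$ (resp.\ $(1,0)$) first syzygy forces $I_U$ to contain $pu,pv$ (resp.\ $ps,pt$) via unique factorization in $R$, and this is equivalent to $\P(U)$ containing the corresponding $\P^1$ fiber of $\Sigma_{2,1}$ (resp.\ $Q$). Your write-up is actually more complete than the paper's terse argument — you explicitly match the dimension of the bidegree-$(0,1)$ syzygy space with the number of $\P^1$ fibers contained in $\P(U)$, you prove both directions of part (2), and you verify that the parametrization of $Q$ is birational so the ``fiber'' language is justified — and your forward appeal to Lemma~\ref{LS1} and Lemma~\ref{LS2-10} is harmless since those results do not depend on this proposition.
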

\begin{proof}
The ideal $I_U$ has a unique linear syzygy of bidegree $(0,1)$ iff
$qu,qv \in I_U$, with $q \in R_{2,0}$ iff $q\cdot l(u,v) \in I_U$ for all
$l(u,v) \in R_{0,1}$ iff $\P(U) \cap \Sigma_{2,1}$ contains the 
$\P^1$ fiber over the point $q \in \P(R_{2,0})$. 

For the second item, the reasoning above implies that 
$\P(U) \cap \Sigma_{2,1}$ contains two $\P^1$ fibers, over 
points $q_1,q_2 \in \P(R_{2,0})$. But then $I_U$ also contains
the line in $\P(R_{2,0})$ connecting $q_1$ and $q_2$, as well
as the $\P^1$ lying over any point on the line, yielding a
$\P^1 \times \P^1$. 

For the third part, a linear syzygy of bidegree $(1,0)$ means
that $qs,qt \in I_U$, with $q \in R_{1,1}$ iff $q\cdot l(s,t) \in I_U$ for all
$l(s,t) \in R_{1,0}$ iff $\P(U) \cap Q$ contains the 
$\P^1$ fiber over the point $q \in \P(R_{1,1})$.  
\end{proof}
\noindent In Theorem~\ref{AllLins}, we show that Proposition~\ref{LinSyzGeom}
describes all possible linear syzygies.

\section{First syzygies of bidegree $(0,1)$}
\noindent Our main result in this section is a complete description 
of the minimal free resolution when $I_U$ has a first 
syzygy of bidegree $(0,1)$. As a consequence, if $I_U$ has a unique first
syzygy of bidegree $(0,1)$, then the minimal free resolution has 
numerical Type 5 and  if there are two linear first syzygies of 
bidegree $(0,1)$, the minimal free resolution has numerical Type 6. 
We begin with a simple observation
\begin{lem}\label{LS1}
If $I_U$ has a linear first syzygy of bidegree $(0,1)$, then 
\[
I_U = \langle pu, pv, p_2,p_3 \rangle,
\]
where $p$ is homogeneous of bidegree $(2,0)$.
\end{lem}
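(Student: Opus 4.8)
The plan is to unwind what a bidegree $(0,1)$ first syzygy is, regroup it, and exploit that $R$ is a UFD. Fix a basis $p_0,p_1,p_2,p_3$ of $U$, so $I_U = \langle p_0,p_1,p_2,p_3\rangle$, and suppose $(a_0,a_1,a_2,a_3)$ is a nonzero first syzygy with each $a_i$ of bidegree $(0,1)$, i.e. $a_i = \alpha_i u + \beta_i v$ with $\alpha_i,\beta_i \in k$. Set $A = \sum_i \alpha_i p_i$ and $B = \sum_i \beta_i p_i$; both lie in $U$ (not merely in $I_U$) precisely because the syzygy coefficients are \emph{scalar} combinations of $u$ and $v$. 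The syzygy $\sum_i a_i p_i = 0$ then reads $uA + vB = 0$ in $R$.

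Next I would show $A$ and $B$ are forced to be multiples of $v$ and $u$. Since $R = k[s,t,u,v]$ is a UFD and $u,v$ are non-associate primes, $uA = -vB$ forces $v \mid A$; as $A \in R_{2,1}$ we may write $A = pv$ with $p \in R_{2,0}$. Cancelling $v$ in $upv = -vB$ (legitimate since $R$ is a domain) gives $B = -pu$. Hence $pv = A \in U$ and $pu = -B \in U$. To see $p \neq 0$: if $p = 0$ then $A = B = 0$, so all $\alpha_i$ and $\beta_i$ vanish because $p_0,\dots,p_3$ are linearly independent, contradicting that the syzygy is nonzero. With $p \neq 0$ and $R$ a domain, $pu$ and $pv$ are linearly independent over $k$, so $\spn\{pu,pv\}$ is a two-dimensional subspace of the four-dimensional space $U$. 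Extending it to a basis $\{pu,pv,q_2,q_3\}$ of $U$ and renaming $q_2,q_3$ as $p_2,p_3$ yields $I_U = \langle pu,pv,p_2,p_3\rangle$ with $p$ homogeneous of bidegree $(2,0)$, as claimed.

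There is no real obstacle here; the proof is a short computation. The only points deserving care are the observation that $A,B$ land in $U$ because the coefficients are scalars (this is what makes the change of generators possible), and the use of the nontriviality of the syzygy to guarantee $p \neq 0$. Note in particular that the basepoint free hypothesis is not needed for this statement, although it will be essential for the sharper resolution results in the rest of \S 3.
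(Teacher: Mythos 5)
Your proof is correct and follows essentially the same route as the paper: regroup the bidegree $(0,1)$ syzygy as $u\cdot(\sum\alpha_ip_i)+v\cdot(\sum\beta_ip_i)=0$, recognize this as a syzygy on $(u,v)$, and conclude it is a multiple of the Koszul syzygy (which you verify directly via the UFD property rather than citing that the syzygy module of the regular sequence $u,v$ is Koszul-generated). Your added checks that $p\neq 0$ and that $pu,pv$ extend to a generating set, and the remark that basepoint freeness is not needed, are fine but not a different method.
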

\begin{proof}
Rewrite the syzygy 
\[
\sum\limits_{i=0}^3 (a_iu+b_iv)p_i = 0 = u \cdot \sum\limits_{i=0}^3 a_ip_i + v\cdot \sum\limits_{i=0}^3 b_ip_i,
\]
and let $g_0 = \sum\limits_{i=0}^3 a_ip_i$, $g_1 =\sum\limits_{i=0}^3 b_ip_i$.
The relation above implies that $(g_0,g_1)$ is a syzygy on $(u,v)$.
Since the syzygy module of $(u,v)$ is generated by the Koszul syzygy,
this means
\[
\left[ \!
\begin{array}{c}
g_0\\
g_1
\end{array}\! \right] = p \cdot \left[ \!
\begin{array}{c}
-v\\
u
\end{array}\! \right]
\]
\end{proof}
A similar argument applies if $I_U$ has a first syzygy of
degree $(1,0)$. Lemma~\ref{LS1} has surprisingly strong consequences:
\begin{prop}\label{LS3}
If $U$ is basepoint free and $I_U$ has a unique linear first syzygy of bidegree $(0,1)$, then 
there is a complex of free $R$ modules 
\[
\mathcal{F}_1 \mbox{ : }0 \longrightarrow F_3 \stackrel{\phi_3}{\longrightarrow}  F_2 \stackrel{\phi_2}{\longrightarrow} F_1\stackrel{\phi_1}{\longrightarrow} I_U  \longrightarrow 0,
\]
where $\phi_1 = \left[ \!\begin{array}{cccc}
p_0 & p_1 & p_2 & p_3 \end{array}\! \right]$, with ranks and shifts matching Type 5 in Table 2. Explicit formulas
appear in the proof below. The differentials $\phi_i$ depend on whether
$p=L_1(s,t)L_2(s,t)$ of Lemma~\ref{LS1} has $L_1=L_2$.
\end{prop}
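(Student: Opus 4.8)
The plan is to build the complex $\mathcal{F}_1$ explicitly by exploiting the decomposition $I_U = \langle pu, pv, p_2, p_3\rangle$ from Lemma~\ref{LS1}. First I would set up the generating set $p_0 = pu$, $p_1 = pv$, together with $p_2, p_3 \in V$, and record the Koszul syzygy $v\cdot(pu) - u\cdot(pv) = 0$ as one column of $\phi_2$; this is the given unique linear first syzygy of bidegree $(0,1)$. The key structural observation is that $I_U$ is closely tied to the ideal $J = \langle p \rangle \cap \langle p_2, p_3 \rangle$, or more precisely to the colon ideal $(\langle p_2,p_3\rangle : p)$, since any syzygy $a\cdot pu + b\cdot pv + c\cdot p_2 + d\cdot p_3 = 0$ with $a,b$ not both divisible by things forces $(au+bv)p \in \langle p_2,p_3\rangle$. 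So the first real step is to analyze $\langle p_2, p_3\rangle$ as an ideal generated by two forms of bidegree $(2,1)$ whose images in $R/\langle p \rangle$ generate an ideal one needs to understand; basepoint-freeness of $U$ should force $p$ and the pair $p_2,p_3$ to interact generically enough that $\langle p_2,p_3\rangle$ together with $p$ behaves like a near-complete intersection.

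Next I would split into the two cases dictated by whether $p = L_1 L_2$ with $L_1 \ne L_2$ or $p = L^2$, as the proposition demands. In either case, after the linear change of coordinates in $s,t$ (absorbed into the statement of Lemma~\ref{LS1} via the hypothesis), we may take $p = st$ or $p = s^2$ respectively. Then $\{p_2, p_3\}$ spans a pencil in $V$; basepoint-freeness of $U = \spn\{pu, pv, p_2, p_3\}$ translates, via $\sqrt{I_U} = \langle s,t\rangle \cap \langle u,v\rangle$, into explicit nonvanishing conditions on the coefficients of $p_2, p_3$. With $p$ fixed, I would compute a minimal generating set for the syzygy module of $(pu, pv, p_2, p_3)$ directly: the Koszul-type syzygies among $\{pu, pv\}$ and among $\{p_2, p_3\}$ and the "mixed" syzygies coming from $(\langle p_2, p_3 \rangle : p)$. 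The ranks and bidegree shifts $(-2,-2), (-3,-2)^2, (-4,-1)^2$ for $F_2$ and $(-4,-2)^2$ for $F_3$ in Type 5 should emerge: the $(-2,-2)$ generator is the linear Koszul syzygy on $(u,v)$; the two $(-4,-1)$ generators are the degree-$(0,1)$-free syzygies expressing that $p\cdot(\text{something in }R_{0,1}) \in \langle p_2,p_3\rangle$ is impossible but $p_2 u p_3 - \cdots$ type relations in $s,t$ exist; and the $(-3,-2)^2$ generators come from writing $s\cdot p_2$ or $t\cdot p_2$ in terms of $pu, pv, p_3$. Checking $\phi_3 \circ \phi_2 = 0$ and minimality is then a finite computation in each case.

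The main obstacle, I expect, is verifying that the constructed complex $\mathcal{F}_1$ is genuinely a resolution — i.e. exactness at $F_2$ and $F_1$ — rather than merely a complex with the right numerics; the proposition as stated only claims a complex, so strictly I only need $\phi_i \phi_{i+1} = 0$ and the correct ranks/shifts, which is more tractable, but to make it useful (as the paper clearly intends, referencing it for Betti numbers) one wants exactness, deferred perhaps to Theorem~\ref{T5exact}. The secondary difficulty is bookkeeping: producing explicit matrices $\phi_2, \phi_3$ whose entries are the right linear and quadratic forms in $s,t,u,v$, uniformly across the coefficient choices allowed by basepoint-freeness, and confirming that no entry of $\phi_2$ is a nonzero constant (minimality) — this is where the case distinction $L_1 = L_2$ versus $L_1 \ne L_2$ genuinely bites, since the colon ideal $(\langle p_2,p_3\rangle : p)$ and hence the mixed syzygies change shape. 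I would handle this by a normal-form argument: use the residual $\mathrm{GL}_2 \times \mathrm{GL}_2$ action on $(s,t)$ and $(u,v)$ fixing $p$ to reduce $p_2, p_3$ to a short list of normal forms, compute $\mathcal{F}_1$ on each, and observe the answer is constant on the numerical level.
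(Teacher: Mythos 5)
Your proposal takes essentially the same approach as the paper: reduce to $I_U=\langle pu,pv,p_2,p_3\rangle$ via Lemma~\ref{LS1}, split on whether $p$ is a square, normalize $p$ to $s^2$ or $st$, reduce $p_2,p_3$ to a normal form, compute the syzygies explicitly, and defer exactness to Theorem~\ref{T5exact}. One detail is off: the two $(-3,-2)$ summands of $F_2$ correspond to syzygies of \emph{bidegree} $(1,1)$, which the paper produces from the determinant $q(u,v)=\det A(u,v)$ of the $2\times 2$ matrix of linear forms appearing in $p_2,p_3$ (e.g.\ $sl_4p_2-sl_2p_3=s^2tL_1L_2$), not from relations of the shape ``$s\cdot p_2$ in terms of $pu,pv,p_3$,'' which would have bidegree $(1,0)$ and hence shift $(-3,-1)$; your colon-ideal framing $(\langle p_2,p_3\rangle:p)$ is a reasonable way to organize those mixed syzygies once the bidegree target is corrected.
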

\begin{proof}
Since $I_U$ has a syzygy of bidegree $(0,1)$, by Lemma~\ref{LS1}, $I_U = \langle pu, pv, p_2,p_3 \rangle$.\newline
Case 1: Suppose $p = l(s,t)^2$, then 
after a change of coordinates, $p = s^2$, so 
$p_0 = s^2u$ and $p_1= s^2v$. Eliminating terms from $p_2$ and $p_3$,
we may assume 
\[
\begin{array}{ccc}
p_2 &=& stl_1(u,v)+t^2l_2(u,v)=t(sl_1(u,v)+tl_2(u,v))\\
p_3 &=& stl_3(u,v)+t^2l_4(u,v)=t(sl_3(u,v)+tl_4(u,v)).
\end{array}
\]
Let $l_i(u,v) = a_iu+b_iv$ and  define
\[
A(u,v) = \left[ \!
\begin{array}{cc}
l_1 & l_2\\
l_3 & l_4
\end{array}\! \right]. 
\]
Note that det $A(u,v) = q(u,v) \ne 0$. The rows cannot be dependent, 
since $U$ spans a four dimensional subspace. If the columns are 
dependent, then $\{p_2,p_3 \} = \{tl_1(s+kt), tl_2(s+kt)\}$,
yielding another syzygy of bidegree $(0,1)$, contradicting our hypothesis.
In the proof of Corollary~\ref{NoMix}, we show the hypothesis that
$U$ is basepoint free implies that $A(u,v)$ is a $1$-generic matrix,
which means that $A(u,v)$ cannot be made to have a zero entry using 
row and column operations. We obtain a first syzygy
of bidegree $(2,0)$ as follows:
\[
\begin{array}{ccl}
s^2p_2 &=&s^3tl_1 + s^2t^2l_2\\
      &=& (a_1st+a_2t^2)s^2u + (b_1st+b_2t^2)s^2v\\
      &=& (a_1st+a_2t^2)p_0 + (b_1st+b_2t^2)p_1\\
\end{array}
\]
A similar relation holds for $stp_3$, yielding two first syzygies
of bidegree $(2,0)$. We next consider first syzygies of bidegree $(1,1)$. There is an obvious syzygy on $p_2,p_3$ given by
\[
\begin{array}{ccccc}
(sl_1(u,v)+tl_2(u,v))p_3 & = & (sl_3(u,v)+tl_4(u,v))p_2 
\end{array}
\]
Since  $\DET A(s,t) = q(u,v) \ne 0$, from 
\[
\begin{array}{ccc}
t^2q &=& l_3p_2-l_1p_3\\
stq &=& l_4p_2-l_2p_3
\end{array}
\]
and the fact that $q(u,v) = L_1(u,v)L_2(u,v)$ with
$L_i(u,v) = \alpha_iu+\beta_iv$, so we obtain a pair of relations
of bidegree $(1,1)$:
\[
\begin{array}{ccccc}
sl_4p_2-sl_2p_3 &= & s^2tL_1L_2 &=& (\alpha_1tL_2)s^2u+(\beta_1tL_2)s^2v. 
\end{array}
\]
Case 2: $p = l(s,t) \cdot l'(s,t)$ with $l,l'$ independent
linear forms. Then after a change of coordinates, $p = st$, so 
$p_0 = stu$ and $p_1= stv$. Eliminating terms from $p_2$ and $p_3$,
we may assume 
\[
\begin{array}{ccc}
p_2 &=& s^2l_1(u,v)+t^2l_2(u,v)\\
p_3 &=& s^2l_3(u,v)+t^2l_4(u,v).
\end{array}
\]
Let $l_i(u,v) = a_iu+b_iv$. We obtain a first syzygy
of bidegree $(2,0)$ as follows:
\[
\begin{array}{ccc}
stp_2 &=&  \!\!\!\!\!\!\!\!\!\!\!\!\!\!\!\!\!\!\!\!\!\!\!\!\!\!\!\!\!\!\!\!\!\!\!\!\!\!\!\!\!\!\!\!\!\!\!\!\!\!\!\!\!\!\!\!s^3tl_1 + st^3l_2\\
      &=& \!\!\!\!\!\!\!\!\!\!\!\!\!\!\!\!\!\!\!\!\!\!\!\!\!\!\!\! \!\!\!\!\!\!\!\!\!\!\!\!\!\!s^2(stl_1) + t^2(stl_2)\\
      &=& (a_1s^2+a_2t^2)stu + (b_1s^2+b_2t^2)stv
\end{array}
\]
A similar relation holds for $stp_3$, yielding two first syzygies
of bidegree $(2,0)$. We next consider first syzygies of bidegree $(1,1)$.
Since $ q(u,v) \ne 0$, from
\[
\begin{array}{ccc}
t^2q &=& l_3p_2-l_1p_3\\
s^2q &=& l_4p_2-l_2p_3
\end{array}
\]
and the fact that $q(u,v) = L_1(u,v)L_2(u,v)$ with
$L_i(u,v) = \alpha_iu+\beta_iv$, we have relations
\[
\begin{array}{ccccc}
sl_3p_2-sl_1p_3 &= & st^2L_1L_2 &=& (\alpha_1tL_2)stu+(\beta_1tL_2)stv \\
tl_4p_2-tl_2p_3 &= & ts^2L_1L_2 &=& (\alpha_1sL_2)stu+(\beta_1sL_2)stv, 
\end{array}
\]
which yield a pair of first syzygies of bidegree $(1,1)$. 
Putting everything together, we now have candidates for 
the differential $\phi_2$ in both cases. Computations
exactly like those above yield similar candidates for
$\phi_3$ in the two cases. In Case 1, we have 
\[
\phi_2 = \left[ \!
\begin{array}{ccccc}
v & \alpha_1tL_2 &   0& a_1st+a_2t^2  & a_3st+a_4t^2  \\
-u  &\beta_1tL_2 & 0 & b_1st+b_2t^2 &  b_3st+b_4t^2  \\
0     & -sl_4 & sl_3+tl_4 & -s^2   &0 \\
0  & sl_2 & -sl_1-tl_2 &0  &  -s^2 
\end{array}\! \right], \mbox{ }
\phi_3 = \left[ \!
\begin{array}{cc}
\gamma & \delta \\
s & t\\
0 & s\\
-l_4 & -l_3\\
l_2 & l_1
\end{array}\! \right],
\]
where 
\[
\begin{array}{lll}
\delta &= & -\alpha_1\beta_2 t^2 + (a_1st+a_2t^2)b_3 - (a_3st+a_4t^2)b_1\\
\gamma & = &-\alpha_1\beta_2st + (a_1st+a_2t^2)b_4 -(a_3st+a_4t^2)b_2 
\end{array}
\]

For $I_U$ as in Case 2, let 
\[
\phi_2 = \left[ \!
\begin{array}{ccccc}
v & \alpha_1tL_2 &   \alpha_1sL_2 & a_1s^2+a_2t^2  & a_3s^2+a_4t^2  \\
-u  &\beta_1tL_2 & \beta_1sL_2 & b_1s^2+b_2t^2 &  b_3s^2+b_4t^2  \\
0     & -sl_3 & -tl_4 & -st   &0 \\
0  & sl_1 & tl_2 &0  &  -st 
\end{array}\! \right], \mbox{ }
\phi_3 = \left[ \!
\begin{array}{cc}
\gamma & \delta \\
0 & t\\
s & 0\\
-l_4 & -l_3\\
l_2 & l_1
\end{array}\! \right],
\]
where 
\[
\begin{array}{ccc}
\gamma & = &  (-\alpha_1\beta_2  + a_1b_4 - a_3b_2)s^2 + (a_2b_4 - a_4b_2)t^2\\
\delta & = &  (a_1b_3 -a_3b_1)s^2 + (-\alpha_1\beta_2+a_2b_3 -a_4b_1)t^2
\end{array}
\]
We have already shown that $\im(\phi_2) \subseteq \ker(\phi_1)$, 
and an easy check shows that $\im(\phi_3) \subseteq \ker(\phi_2)$, yielding 
a complex of free modules of numerical Type 5.
\end{proof}
\noindent To prove that the complex above is actually exact, we use the
following result of Buchsbaum and Eisenbud \cite{be}: 
a complex of free modules 
\[\mathcal{F}: \cdots \stackrel{\phi_{i+1}}{\longrightarrow}  F_i \stackrel{\phi_i}{\longrightarrow} F_{i-1}\stackrel{\phi_{i-1}}{\longrightarrow}\cdots F_1\stackrel{\phi_1}{\longrightarrow}F_0,
\]
is exact iff
\begin{enumerate}
\item $\rank(\phi_{i+1}) + \rank(\phi_i) = \rank(F_i)$.
\item $\depth(I_{\rank(\phi_i)}(\phi_i)) \ge i$.
\end{enumerate}
\begin{thm}\label{T5exact}
The complexes appearing in Proposition~\ref{LS3} are exact.
\end{thm}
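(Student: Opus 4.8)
The plan is to invoke the Buchsbaum--Eisenbud exactness criterion stated just above, applied to the complex $\mathcal{F}_1: 0 \to F_3 \xrightarrow{\phi_3} F_2 \xrightarrow{\phi_2} F_1 \xrightarrow{\phi_1} I_U \to 0$ viewed as a complex of free modules terminating in $R$ (equivalently, prepend the surjection $R \leftarrow I_U$ and check exactness at each of $F_1, F_2, F_3$). Since $\mathrm{pd}\, R/I_U \le 3$ will then follow, the complex $\mathcal{F}_1$ is a minimal free resolution; minimality is clear from the shifts, since all entries of all $\phi_i$ lie in the maximal bigraded ideal (each has positive total degree). So the whole statement reduces to verifying the two numerical conditions of \cite{be}.

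First I would record the expected ranks: $\rank \phi_1 = 1$, $\rank \phi_2 = 3$, $\rank \phi_3 = 2$, so that condition (1) reads $1 + 3 = \rank F_1 = 4$, $3 + 2 = \rank F_2 = 5$, $2 + 0 = \rank F_3 = 2$, all of which hold provided the stated ranks are achieved generically. Verifying $\rank \phi_3 = 2$ is immediate from the explicit $5\times 2$ matrices (e.g. the lower $2\times 2$ block $\left[\begin{smallmatrix} -l_4 & -l_3 \\ l_2 & l_1 \end{smallmatrix}\right]$ has determinant $\pm q(u,v) \ne 0$). For $\rank \phi_2 = 3$ one exhibits a nonvanishing $3\times 3$ minor; the submatrix on the last three rows and, say, columns $1,4,5$ essentially reduces to the $2\times 2$ block above times $s^2$ (Case 1) or $st$ (Case 2), which is again a nonzero multiple of $q$. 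This is routine and I would present it as a one-line minor computation in each case.

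The substantive step is condition (2): $\depth I_2(\phi_3) \ge 1$, $\depth I_3(\phi_2) \ge 2$, and $\depth I_1(\phi_1) \ge 1$. The last is immediate since $I_1(\phi_1) = I_U$ has height $\ge 1$ ($U$ basepoint free, so $\sqrt{I_U} = \langle s,t\rangle \cap \langle u,v\rangle$ has codimension $2$). For $I_2(\phi_3) \ge 1$: the $2\times2$ minors of $\phi_3$ include $q(u,v)$ and various others, and since $q \ne 0$ the ideal $I_2(\phi_3)$ is nonzero in the domain $R$, hence has depth $\ge 1$. The real work is $\depth I_3(\phi_2) \ge 2$, i.e. showing the ideal of $3\times 3$ minors of the $4 \times 5$ matrix $\phi_2$ has codimension at least $2$. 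I would argue by contradiction: if $\mathrm{ht}\, I_3(\phi_2) \le 1$, then (since $R$ is a UFD and $I_3(\phi_2)$ is generated by forms of positive degree) some irreducible form $h$ divides every $3\times 3$ minor. One then localizes at the height-one prime $(h)$ and uses that $\phi_2$ drops rank to $\le 2$ there; combined with the already-established $\im \phi_3 \subseteq \ker \phi_2$ and $\rank \phi_3 = 2$, this forces the localized complex to fail to be a resolution in a way that contradicts the basepoint-free hypothesis — concretely, it would produce an extra linear syzygy or a basepoint, contradicting uniqueness of the bidegree $(0,1)$ syzygy (this is exactly the mechanism already used in the proof of Proposition~\ref{LS3}, where dependence of the columns of $A(u,v)$ was ruled out, and it ties in with the $1$-genericity of $A(u,v)$ invoked there and proved in Corollary~\ref{NoMix}). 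I expect this codimension estimate for $I_3(\phi_2)$ to be the main obstacle, since it requires genuinely using basepoint-freeness and the $1$-generic structure of $A$, not just formal manipulation; the cleanest route is probably to identify a specific pair of $3\times 3$ minors (for instance one divisible only by a power of $s$ and another divisible only by a power of $t$ or by $q(u,v)$) whose common zero locus is forced to be empty or codimension $\ge 2$ by $1$-genericity, handling Case 1 and Case 2 separately with the explicit matrices given in Proposition~\ref{LS3}.

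Finally, once conditions (1) and (2) are checked in both cases, \cite{be} yields exactness of $\mathcal{F}_1$, completing the proof; I would also remark that exactness plus the computation of ranks confirms a posteriori that the Betti numbers are exactly those of Type 5, with no cancellation, since minimality was already observed.
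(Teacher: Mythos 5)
Your overall strategy — invoke the Buchsbaum--Eisenbud criterion, check the rank condition, then check the depth condition on the ideals of minors — is exactly the paper's approach, and the rank computations you sketch (via the block $\bigl[\begin{smallmatrix}-l_4 & -l_3\\ l_2 & l_1\end{smallmatrix}\bigr]$ with determinant $q(u,v)\neq 0$) are fine. However, you have misread the depth condition in the criterion, and this is not a cosmetic slip: it makes the hardest part of the proof disappear.

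With $F_0=R$ so that $\phi_i\colon F_i\to F_{i-1}$, condition~(2) of \cite{be} reads $\depth\bigl(I_{\rank\phi_i}(\phi_i)\bigr)\ge i$. For $i=3$ that is $\depth\bigl(I_2(\phi_3)\bigr)\ge 3$, not $\ge 1$ as you wrote. Your argument for it (``$q\neq 0$, so $I_2(\phi_3)$ is a nonzero ideal in a domain, hence depth $\ge 1$'') only gives the trivial bound and does not come close to establishing depth three. In fact this is where the paper does most of its work: one writes out $I_2(\phi_3)$ explicitly, reduces it (using that $l_2,l_4$ are independent, $s\nmid\gamma$, $t\nmid\delta$, all of which are consequences of basepoint-freeness) to an ideal such as $\langle su,sv,tu,tv,s^2,st^2,t^3,q(u,v)\rangle$ in Case~1, and then exhibits a length-three regular sequence like $\{s^2,t^3,q(u,v)\}$ inside it. Without this, the Buchsbaum--Eisenbud criterion simply does not apply, and exactness at $F_3$ (equivalently injectivity of $\phi_3$ together with $\ker\phi_2=\im\phi_3$) is not established. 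By contrast, your identification of $\depth\bigl(I_3(\phi_2)\bigr)\ge 2$ as requiring work is correct, though you only outline a contradiction/localization scheme where the paper produces concrete minors (e.g. $us^4,vs^4$ together with a carefully chosen minor $\lambda$ with $s\nmid\lambda$ and $u,v$ not both dividing $\lambda$) and verifies directly that two of them form a regular sequence; your sketch would need to be filled in with comparable explicitness to rule out a common irreducible factor of all $3\times 3$ minors. In summary: the framework is right, but you must correct the bound to $\depth\bigl(I_2(\phi_3)\bigr)\ge 3$ and then actually supply a regular sequence of length three in $I_2(\phi_3)$ — this is the substantive point, and it genuinely uses the structure constraints coming from basepoint-freeness and $1$-genericity rather than formal nonvanishing.
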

\begin{proof} 
Put $F_0=R$. An easy check shows that $\rank(\phi_{i+1}) + \rank(\phi_i) = \rank(F_i)$, so what remains is to show that $\depth(I_2(\phi_3)) \ge 3$ and $\depth(I_3(\phi_2)) \ge 2$. The fact that $s \nmid \gamma$ will be useful: to see
this, note that both Case 1 and Case 2, $s \mid \gamma$ iff $a_2b_4-b_2a_4 =0$,
which implies $l_2$ and $l_4$ differ only by a scalar, contradicting the
assumption that $U$ is basepoint free. \newline
Case 1: We have $us^4, vs^4 \in I_3(\phi_2)$. Consider the minor 
$$\lambda=t^2L_2(sl_1+tl_2)\left( (\alpha_1b_1-\beta_1a_1)s + (\alpha_1b_2-\beta_1a_2)t \right)$$ obtained from the submatrix
$$
\left[
\begin{array}{ccc}
 \alpha_1tL_2 &  0 & a_1st+a_2t^2    \\
\beta_1tL_2 &0 & b_1st+b_2t^2   \\
sl_2 & -sl_1-tl_2 &0  
\end{array}\! \right]
$$
Note that $s$ does not divide $\lambda$, for if $s | \lambda$ then  either $l_2=0$ or $\alpha_1b_2-\beta_1a_2=0$. But none of the $l_i$ can be zero because of the basepoint free assumption (see the proof of  Corollary \ref{NoMix}) and  if $\alpha_1b_2-\beta_1a_2=0$, then $L_1$ and $l_2$ are the same up to a scalar multiple. Hence, since $l_1l_4-l_2l_3=L_1L_2$, we obtain $l_1$ is equal to a scalar multiple of $l_2$ or  $l_3$, which again violates the basepoint free assumption. 
To conclude, note that  $u$ and $v$ can not divide $\lambda$ at the same time, therefore, $\lambda$ and one of the $ us^4$ and $vs^4$ form a regular sequence in $I_3(\phi_2)$, showing that depth of $I_3(\phi_2)$ is at least $2$.\\
To show that $\depth(I_2(\phi_3)) \ge 3$, note that 
\[
I_2(\phi_3) = \langle sl_2,sl_4,tl_4-sl_3,tl_2-sl_1,t\gamma-s\delta, 
s\gamma, s^2, q(u,v) \rangle
\]
Since $l_2, l_4$ are independent, 
$\langle sl_2,sl_4 \rangle = \langle su,sv \rangle$ and  using these we
can reduce $tl_4-sl_3,tl_2-sl_1$ to $tu, tv$. Since 
$s \nmid \gamma$, modulo $s^2$, $s\gamma$ reduces to $st^2$. 
Similarly, $t\gamma-s\delta$ reduces to $t^3$, so that in fact
\[
I_2(\phi_3)  = \langle su,sv,tu,tv,s^2,st^2,t^3,q(u,v) \rangle, 
\]
and $\{s^2, t^3, q(u,v)\}$ is a regular sequence of length three.\\
Case 2:  We have $us^2t^2, vs^2t^2 \in I_3(\phi_2)$. Consider the minor 
\[
\lambda=L_2(s^2l_1-t^2l_2)\left( (\alpha_1b_1-\beta_1a_1)s^2 + (\alpha_1b_2-\beta_1a_2)t^2 \right)
\]
arising from the submatrix
$$
\left[
\begin{array}{ccc}
 \alpha_1tL_2 &   \alpha_1sL_2 & a_1s^2+a_2t^2    \\
\beta_1tL_2 & \beta_1sL_2 & b_1s^2+b_2t^2   \\
sl_1 & tl_2 &0  
\end{array}\! \right]
$$
Note that $s$ and $t$ do not divide $\lambda$, for if $s | \lambda$ then  either $l_2=0$ or $\alpha_1b_2-\beta_1a_2=0$. But none of the $l_i$ can be zero because of the basepoint free assumption (see the proof of  Corollary \ref{NoMix}) and  if $\alpha_1b_2-\beta_1a_2=0$, then $L_1$ and $l_2$ are the same up to a scalar multiple. Hence, since $l_1l_4-l_2l_3=L_1L_2$, we obtain $l_1$ is equal to a scalar multiple of $l_2$ or  $l_3$, contradicting basepoint freeness. Furthermore, 
$u$ and $v$ cannot divide $\lambda$ at the same time, so $\lambda$ and one of the $ us^2t^2$ and $vs^2t^2$ form a regular sequence in $I_3(\phi_2)$.
To show that $\depth(I_2(\phi_3)) \ge 3$, note that 
\[
I_2(\phi_3) = \langle su,sv,tu,tv,st, t\gamma, s\delta, q(u,v) \rangle,
\]
where we have replaced $sl_i, tl_j$ as in Case 1. If $t \mid \delta$, 
then $a_1b_3-b_1a_3 = 0$, which would mean $l_1 = k l_3$ and contradict 
that $U$ is basepoint free. Since $s \nmid \gamma$ and $t \nmid \delta$, 
$\{t\gamma, s\delta, q(u,v) \}$ is regular unless $\delta$, $\gamma$ share a
common factor $\eta = (as+bt)$. Multiplying out and comparing coefficients
shows that this forces $\gamma$ and $\delta$ to agree up to scalar. 
Combining this with the fact that $t \nmid \delta$, $s \nmid \delta$, 
we find that $\delta = as^2+bst+ct^2$ with $a \ne 0 \ne c$. Reducing
$s \delta$ and $t\delta$ by $st$ then implies that $t^3,s^3 \in I_2(\phi_3)$.
\end{proof}

\begin{cor}\label{NoMix}
If $U$ is basepoint  free, then $I_U$ cannot have first syzygies of both bidegree $(0,1)$ and bidegree $(1,0)$.
\end{cor}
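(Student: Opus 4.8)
The plan is a proof by contradiction resting entirely on bidegree bookkeeping; notice that uniqueness of the $(0,1)$ syzygy is never used, so the same argument also covers the two-syzygy (Type $6$) situation. Assume $U$ is basepoint free and that $I_U$ has first syzygies of \emph{both} bidegree $(0,1)$ and bidegree $(1,0)$. By Lemma~\ref{LS1} the $(0,1)$ syzygy gives $I_U=\langle pu,pv,p_2,p_3\rangle$ with $0\ne p\in R_{2,0}$; since $p$ is a binary quadratic, a change of coordinates in $s,t$ (which preserves bidegrees and basepoint freeness) reduces to the two cases in the proof of Proposition~\ref{LS3}: Case $1$, $p=s^2$ with $p_2=t(sl_1+tl_2)$, $p_3=t(sl_3+tl_4)$; Case $2$, $p=st$ with $p_2=s^2l_1+t^2l_2$, $p_3=s^2l_3+t^2l_4$, where $l_i\in R_{0,1}$ and the $s^2$- (resp.\ $st$-) components of $p_2,p_3$ have already been cleared against $pu,pv$. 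Dually, writing the $(1,0)$ syzygy as $sg_0+tg_1=0$ with $g_0,g_1\in I_U$, the pair $(g_0,g_1)$ is a syzygy on the regular sequence $s,t$, hence $(g_0,g_1)=q\cdot(t,-s)$ with $q\in R_{1,1}$ by degree reasons; moreover $q\ne 0$, since $q=0$ would make $g_0=g_1=0$ a vanishing $k$-linear combination of the independent $p_0,\dots,p_3$. So there is $0\ne q\in R_{1,1}$ with $qs,qt\in I_U$, and the goal is to force $q=0$.

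Next I would extract exactly what basepoint freeness buys. Evaluating the four generators over the coordinate points of the first $\P^1$: in Case $1$ the absence of a base point over $[0:1]$ forces $l_2,l_4$ to be linearly independent in $R_{0,1}$; in Case $2$ the absence of base points over $[1:0]$ and over $[0:1]$ forces $l_1,l_3$ independent and $l_2,l_4$ independent. (These are precisely the facts quoted from this proof in the proof of Theorem~\ref{T5exact}.) Since both $qs$ and $qt$ have bidegree $(2,1)$, they lie in $(I_U)_{2,1}=\spn\{pu,pv,p_2,p_3\}$, and the computation uses only the decomposition $R_{2,1}=s^2R_{0,1}\oplus stR_{0,1}\oplus t^2R_{0,1}$.

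In Case $1$: writing $qt=a_0s^2u+a_1s^2v+a_2p_2+a_3p_3$ and comparing $s^2$-components (the left side has none, $p_2,p_3$ have none) gives $a_0=a_1=0$, so $qt=t\bigl(a_2(sl_1+tl_2)+a_3(sl_3+tl_4)\bigr)$; cancelling $t$ gives $q=a_2(sl_1+tl_2)+a_3(sl_3+tl_4)$. Substituting this into $qs=b_0s^2u+b_1s^2v+b_2p_2+b_3p_3$ and comparing $t^2$-components forces $b_2l_2+b_3l_4=0$, hence $b_2=b_3=0$ by independence of $l_2,l_4$; then comparing $st$-components forces $a_2l_2+a_3l_4=0$, hence $a_2=a_3=0$, i.e.\ $q=0$. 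In Case $2$: writing $qs=a_0stu+a_1stv+a_2p_2+a_3p_3$ and comparing $t^2$-components forces $a_2l_2+a_3l_4=0$, so $a_2=a_3=0$ by independence of $l_2,l_4$; thus $qs=st(a_0u+a_1v)$ and $q=t(a_0u+a_1v)$, so $qt=t^2(a_0u+a_1v)\in\spn\{pu,pv,p_2,p_3\}$; killing the $s^2$-component (independence of $l_1,l_3$) puts $qt$ in $\spn\{stu,stv\}$, and an element divisible by $t^2$ can lie there only if it is $0$, so again $q=0$. Either way this contradicts $q\ne 0$, so $I_U$ has no first syzygy of bidegree $(1,0)$.

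The argument is essentially mechanical; the only point demanding care is the final step of Case $2$, where the constraints coming from $qs$ and from $qt$ must be used in the right order: use $qs\in(I_U)_{2,1}$ first to pin $q$ down as a multiple of $t$, and only then invoke $qt\in(I_U)_{2,1}$, so that the clash between ``divisible by $t^2$'' and ``supported on $stu,stv$'' closes the argument.
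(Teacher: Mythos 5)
Your proof is correct and takes essentially the same route as the paper's: both normalize $I_U$ via Lemma~\ref{LS1} into the two cases of Proposition~\ref{LS3}, extract from basepoint freeness the independence of $l_1,l_3$ and of $l_2,l_4$, and kill the hypothetical $(1,0)$ syzygy by bidegree bookkeeping. The only difference is cosmetic: you repackage the $(1,0)$ syzygy as $qs, qt \in (I_U)_{2,1}$ (the Koszul dualization also used in Lemma~\ref{LS2-10}) and test membership componentwise, whereas the paper expands $\sum (a_is+b_it)p_i=0$ directly and reads off the $s^3$ and $t^3$ coefficients -- the underlying computation is identical.
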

\begin{proof}
Suppose there is a first syzygy of bidegree $(0,1)$ and  proceed
as in the proof of Proposition~\ref{LS3}. In the setting of 
Case 1, 
\[
I_U =\langle  stu, stv, s^2l_1(u,v)+t^2l_2(u,v), s^2l_3(u,v)+t^2l_4(u,v)\rangle .
\]
If there is also a linear syzygy of bidegree $(1,0)$, expanding out
$\sum (a_is+b_it)p_i$ shows that the coefficient of $s^3$ is $a_3l_1+a_4l_3$,
and the coefficient of $t^3$ is $b_3l_2+b_4l_4$. Since 
$\sum (a_is+b_it)p_i=0$, both coefficients must vanish.
In the proof of Proposition~\ref{LS3} we showed $\DET A(u,v) = q(u,v) \ne 0$.
In fact, more is true: if any of the $l_i$ is zero, then $U$ is not basepoint 
free. For example, if $l_1 =0$, then $\langle t,l_3\rangle$ is a minimal 
associated prime of $I_U$. Since $a_3l_1+a_4l_3 =0$ iff $a_3=a_4=0$ or 
$l_1$ is a scalar multiple of $l_3$ and  the latter situation implies
that $U$ is not basepoint  free, we must have $a_3=a_4=0$. Reasoning 
similarly for $b_3l_2+b_4l_4$ shows that $a_3=a_4=b_3=b_4=0$. This implies
the linear syzygy of bidegree $(1,0)$ can only involve $stu,stv$, which 
is impossible. This proves the result in Case 1 and  
similar reasoning works for Case 2.
\end{proof}

\begin{cor}\label{T5PD}
If $I_U$ has a unique linear first syzygy of bidegree $(0,1)$, then $I_U$
has either one or two embedded prime ideals of the form $\langle s,t, L_i(u,v) \rangle$. If $q(u,v) = \DET~A(u,v) = L_1(u,v)L_2(u,v)$ for $A(u,v)$ as in Theorem~\ref{T5exact},
then:
\begin{enumerate}
\item If $L_1=L_2$, then the only embedded prime of $I_U$ is 
$\langle s,t,L_1\rangle$.
\item If $L_1 \ne L_2$, then $I_U$ has two embedded primes
$\langle s,t,L_1\rangle$ and $\langle s,t,L_2\rangle$.
\end{enumerate}
\end{cor}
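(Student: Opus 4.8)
The plan is to compute the embedded primes of $I_U$ directly from the explicit minimal free resolution of Type 5 produced in Proposition~\ref{LS3} and shown exact in Theorem~\ref{T5exact}. By Proposition~\ref{PD2} the only candidates for embedded primes are $\mm=\langle s,t,u,v\rangle$ and the primes $P=\langle s,t,L(u,v)\rangle$ for $L$ a linear form of bidegree $(0,1)$, so it suffices to decide which of these actually occur. The natural tool is the connection between associated primes and $\mathrm{Ext}$ modules (reference \cite{ehv} in the toolbox list): a prime $P$ of codimension $c$ is associated to $R/I_U$ precisely when $\mathrm{Ext}^c_R(R/I_U,R)_P\neq 0$, and these Ext modules can be read off from the dual of the free resolution $\mathcal{F}_1$. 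Since $I_U$ has projective dimension $3$ and the primes in question have codimension $3$, I would focus on $\mathrm{Ext}^3_R(R/I_U,R)=\coker(\phi_3^{\vee})$, a module supported exactly on the embedded primes of codimension $3$.

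First I would reduce to a single case by changing coordinates as in Lemma~\ref{LS1}, so $p=s^2$ (Case 1, $L_1=L_2$ up to scalar after a further change) or $p=st$ (Case 2), using the matrices $\phi_2,\phi_3$ written out explicitly in the proof of Proposition~\ref{LS3}. Then I would localize at a candidate prime $P=\langle s,t,L\rangle$ and ask whether the cokernel of the transpose map $\phi_3^{\vee}:F_2^{\vee}\to F_3^{\vee}$ is nonzero after localization. Concretely this amounts to checking whether the $2\times 2$ entries of $\phi_3$ — equivalently the ideal $I_2(\phi_3)$ already computed in the proof of Theorem~\ref{T5exact} — generate the unit ideal in $R_P$. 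In Case 1 we found $I_2(\phi_3)=\langle su,sv,tu,tv,s^2,st^2,t^3,q(u,v)\rangle$; localizing at $\langle s,t,L\rangle$, the generators $su,sv,tu,tv$ lie in $\langle s,t\rangle^2\subseteq P^2$ and $s^2,st^2,t^3$ also lie in $P$, so the question reduces to whether $q(u,v)=L_1L_2$ is a unit at $P=\langle s,t,L\rangle$: it is a unit iff $L\nmid q$. Thus $P$ is embedded iff $L\in\{L_1,L_2\}$, which gives exactly the dichotomy claimed: one embedded prime $\langle s,t,L_1\rangle$ if $L_1=L_2$, and two embedded primes $\langle s,t,L_1\rangle,\langle s,t,L_2\rangle$ if $L_1\neq L_2$. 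The analogous computation in Case 2 uses $I_2(\phi_3)=\langle su,sv,tu,tv,st,t\gamma,s\delta,q(u,v)\rangle$, and the same localization argument shows the support of the Ext module is $\V(q(u,v))\cap\V(s,t)$.

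The one point needing a little care is ruling out $\mm=\langle s,t,u,v\rangle$ as an embedded prime. I would handle this by noting that the resolution has length $3$, so $\depth R/I_U \geq 1$ by the Auslander–Buchsbaum formula, hence $\mm\notin\Ass(R/I_U)$; alternatively one checks directly that $q(u,v)\in I_2(\phi_3)$ is a nonzerodivisor modulo $\langle s,t\rangle$, so that $I_2(\phi_3)$ is not $\mm$-primary after localizing at $\mm$. I would also have to confirm that the scheme-theoretic support of $\coker(\phi_3^{\vee})$ is genuinely $1$-dimensional and reduced along $\V(s,t,L_i)$ — that is, that each $\langle s,t,L_i\rangle$ really does occur with the right multiplicity and not merely contains an associated prime — but this is immediate once we see the annihilator of the Ext module localizes to $\langle s,t,L_i\rangle R_{P_i}$ exactly.

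The main obstacle I anticipate is bookkeeping rather than conceptual: keeping the two cases $p=s^2$ and $p=st$ straight, and correctly identifying the linear forms $L_1,L_2$ as the roots of $q(u,v)=\DET A(u,v)$ in terms of the original $l_i$'s, so that the statement matches the normalization in Theorem~\ref{T5exact}. Once the ideal $I_2(\phi_3)$ is in the simplified form already derived in the proof of Theorem~\ref{T5exact}, the localization arguments are short. Finally I would remark that combining this with the primary decomposition data gives Corollary~\ref{T5PD} as stated, distinguishing Types 5a and 5b in Table~1.
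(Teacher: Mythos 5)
Your proposal follows the paper's proof essentially verbatim: apply the Eisenbud--Huneke--Vasconcelos criterion to reduce to associated primes of $\mathrm{Ext}^3_R(R/I_U,R)=\coker(\phi_3^{\vee})$, whose codimension-three support is controlled by $I_2(\phi_3)$, and then read off the answer from the generators of $I_2(\phi_3)$ already computed in the proof of Theorem~\ref{T5exact}; you phrase the last step as a localization at each candidate $\langle s,t,L\rangle$ where the paper writes down the primary decomposition, but this is the same computation. Two small caveats worth fixing: citing Proposition~\ref{PD2} for the candidate list is circular (that proposition's proof invokes the present corollary), though harmlessly so since the generators $s^2,t^3,q(u,v)\in I_2(\phi_3)$ already rule out $\mm$ and any $\langle u,v,l(s,t)\rangle$ directly; and the parenthetical identifying ``Case 1'' with ``$L_1=L_2$'' conflates two independent dichotomies --- Cases 1 and 2 of Proposition~\ref{LS3} concern whether $p(s,t)$ is a perfect square, while the $L_i$ in the statement are the factors of $q(u,v)=\det A(u,v)$ --- although your subsequent computation does handle both cases and lands on the correct dichotomy.
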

\begin{proof}
In \cite{ehv}, Eisenbud, Huneke and  Vasconcelos show that 
a prime $P$ of codimension $c$ is associated to $R/I$ iff 
it is associated to $Ext^c(R/I,R)$. If $I_U$ has a unique 
linear syzygy, then the free resolution is given by Proposition~\ref{LS3},
and $Ext^3(R/I_U,R) = \coker(\phi_3^t)$. By Proposition 20.6 of
\cite{ebig}, if $\phi$ is a presentation matrix for a module $M$,
then the radicals of $ann(M)$ and $I_{\rank(\phi)}(\phi)$ are equal.
Thus, if $I_U$ has a Type 5 resolution, the codimension three
associated primes are the codimension three associated primes 
of $I_2(\phi_3)$. The proof of Theorem~\ref{T5exact} shows that
in Case 1,
\[
\begin{array}{cccc}
I_2(\phi_3) & = & \!\!\!\!\!\!\!\!\!\!\!\!\!\!\! \langle su,sv,tu,tv,s^2,st^2,t^3,q(u,v) \rangle &\\
           & =  & \!\!\!\!\!\!\!\!\!\!\!\!\!\!\!\!\!\!\!\!\!\!\!\!\!\!\!\!\! \langle s^2,st^2,t^3,u,v\rangle \cap \langle s,t,L_1^2 \rangle &\mbox{ if } L_1=L_2 \\
           & =  & \langle s^2,st^2,t^3,u,v\rangle \cap \langle s,t,L_1 \rangle 
\cap \langle s,t,L_2 \rangle &\mbox{ if } L_1\ne L_2 .\\
\end{array}
\]
The embedded prime associated to $\langle s,t,u,v \rangle$ is not
an issue, since we are only interested in the codimension three associated
primes. The proof for Case 2 works in the same way. 
\end{proof}
Next, we tackle the case where the syzygy of bidegree $(0,1)$ is 
not unique.
\begin{prop}\label{LS2}
If $U$ is basepoint free, then the following are equivalent
\begin{enumerate}
\item The ideal $I_U$ has two linear first syzygies of bidegree $(0,1)$.
\item The primary decomposition of $I_U$ is
\[
I_U = \langle u,v \rangle \cap  \langle q_1,q_2 \rangle,
\]
where $\sqrt{\langle q_1,q_2 \rangle} = \langle s,t \rangle$
and $q_i$ are of bidegree $(2,0)$. 
\item The minimal free resolution of $I_U$ is of numerical Type 6. 
\item $X_U \simeq \Sigma_{1,1}$.
\end{enumerate}
\end{prop}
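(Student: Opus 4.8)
The plan is to prove the four conditions equivalent by establishing $(1)\Leftrightarrow(2)$, then $(2)\Rightarrow(3)\Rightarrow(1)$, and finally $(2)\Leftrightarrow(4)$. For $(1)\Rightarrow(2)$ I would argue as in the proof of Proposition~\ref{LinSyzGeom}$(2)$: by Lemma~\ref{LS1} a pair of independent first syzygies of bidegree $(0,1)$ produces two linearly independent forms $q_1,q_2\in R_{2,0}$ with $q_iu,q_iv\in I_U$. Since $R_{2,1}=R_{2,0}\otimes_k R_{0,1}$ as a $k$-vector space, the four products $q_1u,q_1v,q_2u,q_2v$ are linearly independent, hence span the $4$-dimensional space $U$, so $I_U=\langle q_1u,q_1v,q_2u,q_2v\rangle=\langle q_1,q_2\rangle\cdot\langle u,v\rangle$. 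Because $q_1,q_2\in k[s,t]$, $u,v\in k[u,v]$ and $R=k[s,t]\otimes_k k[u,v]$, one has $\langle q_1,q_2\rangle\cap\langle u,v\rangle=\langle q_1,q_2\rangle\cdot\langle u,v\rangle$, so $I_U=\langle q_1,q_2\rangle\cap\langle u,v\rangle$; and if $q_1,q_2$ shared a linear factor $\ell(s,t)$ then $\langle\ell\rangle$ would be a codimension one component of $\sqrt{I_U}$, contradicting basepoint freeness, so $\sqrt{\langle q_1,q_2\rangle}=\langle s,t\rangle$ and $\langle q_1,q_2\rangle$ is $\langle s,t\rangle$-primary. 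For $(2)\Rightarrow(1)$ the same identities give $I_U=\langle q_1u,q_1v,q_2u,q_2v\rangle$ with $q_1,q_2$ independent, and $v(q_1u)-u(q_1v)=0$, $v(q_2u)-u(q_2v)=0$ are two independent minimal first syzygies of bidegree $(0,1)$.

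For $(2)\Rightarrow(3)$ I would write down the candidate resolution explicitly. With $\phi_1=[\,q_1u\ \ q_1v\ \ q_2u\ \ q_2v\,]$, let $\phi_2$ have columns $(v,-u,0,0)$, $(0,0,v,-u)$, $(q_2,0,-q_1,0)$, $(0,q_2,0,-q_1)$ and let $\phi_3=(-q_2,q_1,v,-u)^{t}$, so that
\[
0\longrightarrow R(-4,-2)\stackrel{\phi_3}{\longrightarrow}R(-2,-2)^2\oplus R(-4,-1)^2\stackrel{\phi_2}{\longrightarrow}R(-2,-1)^4\stackrel{\phi_1}{\longrightarrow}I_U\longrightarrow 0
\]
is a complex of numerical Type~6. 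It is visibly minimal, since all entries lie in $\mm$, so it suffices to prove exactness, which I would do with the Buchsbaum--Eisenbud criterion exactly as in Theorem~\ref{T5exact}: the rank equalities $\rank\phi_3+\rank\phi_2=4=\rank\phi_2+\rank\phi_1$ are immediate, $I_1(\phi_3)=\langle q_1,q_2,u,v\rangle$ is generated by a regular sequence because $k[s,t]/\langle q_1,q_2\rangle$ is Artinian (so $\depth\geq 3$), and $I_3(\phi_2)$ contains the length two regular sequence $u^2q_1,\,v^2q_2$ (so $\depth\geq 2$). For $(3)\Rightarrow(1)$ I would simply read off Table~2: in a Type~6 resolution the summand $R(-2,-2)^2$ of $F_1$ records exactly two minimal first syzygies of bidegree $(0,1)$.

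Finally, for $(2)\Rightarrow(4)$: from $I_U=\langle q_1,q_2\rangle\cap\langle u,v\rangle$ the map $\phi_U$ factors as $\PP\stackrel{f}{\longrightarrow}\Pone\times\Pone\stackrel{\sigma_{1,1}}{\longrightarrow}\P^3$ with $f=\big((q_1:q_2),(u:v)\big)$ and $\sigma_{1,1}$ the Segre embedding; the first coordinate of $f$ is a degree two cover of $\Pone$ (as $q_1,q_2$ are coprime) and the second is the projection, so $f$ is surjective and $X_U=\sigma_{1,1}(\Pone\times\Pone)=\Sigma_{1,1}$. For $(4)\Rightarrow(2)$, since $X_U\simeq\Sigma_{1,1}$ is a smooth quadric, projectively the Segre image of $\Pone\times\Pone$, the map $\phi_U$ factors through $\sigma_{1,1}$ via a morphism $f\colon\PP\to\Pone\times\Pone$; pulling back the two rulings of the target gives basepoint free pencils on $\PP$ of bidegrees $(a_1,b_1),(a_2,b_2)$ with $a_i,b_i\geq 0$ and $(a_1,b_1)+(a_2,b_2)=(2,1)$ (the hyperplane class on $\Sigma_{1,1}$ pulls back to $\OPP(2,1)$). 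Bidegree $(0,0)$ is impossible since a pencil is two dimensional, and a basepoint free pencil of bidegree $(1,1)$ cannot occur on $\PP$: two distinct $(1,1)$ forms with no common component meet in two points by B\'ezout on $\Pone\times\Pone$, while a $(1,1)$ pencil with a common component forces a codimension one base locus for $\phi_U$. Hence $\{(a_1,b_1),(a_2,b_2)\}=\{(2,0),(0,1)\}$: the $(0,1)$ pencil is $\langle u,v\rangle$, the $(2,0)$ pencil is $\langle q_1,q_2\rangle$ with $q_1,q_2$ coprime binary quadratics, and $I_U$ is generated by their products, i.e.\ $I_U=\langle q_1,q_2\rangle\cap\langle u,v\rangle$.

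I expect the main obstacle to be the implication $(4)\Rightarrow(2)$: one must control \emph{all} factorizations of $\phi_U$ through a smooth quadric, and the key non-formal input is the B\'ezout count on $\Pone\times\Pone$ that excludes a basepoint free pencil of bidegree $(1,1)$. By contrast, the resolution computation in $(2)\Rightarrow(3)$ is routine once the differentials above are written down, the only mild point being the depth estimate for $I_3(\phi_2)$, and $(1)\Leftrightarrow(2)$ and $(3)\Rightarrow(1)$ are essentially bookkeeping given Lemma~\ref{LS1}, the separation-of-variables identity, and Table~2.
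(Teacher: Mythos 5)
Your proof is correct, and while it covers the same cycle of implications it departs from the paper's argument at two points worth noting.

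For the converse direction starting from $X_U\simeq\Sigma_{1,1}$, the paper writes the condition as $p_0p_3-p_1p_2=0$, deduces factorizations $p_0=\alpha\beta$, $p_1=\alpha\gamma$, $p_2=\beta\delta$ producing two linear syzygies, and then invokes Lemma~\ref{LS5-10} and Corollary~\ref{NoMix} to rule out the bidegree $(1,0)$ option --- a forward reference, since Lemma~\ref{LS5-10} appears in the \emph{next} section. Your argument instead factors $\phi_U$ through $\sigma_{1,1}$, examines the bidegrees $(a_i,b_i)$ of the two pulled-back pencils, and uses the self-intersection $\OPP(1,1)^2=2$ (plus the exclusion of $(0,0)$) to force $\{(a_1,b_1),(a_2,b_2)\}=\{(2,0),(0,1)\}$. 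This is a genuinely different and cleaner route: it is more geometric, it lands directly on the primary decomposition $(2)$ rather than on $(1)$, and it dispenses with the forward dependence on Lemma~\ref{LS5-10} and Corollary~\ref{NoMix} entirely. The small costs are (i) you implicitly use that ``$X_U\simeq\Sigma_{1,1}$'' means the identification is as a quadric in $\P^3$ so that $\phi_U$ genuinely factors through $\sigma_{1,1}$ after a linear change of coordinates --- which is how the paper uses the statement, so this is fine but should be said --- and (ii) the phrase ``forces a codimension one base locus for $\phi_U$'' should really refer to a base curve for the pulled-back pencil (contradicting that it is a morphism to $\Pone$), a minor slip of wording.

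Two other small differences. In $(1)\Rightarrow(2)$ the paper normalizes once via Lemma~\ref{LS1} to $I_U=\langle q_1u,q_1v,p_2,p_3\rangle$ and then shows the second $(0,1)$ syzygy lives only on $p_2,p_3$, whereas you read both $q_1,q_2$ off directly from the Koszul argument in the proof of Lemma~\ref{LS1}; this is essentially equivalent, but your justification that the two syzygies yield \emph{independent} $q_i$ (because the $p_i$ are linearly independent and $q\mapsto(qu,qv)$ is injective on $R_{2,0}$) is worth one sentence, since Lemma~\ref{LS1} as stated only handles a single syzygy. In $(2)\Rightarrow(3)$ the paper displays the differentials and asserts the Type~6 shape; your Buchsbaum--Eisenbud check (with the minors $u^2q_1$, $v^2q_2\in I_3(\phi_2)$, which are indeed $3\times 3$ minors, and the regular sequence $q_1,q_2,u,v$ of length four in $I_1(\phi_3)$) is a welcome piece of bookkeeping the paper leaves implicit.
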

\begin{proof}
By Lemma~\ref{LS1}, since $I_U$ has a linear syzygy of bidegree $(0,1)$,
\[
I_U = \langle q_1u, q_1v, p_2,p_3 \rangle.
\]
Proceed as in the proof of Proposition~\ref{LS3}. In 
Case 1, the assumption that $q_1=st$ means that $p_2,p_3$
can be reduced to have no terms involving $stu$ and $stv$, hence there
cannot be a syzygy of bidegree $(0,1)$ involving $p_i$ and
$q_1u,q_1v$. Therefore the second first syzygy of bidegree $(0,1)$ 
involves only $p_2$ and $p_3$ and  the reasoning in the 
proof of Lemma~\ref{LS1} implies that $\langle p_2,p_3 \rangle = \langle  q_2u, q_2v \rangle$. Thus, we have the primary decomposition
\[
I_U = \langle q_1,q_2 \rangle \cap \langle u, v \rangle,
\]
with $q_1,q_2$ of bidegree $(2,0)$. Since $U$ is basepoint  free,
$\sqrt{q_1,q_2} = \langle s,t \rangle$,
so $q_1$ and $q_2$ are a regular sequence in $k[s,t]$.
Similar reasoning applies in the situation of Case 2.
That the minimal free resolution is of numerical Type 6 
follows from the primary decomposition above,
which determines the differentials in 
the minimal free resolution:

\begin{small}
\[
I_U \longleftarrow (-2,-1)^4 \xleftarrow{\left[ \!
\begin{array}{cccc}
v & 0  &q_2  &0 \\
-u &0  &0 &  q_2 \\
0 & v  &-q_1  &0   \\
0 & -u &0  & -q_1
\end{array}\! \right]} 
\begin{array}{c}
(-2,-2)^2\\
\oplus \\
(-4,-1)^2
\end{array}\!
\xleftarrow{\left[ \!\begin{array}{c}
q_2 \\
-q_1\\
-v \\
u
\end{array}\! \right]}
 (-4,-2).
\]
\end{small}
The last assertion follows since 
\[
\DET \left[ \!
\begin{array}{cc}
uq_1 & uq_2\\
vq_1 & vq_2
\end{array}\! \right] = 0. 
\]
Hence, the image of $\phi_U$ is contained in $\V(xy-zw) = \Sigma_{1,1}$.
After a change of coordinates, $q_1=s^2+ast$ and $q_2 = t^2+bst$, with
$0 \ne a \ne b \ne 0$. Therefore on the open set 
$U_{s,u} \subseteq \PP$ the map is defined by
\[
(at+1,(at+1)v, t^2+bt, (t^2+bt)u), 
\]
so the image is a surface. Finally, if $X_U = \Sigma_{1,1}$, then 
with a suitable choice of basis for $U$, $p_0p_3-p_1p_2 = 0$, hence
$p_0 | p_1p_2$ and $p_3 | p_1p_2$. Since $U$ is four dimensional, 
this means we must have $p_0 = \alpha \beta$, $p_1 = \alpha \gamma$,
$p_2 = \beta \delta$. Without loss of generality, suppose $\beta$ is quadratic, so 
there is a linear first syzygy $\delta p_0-\alpha p_2 = 0$. 
Arguing similarly for $p_3$, we find that
there are two independent linear first syzygies. Lemma~\ref{LS5-10} of the
next section shows that if $U$ is basepoint free, then there can 
be at most one first syzygy of bidegree $(1,0)$, so by 
Corollary~\ref{NoMix}, $I_U$ must have two first syzygies of
bidegree $(0,1)$.
\end{proof}

\section{First syzygies of bidegree $(1,0)$}
\noindent Recall that there is an analogue of Lemma \ref{LS1} for syzygies of bidegree $(1,0)$:
\begin{lem}\label{LS2-10}
If $I_U$ has a linear syzygy of bidegree $(1,0)$, then 
\[
I_U = \langle ps, pt, p_2,p_3 \rangle,
\]
where $p$ is homogeneous of bidegree $(1,1)$.
\end{lem}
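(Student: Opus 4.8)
The plan is to mirror the proof of Lemma~\ref{LS1}, exchanging the roles of the pair $u,v$ for the pair $s,t$. Write the given bidegree $(1,0)$ first syzygy as
\[
\sum_{i=0}^{3}(a_i s + b_i t)\,p_i \;=\; s\cdot\Big(\textstyle\sum_{i=0}^3 a_i p_i\Big) + t\cdot\Big(\textstyle\sum_{i=0}^3 b_i p_i\Big) \;=\; 0,
\]
with $a_i,b_i\in k$ not all zero, and set $g_0=\sum a_i p_i$ and $g_1=\sum b_i p_i$, each of bidegree $(2,1)$ since the $p_i$ lie in $R_{2,1}$. The displayed relation says that $(g_0,g_1)$ is a syzygy on $(s,t)$; since $R$ is a domain, the syzygy module of $(s,t)$ is generated by the Koszul syzygy, so $(g_0,g_1)=p\cdot(-t,s)$ for a unique $p\in R$, and comparing bidegrees forces $p$ to have bidegree $(1,1)$. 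Hence $pt=-\sum a_i p_i$ and $ps=\sum b_i p_i$ both lie in $I_U$, and $p$ is homogeneous of bidegree $(1,1)$ as claimed.

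It then remains to see that $ps,pt$ together with two of the original generators already generate $I_U$. The vectors $(a_0,\dots,a_3)$ and $(b_0,\dots,b_3)$ in $k^4$ are linearly independent: if one of them vanished, the corresponding $g_j$ would vanish, forcing $p=0$ (as $R$ is a domain), and then the remaining relation $\sum a_i p_i=0$ or $\sum b_i p_i=0$ would force all coefficients to vanish because $p_0,\dots,p_3$ are linearly independent over $k$ (they span the four-dimensional space $U$); and if $(b_i)=\lambda(a_i)$, then $p(s+\lambda t)=0$ in the domain $R$, again forcing $p=0$. So the $4\times 2$ matrix with columns $(a_i),(b_i)$ has a nonsingular $2\times 2$ submatrix; after relabelling the generators we may assume $\det\left[\begin{array}{cc} a_0 & b_0\\ a_1 & b_1\end{array}\right]\neq 0$, and then $p_0$ and $p_1$ are $k$-linear combinations of $ps$, $pt$, $p_2$, $p_3$. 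Therefore $I_U=\langle p_0,p_1,p_2,p_3\rangle=\langle ps,pt,p_2,p_3\rangle$.

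There is essentially no obstacle here; the argument is formal and parallel to Lemma~\ref{LS1}. The only points requiring a small amount of care are the nonvanishing of $p$ and the linear independence of the coefficient vectors $(a_i)$ and $(b_i)$, both of which use only that $R$ is a domain and that $\dim_k U=4$, not the basepoint-free hypothesis. The identification of the syzygies of $(s,t)$ with the Koszul syzygy is standard.
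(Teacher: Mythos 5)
Your proof is correct and follows essentially the same route as the paper: the paper states this lemma as the direct analogue of Lemma~\ref{LS1}, whose proof is exactly your Koszul-syzygy argument with the roles of $s,t$ and $u,v$ exchanged. The additional verification that $ps,pt$ together with two of the original generators still generate $I_U$ is a detail the paper leaves implicit, and your treatment of it is fine.
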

 Lemma~\ref{LS2-10} has  strong consequences as well: we will prove that
\begin{prop}\label{LS3-10}
If $U$ is basepoint free and $I_U= \langle ps, pt, p_2,p_3 \rangle$, then 
\begin{enumerate}
\item $I_U$ has numerical Type 4 if and only if $p$ is decomposable.
\item $I_U$ has numerical Type 3 if and only if $p$ is indecomposable.
\end{enumerate}
\end{prop}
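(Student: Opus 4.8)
The plan is to adapt, almost verbatim, the method of Section~3 for syzygies of bidegree $(0,1)$. By Lemma~\ref{LS2-10} we may write $I_U = \langle ps, pt, p_2, p_3\rangle$ with $p$ of bidegree $(1,1)$, and a bidegree $(1,1)$ form corresponds to a $2\times 2$ matrix of scalars whose determinant vanishes exactly when the form is decomposable; this is invariant under the $GL_2\times GL_2$ action on the two sets of variables. So after a change of coordinates I would take $p = su$ in the decomposable case and $p = su+tv$ in the indecomposable case, then use $ps$ and $pt$ to eliminate monomials from $p_2$ and $p_3$, and column operations to normalize $p_2, p_3$ in the remaining monomials. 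Before anything else I would spell out what basepoint freeness buys: as in the proof of Corollary~\ref{NoMix}, non-vanishing of various coefficients and of an auxiliary determinant, plus $1$-genericity of the associated matrix -- every later step leans on these.

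With a normal form in hand, the next step is to write down the differentials explicitly in each case, exactly as in the proof of Proposition~\ref{LS3}: multiplying $p_2$ and $p_3$ by suitable forms in $s,t$ lands them in $\langle ps, pt\rangle$ and produces first syzygies of the bidegrees prescribed by Table~2, with the tautological syzygy $t\cdot ps - s\cdot pt = 0$ supplying the lowest one and Cramer-type relations against the auxiliary matrix supplying the rest. Assembling these gives $\phi_2$; repeating the bookkeeping one and two levels higher gives $\phi_3$ and $\phi_4$, and one arrives at a complex
\[
0 \longrightarrow F_4 \stackrel{\phi_4}{\longrightarrow} F_3 \stackrel{\phi_3}{\longrightarrow} F_2 \stackrel{\phi_2}{\longrightarrow} F_1 \stackrel{\phi_1}{\longrightarrow} I_U \longrightarrow 0,
\]
with $\phi_1 = [\,ps\ pt\ p_2\ p_3\,]$, whose free modules have the ranks and twists of Type~4 in the decomposable case and of Type~3 in the indecomposable case. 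Verifying $\phi_i\phi_{i+1}=0$ is routine if lengthy, and minimality is automatic: consecutive free modules in the relevant row of Table~2 share no common twist, so no entry of any $\phi_i$ is a unit.

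Exactness I would obtain from the Buchsbaum--Eisenbud criterion, as in Theorem~\ref{T5exact}, with $F_0 = R$. The rank equalities $\rank\phi_{i+1}+\rank\phi_i = \rank F_i$ are immediate from the twists. The work is in the depth conditions $\depth I_{\rank(\phi_i)}(\phi_i)\ge i$: for $i=1$ this is automatic since $R$ is a domain, and for $i=2,3$ it should go exactly as in Theorem~\ref{T5exact} -- exhibit explicit minors, clear unit factors, and use the recorded nondegeneracy to see that the resulting forms are pairwise non-associate and so form a short regular sequence. The estimate I expect to fight is $i=4$: $\depth I_{\rank(\phi_4)}(\phi_4)\ge 4$ is equivalent to $\sqrt{I_{\rank(\phi_4)}(\phi_4)}=\mm$, i.e.\ to the assertion that the minors of the last matrix generate an $\mm$-primary ideal; this is the algebraic shadow of the embedded prime $\mm$ listed for Types~3 and~4 in Table~1, and in Type~3, where $\phi_4$ has two columns, there are enough minors that controlling the reductions modulo the obvious ones is the delicate point. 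By Proposition~20.6 of \cite{ebig} this same computation pins down the codimension-three associated primes of $I_U$, exactly as in Corollary~\ref{T5PD}.

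Finally, the two case analyses give: $p$ decomposable implies $I_U$ of Type~4, and $p$ indecomposable implies $I_U$ of Type~3. As these hypotheses are mutually exclusive and exhaustive and Types~3 and~4 have distinct bigraded Betti numbers, the converses follow formally, yielding both biconditionals; one may further note that, since $I_U$ has at most one first syzygy of bidegree $(1,0)$ (Lemma~\ref{LS5-10}), the form $p$ is determined up to scalar by $I_U$, so decomposability is genuinely a property of $I_U$.
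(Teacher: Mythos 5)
Your plan is a genuinely different route from the one the paper actually takes. The paper does \emph{not} imitate the Type~5 argument for bidegree~$(1,0)$ syzygies: Theorem~\ref{Type4MC} (decomposable $p$) and Theorem~\ref{T3res} (indecomposable $p$) build the Type~4 and Type~3 resolutions as mapping cones over a carefully chosen three-dimensional $W\subseteq U$, using explicit primary decompositions of $I_W$ and $I_W:p_3$. Indeed, the paper explicitly records that the obvious choice of $W$ (with $\P(W)\cap\Sigma_{2,1}$ a conic) gives a mapping cone with overlapping shifts and many cancellations, and then selects a different $W$ (two points on $L$, one on $C$) so that the mapping cone is already minimal. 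The mapping-cone route has a concrete payoff: exactness comes for free from the short exact sequence $0\to R(-2,-1)/(I_W:p_3)\to R/I_W\to R/I_U\to 0$, so there is nothing to check at depth~$3$ or~$4$. For Type~3, the paper even needs a \emph{second} mapping cone (or a direct coordinate computation replacing it) to resolve $I_W:p_2$, and this step rests on a sequence of explicit coordinate normalizations that you would need to reproduce regardless of which endgame you use.

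Your alternative --- write out $\phi_2,\phi_3,\phi_4$ explicitly and invoke Buchsbaum--Eisenbud --- is plausible in principle (your framing of minimality via disjoint twists, the reduction of $\depth I_{\rank\phi_4}(\phi_4)\ge 4$ to $\mm$-primarity, and the "formal converse" step are all correct), but you have not carried it out where it matters. For Types~3 and~4 the resolution has length four; $\phi_2$ is $4\times7$ and $\phi_3$ is $7\times6$ in Type~3; and the paper's Type~5 argument (Theorem~\ref{T5exact}) already required nontrivial ad hoc algebra to identify the entries $\gamma,\delta$ of the much smaller $\phi_3$ and to control minors against the basepoint-free hypothesis. ``Repeating the bookkeeping one and two levels higher'' materially understates what would be involved, and the depth-$4$ check --- which you rightly flag as the delicate point --- is left as a hope. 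Until the matrices and depth verifications are actually produced, the argument has a genuine gap at exactly the place the paper's authors chose to sidestep via the mapping cone. Separately, note that your argument presupposes the numerical shape of the Types~3 and~4 rows of Table~2, but those Betti numbers are themselves \emph{outputs} of Theorems~\ref{Type4MC} and~\ref{T3res}; a self-contained Buchsbaum--Eisenbud proof would first have to determine those ranks and twists from scratch (e.g.\ by enumerating syzygies degree by degree as in Lemma~\ref{LS4-10}), which is additional work the paper handles via the mapping-cone construction.
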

We begin with some preliminary lemmas:
\begin{lem}\label{LS4-10}
If $I_U$ has a first syzygy of bidegree $(1,0)$, then 
$I_U$ has two minimal syzygies of bidegree $(1,1)$ and  
if $p$ in Lemma~\ref{LS2-10} factors, then $I_U$ also has a 
minimal first syzygy of bidegree $(0,2)$.
\end{lem}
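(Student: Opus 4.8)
By Lemma~\ref{LS2-10} I may assume $I_U = \langle ps, pt, p_2, p_3\rangle$ with $p$ of bidegree $(1,1)$; the obvious Koszul syzygy $t\cdot(ps) - s\cdot(pt) = 0$ already records one relation, but it is of bidegree $(1,0)$, so the substance of the claim is to exhibit \emph{two} independent relations of bidegree $(1,1)$ and, when $p$ factors, one of bidegree $(0,2)$. The plan is to mimic the explicit computations carried out in the proof of Proposition~\ref{LS3}: reduce $p$ to a normal form, strip redundant terms from $p_2, p_3$ using $ps, pt$, and then multiply $p_2, p_3$ by suitable bidegree $(0,1)$ forms to land back in $\langle ps, pt\rangle$.

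First I would normalize. By the coordinate-free description of $Q$ in \S 2 (or directly, since $p\in R_{1,1}$), either $p$ is decomposable, $p = L(s,t)\cdot M(u,v)$, in which case after a change of coordinates $p = su$; or $p$ is indecomposable, in which case (by the $1$-generic / Segre picture, $p$ not on $\Sigma_{1,1}$) after a change of coordinates $p = su + tv$. In each case, using that $ps, pt \in I_U$ I can eliminate from $p_2$ and $p_3$ every monomial divisible by $ps$ or $pt$; in the decomposable case $p=su$ this removes the $s^2u, stu$ terms, leaving $p_i$ supported on $\{t^2u, s^2v, stv, t^2v\}$ (bidegree $(2,1)$ monomials not in $(su)\cdot R_{1,0}$), and similarly in the indecomposable case $p = su+tv$ one reduces $p_2,p_3$ to a $2$-dimensional complement of $(su+tv)R_{1,0}$ inside $R_{2,1}$.

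Next, the syzygy-finding step. For a bidegree $(1,1)$ relation I look for forms $\mu_2, \mu_3 \in R_{0,1}$ and $\nu \in R_{0,1}$, $\ell \in R_{1,0}$ with $\mu_2 p_2 + \mu_3 p_3 = p\cdot(\nu s + \ell\cdot(\ldots))$; concretely, since $p_2, p_3$ after reduction each become $p$ times a bidegree $(1,0)$ form plus a "cross" term, one writes down $2\times 2$ determinant relations exactly as in the $t^2 q = l_3 p_2 - l_1 p_3$, $stq = l_4 p_2 - l_2 p_3$ computation of Proposition~\ref{LS3}, now with the roles of $(s,t)$ and $(u,v)$ partially swapped because $p$ has bidegree $(1,1)$. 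This produces two relations of bidegree $(1,1)$; independence follows because their "$p_2$-coefficients" are the two rows of a matrix whose determinant is the nonzero form playing the role of $q$ there (nonvanishing again forced by basepoint freeness, as in the argument that $\DET A \ne 0$). When $p = L(s,t)M(u,v)$ factors, there is the extra relation $M(u,v)\cdot(ps) = L(s,t)M(u,v)\cdot s \cdot$\,(unit in $u,v$)\,$\ldots$ — more precisely $M\cdot(p t)$ and $M\cdot(ps)$ each become divisible by the missing monomials, and combining the two $(1,1)$ relations with the factorization yields a relation supported purely in $u,v$ of bidegree $(0,2)$; the cleanest route is to observe that in the decomposable normal form $p = su$, one has $u\cdot p_i$ (for the appropriate $i$) expressible via $ps, pt$ and the other generators with $(0,2)$-coefficients, or equivalently that $M^2 \mid$ something forcing a $(0,2)$ Koszul-type syzygy on the pair $\{ps, pt\}$ after clearing $L$.

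The main obstacle I anticipate is \emph{minimality} and \emph{independence}: it is easy to write down relations of bidegree $(1,1)$ and $(0,2)$, but I must check they are not $R_+$-combinations of lower-degree syzygies (here the only lower one is the $(1,0)$ Koszul syzygy) and that the two $(1,1)$ relations are $R$-independent. This is where basepoint freeness does the real work — it is exactly what prevents $p_2, p_3$ from sharing a common linear factor with $p$ (which would collapse a $(1,1)$ relation into $p\cdot(\text{linear})\cdot(\text{Koszul})$) and what makes the relevant $2\times 2$ determinant nonzero, so the two relations stay independent and stay minimal. I would isolate this as the crux and verify it by the same "if a coefficient vanishes then two of the $l_i$ agree up to scalar, contradicting basepoint freeness" argument used repeatedly in \S 3.
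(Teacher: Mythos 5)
Your overall strategy matches the paper's: use Lemma~\ref{LS2-10} to put $I_U$ in the form $\langle ps, pt, p_2, p_3\rangle$, normalize $p$ (decomposable to $su$, indecomposable to a $1$-generic form), strip redundant terms from $p_2,p_3$ by reducing modulo $ps$ and $pt$, and then exhibit the relations explicitly. Two details in your middle paragraph would need repair before this is a proof. First, a bidegree slip: for a first syzygy of bidegree $(1,1)$ the coefficients on the $p_i$ must live in $R_{1,1}$, not $R_{0,1}$; as you have written it, $\mu_2 p_2 + \mu_3 p_3 = p\cdot(\nu s + \cdots)$ with $\mu_2,\mu_3,\nu\in R_{0,1}$ would produce a bidegree $(0,1)$ syzygy, which Corollary~\ref{NoMix} rules out. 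Second, the ``$2\times 2$ determinant'' device you are importing from Proposition~\ref{LS3} does not transfer directly to the bidegree $(1,1)$ relations here, because after the reduction $p_2,p_3$ no longer factor as a linear form times a $(1,1)$ form. What the paper actually does for the $(1,1)$ syzygies is multiply $p_2$ (and $p_3$) by $p$ itself and verify by direct expansion that $p\cdot p_i \in \langle ps, pt\rangle$ with $R_{1,1}$ coefficients --- a brute but clean calculation. The determinant-style elimination you describe is the right picture for the bidegree $(0,2)$ syzygy in the decomposable case, where one takes $u\,(l_2 p_2' - l_1 p_3')$ to cancel the $t^2$ terms and lands in $\langle ps, pt\rangle$ with $R_{0,2}$ coefficients. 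Finally, your discussion of minimality and independence is a welcome addition --- the paper's proof simply exhibits the relations and leaves minimality implicit --- and the argument you sketch (the only lower-degree first syzygy is the $(1,0)$ Koszul relation, which has zero $p_2,p_3$ coefficients, so it cannot generate relations with nonzero $p_2$ or $p_3$ entries, and the $2\times 2$ coefficient determinant is nonzero by basepoint freeness) is exactly the right one.
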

\begin{proof}
First assume $p$ is an irreducible bidegree $(1,1)$ form, then $p = a_0su+a_1sv+a_2tu+a_3tv$, with $a_0a_3-a_1a_2 \ne 0$. We may assume $a_0\ne 0$ and 
scale so it is one. Then 
\[
\begin{array}{ccc}
sp & = & s^2u+a_1s^2v+ a_2stu+a_3stv \\
tp & = & stu + a_1stv+a_2t^2u+a_3t^2v \\
p_2 & = & b_0t^2u+b_1s^2v+b_2stv+b_3t^2v\\
p_3 & = & c_0t^2u+c_1s^2v+c_2stv+c_3t^2v\\
\end{array}
\]
Here we have used $tp$ and $sp$ to remove all the terms involving
$s^2u$ and $stu$ from $p_2$ and $p_3$. A simple but tedious calculation
then shows that 
\[
\begin{array}{ccc}
p \cdot p_2 & = & sp(b_1sv+b_2tv)+ tp(b_0tu+b_3tv)\\
p \cdot p_3 & = & sp(c_1sv+c_2tv)+ tp(c_0tu+c_3tv)
\end{array}
\]

Now suppose that $p = L_1(s,t) \cdot L_2(u,v)$ with $L_1, L_2$ 
linear forms, then after a change of coordinates, $p = su$ and  a (possibly new) set of minimal generators for $I_U$ is  $\langle s^2u, stu, p_2,p_3 \rangle$.
Eliminating terms from $p_2$ and $p_3$, we may assume 
\[
\begin{array}{ccc}
p_2 &=& as^2v+bstv+t^2l_1\\
p_3 &=& cs^2v+dstv+t^2l_2,
\end{array}
\]
where $l_i =l_i(u,v) \in R_{(0,1)}$. There are two first syzygies of bidegree $(1,1)$:
\[
\begin{array}{ccc}
sup_2 &= & su(as^2v+bstv+t^2l_1)=as^3uv+bs^2tuv+st^2ul_1 \\
&=& (as+bt)v\cdot s^2u+tl_1\cdot stu =(as+bt)v\cdot p_0+tl_1\cdot p_1\\
\end{array}
\]

\[
\begin{array}{ccc}
sup_3 &= & su(cs^2v+dstv+t^2l_2)=cs^3uv+ds^2tuv+st^2ul_2 \\
&=& (cs+dt)v\cdot s^2u+tl_2\cdot stu =(cs+dt)v\cdot p_0+tl_2\cdot p_1\\
\end{array}
\]
A syzygy of bidegree $(0,2)$ is obtained via:
\[
\begin{array}{ccc}
u(l_2p_3-l_1p_2) &= & u(as^2vl_2+bstvl_2-cs^2vl_1-dstvl_1) \\
&=& (al_2-cl_1)v\cdot s^2u+(bl_2-dl_1)v\cdot stu \\
&=& (al_2-cl_1)v\cdot p_0+(bl_2-dl_1)v\cdot p_1 \\
\end{array}
\]
\end{proof}

\begin{lem}\label{LS5-10}
If $U$ is basepoint free, then there can be at most one 
linear syzygy of bidegree $(1,0)$.
\end{lem}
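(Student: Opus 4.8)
The plan is to reinterpret a linear first syzygy of bidegree $(1,0)$ as a nonzero form in $R_{1,1}$, and then to force a contradiction with basepoint-freeness by a base-locus argument on $\PP$.

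First I would set up a dictionary for linear $(1,0)$-syzygies. Such a syzygy is a relation $\sum_{i=0}^3(\alpha_i s+\beta_i t)p_i=0$ with $\alpha_i,\beta_i\in k$; setting $A=\sum_i\alpha_ip_i$ and $B=\sum_i\beta_ip_i$, it reads $sA+tB=0$, so $(A,B)$ is a syzygy on the regular sequence $(s,t)$ and hence is a multiple of the Koszul relation, $A=tq$, $B=-sq$ for a unique $q\in R_{1,1}$. Thus $A=tq$ and $B=-sq$ both lie in $U=\spn\{p_0,\dots,p_3\}$, i.e. $sq,tq\in U$; conversely such a $q$ produces a syzygy. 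Since $p_0,\dots,p_3$ are linearly independent over $k$, the form $q$ determines the scalars $\alpha_i,\beta_i$, and $q=0$ forces the syzygy to vanish; so $\sigma\mapsto q$ is a linear isomorphism from the space of linear $(1,0)$-syzygies onto $Q_U=\{q\in R_{1,1}\colon sq,tq\in U\}$ (this refines Lemma~\ref{LS2-10}, in which $p=q$). Proving the lemma therefore amounts to showing $\dim_k Q_U\le 1$.

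Suppose instead $\dim_k Q_U\ge 2$, and pick linearly independent $q_1,q_2\in Q_U$; in particular these are nonzero forms of bidegree $(1,1)$ with $sq_1,tq_1,sq_2,tq_2\in U\subseteq I_U$. Since $s$ and $t$ have no common zero on $\PP$, one has $\V(sq_i,tq_i)=\V(q_i)$, so the base locus of $\phi_U$ satisfies
\[
\V(I_U)\subseteq\V(sq_1,tq_1,sq_2,tq_2)=\V(q_1)\cap\V(q_2)\subseteq\PP.
\]
Now $\V(q_1)$ and $\V(q_2)$ are effective divisors of class $(1,1)$ on $\PP$, and $(1,1)\cdot(1,1)=2>0$ in the intersection pairing on $\Pic(\PP)$; effective divisors with positive intersection number cannot be disjoint, so $\V(q_1)\cap\V(q_2)\neq\emptyset$, whence $\V(I_U)\neq\emptyset$. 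This contradicts basepoint-freeness: the condition $\sqrt{I_U}=\langle s,t\rangle\cap\langle u,v\rangle$ forces $\V(I_U)=\emptyset$ on $\PP$, since each of $\langle s,t\rangle$ and $\langle u,v\rangle$ cuts out the empty set there. Hence $\dim_k Q_U\le 1$.

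The verifications that $\sigma\mapsto q$ is well-defined and injective and that $\V(sq_i,tq_i)=\V(q_i)$ are routine. The load-bearing step — and the reason the bound is "one" rather than "zero" — is the nonemptiness of $\V(q_1)\cap\V(q_2)$: a single $(1,1)$-curve $\V(q)$ can contain no point of the base locus (this is exactly the situation of Types 3 and 4, which admit one linear $(1,0)$-syzygy), so one genuinely needs two independent elements of $Q_U$, and the crux is that two $(1,1)$-curves on $\PP$ always meet. If one prefers to avoid the intersection-theoretic input, this last fact can instead be obtained from a factorization argument in the UFD $R$, splitting on whether the $q_i$ are irreducible and showing that a common zero is unavoidable in each case.
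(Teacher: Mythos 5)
Your reformulation of a linear $(1,0)$-syzygy as an element of $Q_U=\{q\in R_{1,1}: sq,tq\in U\}$ is correct and conceptually cleaner than the paper's argument, which carries out an explicit coefficient-by-coefficient reduction of a putative second syzygy against the first. However, as written the argument has a genuine gap at the crucial step: the chain
\[
\V(I_U)\subseteq\V(sq_1,tq_1,sq_2,tq_2)=\V(q_1)\cap\V(q_2)
\]
is a correct \emph{inclusion}, but it points the wrong way. From $\V(q_1)\cap\V(q_2)\neq\emptyset$ one cannot conclude $\V(I_U)\neq\emptyset$. What you need is the \emph{equality} $\V(I_U)=\V(q_1)\cap\V(q_2)$, which would follow if $I_U=\langle sq_1,tq_1,sq_2,tq_2\rangle$, i.e.\ if the four products $sq_1,tq_1,sq_2,tq_2$ span the four-dimensional $U$. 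That is not automatic. A nontrivial relation $s(aq_1+cq_2)+t(bq_1+dq_2)=0$, read against the Koszul syzygy on $(s,t)$, forces $\spn\{q_1,q_2\}=\spn\{sr,tr\}$ for some $r\in R_{0,1}$, i.e.\ forces $q_1,q_2$ to share a common $(0,1)$-factor $r$. In that case (say $q_1=sr$, $q_2=tr$) one has $tq_1=sq_2$, the four products only span the three-dimensional space $r\cdot R_{2,0}$, and the fourth generator $p_3$ of $I_U$ escapes your argument entirely.

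That degenerate case is genuinely present, and it is not pathological: after a change of coordinates it is exactly $\{s^2u,stu,t^2u\}\subseteq U$, i.e.\ $\P(U)$ contains a $\P^2$ fiber of $\Sigma_{2,1}$. This does force a basepoint, but by a different mechanism (Lemma~\ref{P2fiberBP}; or directly, $p_3$ restricted to the $(0,1)$-curve $\V(r)$ is a binary quadratic in $s,t$ and so vanishes somewhere on that curve, landing in $\V(I_U)$). So the lemma you want is true and your strategy is salvageable, but you must either (a) verify independence of $\{sq_1,tq_1,sq_2,tq_2\}$ and treat the common-factor case separately, or (b) argue that a zero of $I_U$ exists inside $\V(q_1)\cap\V(q_2)$ by analyzing the remaining generator there. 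With that fix your proof would be a genuine streamlining; the paper instead reduces the second syzygy by hand to the normal form $tp_2''+sp_3''=0$, from which it reads off $I_U=\langle s,t\rangle\cap\langle p,q\rangle$ before invoking the same fact that $\V(p,q)\neq\emptyset$ on $\PP$.
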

\begin{proof}
Suppose $I_U$ has a linear syzygy of bidegree $(1,0)$, so that 
$sp, tp \in I_U$, with $p=su+a_1sv+a_2tu+a_3tv$. Note this takes
care of both possible cases of Lemma~\ref{LS4-10}: in Case 1,
$a_3-a_1a_2 =0$ (e.g. for $p = su$, $a_1=a_2=a_3 =0$) and  in 
Case 2, $a_3-a_1a_2 \ne 0$. Now suppose another syzygy of 
bidegree $(1,0)$ exists: $S = \sum (d_is+e_it)p_i = 0$. Expanding shows that
\[
S = d_0s^3u + (e_0+d_1)s^2tu + \cdots
\]
So after reducing $S$ by the Koszul syzygy on $\langle sp,tp \rangle$, 
$d_0=d_1=e_0 = 0$.
In $p_2$ and $p_3$, one of $b_1$ or $c_1$ must be non-zero. If 
not, then all of $tp, p_2,p_3$ are divisible by $t$ and  since 
\[
sp\mid_{t=0} = s^2(u+a_1v),
\]
this would mean $\langle t, u+a_1v \rangle$ is an associated prime of
$I_U$, contradicting basepoint  freeness. WLOG $b_1 \ne 0$, scale it
to one and use it to remove the term $c_1s^2v$ from $p_3$. This means
the coefficient of $s^3v$ in $S$ is $d_2b_1 = d_2$, so $d_2$ vanishes.
At this point we have established that $S$ does not involve $sp$, 
and involves only $t$ on $tp, p_2$. Now change generators so that 
$p_2' = e_1pt+e_2p_2$. This modification does not affect $tp,sp$ 
and $I_U$, but now $S$ involves only $p'_2$ and $p_3$:
\[
(t)p'_2 + (d_3s+e_3t)p_3 = 0.
\]
As in the proof of Lemma~\ref{LS1}, letting $p_2''=p_2'+e_3p_3$ and
$p_3'' = d_3p_3$, we see that $S = tp_2''+ sp_3''=0$, so that
$p''_2 = sq$ and $p''_3 = -tq$, hence
\[
I_U = \langle s,t \rangle \cap \langle p, q \rangle
\]
with $p,q$ both of bidegree $(1,1)$. But on $\PP$, $\V(p,q)$
is always nonempty, which would mean $I_U$ has a basepoint,
contradicting our hypothesis.
\end{proof}
\begin{remark}
If the $\P^1$ fibers of $Q$ did not intersect, Lemma~\ref{LS5-10} would follow
easily from Lemma~\ref{LS2-10}. However, because $Q$ is a projection
of $\Sigma_{3,1} \subseteq \P^7$ to $\P^5$, the $\P^1$ fibers of $Q$
do in fact intersect.
\end{remark}

\begin{thm}\label{AllLins}
If $U$ is basepoint free, then the only possibilities for linear first syzygies
of $I_U$ are
\begin{enumerate}
\item $I_U$ has a unique first syzygy of bidegree $(0,1)$ and no
other linear syzygies.
\item $I_U$ has a pair of first syzygies of bidegree $(0,1)$ and no
other linear syzygies.
\item $I_U$ has a unique first syzygy of bidegree $(1,0)$ and no
other linear syzygies.
\end{enumerate}
\end{thm}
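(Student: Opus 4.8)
The plan is to assemble Theorem~\ref{AllLins} from the structural results already in hand, essentially as a bookkeeping exercise that collates Corollary~\ref{NoMix}, Lemma~\ref{LS5-10}, Proposition~\ref{LS2}, and the defining relations of the syzygy modules. First I would observe that a linear first syzygy of $I_U$ is necessarily of bidegree $(0,1)$ or $(1,0)$, since these are the only bidegrees $(a,b)$ with $a+b=1$ in the bigraded setting, and a first syzygy of $I_U \subseteq R$ generated in bidegree $(2,1)$ has bidegree $(2+a, 1+b)$ with $(a,b)$ the ``shift'' — the genuinely linear ones are exactly those with the smallest total twist, i.e. $(a,b) \in \{(0,1),(1,0)\}$. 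So the three listed cases are exactly: (i) exactly one $(0,1)$ syzygy and no $(1,0)$; (ii) two $(0,1)$ syzygies and no $(1,0)$; (iii) one $(1,0)$ syzygy and no $(0,1)$; and one must rule out everything else.

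Next I would dispatch the mixed cases and the multiplicity counts. By Corollary~\ref{NoMix}, $I_U$ cannot simultaneously have a $(0,1)$ syzygy and a $(1,0)$ syzygy, so the $(0,1)$-world and the $(1,0)$-world are disjoint. On the $(1,0)$ side, Lemma~\ref{LS5-10} says there is at most one such syzygy, giving case (iii). On the $(0,1)$ side, I need: there are at most two independent $(0,1)$ syzygies, and if there are two then no further linear syzygy exists. The bound of two comes from Lemma~\ref{LS1}: a $(0,1)$ syzygy forces $I_U = \langle pu, pv, p_2, p_3\rangle$; if there were a third independent $(0,1)$ syzygy beyond the ``obvious'' one $v\cdot(pu)-u\cdot(pv)=0$, one runs the reduction argument of Proposition~\ref{LS2} (in either Case 1 or Case 2 of the proof of Proposition~\ref{LS3}), where after eliminating $stu,stv$ (resp. $s^2u, s^2v$) terms from $p_2, p_3$, any additional $(0,1)$ syzygy must involve only $p_2, p_3$ and hence, by Lemma~\ref{LS1} applied to $\langle p_2, p_3\rangle$, forces $\langle p_2, p_3\rangle = \langle q_2 u, q_2 v\rangle$; a fourth would force $p_2$ or $p_3$ to vanish or be dependent, contradicting four-dimensionality of $U$. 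When exactly two $(0,1)$ syzygies are present, Proposition~\ref{LS2} gives the primary decomposition $I_U = \langle u,v\rangle \cap \langle q_1, q_2\rangle$ and the numerical Type 6 resolution, which manifestly has no linear syzygy past the two already recorded — so case (ii) is ``no other linear syzygies.'' When exactly one $(0,1)$ syzygy is present, Proposition~\ref{LS3} and Theorem~\ref{T5exact} give a Type 5 resolution whose $\phi_2$ has no further columns of bidegree $(0,1)$ or $(1,0)$; combined with Corollary~\ref{NoMix} this gives case (i).

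The main obstacle — really the only non-mechanical point — is establishing the ``no other linear syzygies'' clause cleanly in case (i), i.e. ruling out a second $(0,1)$ syzygy when we have assumed only one. This is where I would lean on the explicit form of $\phi_2$ in Proposition~\ref{LS3}: the unique bidegree $(0,1)$ column is the first column $[v, -u, 0, 0]^t$, and minimality of the resolution (Theorem~\ref{T5exact}) together with the fact that the remaining columns have bidegree $(1,1)$, $(2,0)$ forbids any additional $(0,1)$ entry in a minimal generating set of the syzygy module. Equivalently, and perhaps more transparently, I would argue directly: a second independent $(0,1)$ syzygy would, by the argument above, force $\langle p_2, p_3 \rangle$ to be of the form $\langle q_2 u, q_2 v\rangle$, which would make the matrix $A(u,v)$ of Proposition~\ref{LS3} have dependent columns — exactly the situation explicitly excluded in that proof (``If the columns are dependent, then \ldots yielding another syzygy of bidegree $(0,1)$, contradicting our hypothesis''). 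So the contradiction is already isolated; the proof of Theorem~\ref{AllLins} just needs to point to it. I would then close by noting that the three cases are mutually exclusive and, by Lemma~\ref{LS1}/Lemma~\ref{LS2-10}, cover every way a linear syzygy can occur, completing the classification.
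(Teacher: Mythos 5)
Your proposal is correct and takes essentially the same route as the paper: the paper's proof of Theorem~\ref{AllLins} likewise cites Corollary~\ref{NoMix} to rule out mixed $(0,1)$/$(1,0)$ syzygies, Lemma~\ref{LS5-10} for at most one $(1,0)$ syzygy, and Proposition~\ref{LS2} (whose conclusion pins down the Type 6 resolution, which has exactly two $(0,1)$ first syzygies) to cap the $(0,1)$ count at two. Your extra detail on why a third $(0,1)$ syzygy forces column dependence in $A(u,v)$ is just unpacking what the paper compresses into a citation of Propositions~\ref{LS3} and~\ref{LS2}.
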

\begin{proof}
It follows from Proposition~\ref{LS3} and Proposition~\ref{LS2} that
both of the first two items can occur. That there cannot be three
or more linear syzygies of bidegree $(0,1)$ follows easily from
the fact that if there are two syzygies of bidegree $(0,1)$ then
$I_U$ has the form of Proposition~\ref{LS2} and the resolution
is unique. Corollary~\ref{NoMix} shows there cannot be linear
syzygies of both bidegree $(1,0)$ and bidegree $(0,1)$ and  
Lemma~\ref{LS5-10} shows there can be at most one linear syzygy of
bidegree $(1,0)$.
\end{proof}

Our next theorem strengthens Lemma~\ref{LS4-10}: 
there is a minimal first syzygy of bidegree $(0,2)$ iff the $p$ in 
Lemma~\ref{LS2-10} factors. We need a pair of lemmas:
\begin{lem}\label{P2fiberBP}
If $\P(U)$ contains a $\P^2$ fiber of $\Sigma_{2,1}$, then $U$ is not 
basepoint free.
\end{lem}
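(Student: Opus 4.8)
The plan is to work directly with the explicit parametrization of the $\P^2$ fibers of $\Sigma_{2,1}$. Recall from \S 2 that $\Sigma_{2,1}$ is the locus of polynomials in $R_{2,1}$ that factor as $q(s,t)\cdot l(u,v)$, and that every $\P^2\subseteq\Sigma_{2,1}$ is a fiber over a point of the $\P^1$ factor; concretely, fixing $l(u,v)=au+bv$ with $(a:b)\in\P^1$, the associated $\P^2$ fiber is
\[
F_{(a:b)} \;=\; \P\bigl(\{\, q(s,t)\cdot(au+bv) : q\in R_{2,0}\,\}\bigr) \;=\; \P\bigl(l(u,v)\cdot R_{2,0}\bigr).
\]
So the hypothesis $F_{(a:b)}\subseteq\P(U)$ says that $U$ contains the three-dimensional space $(au+bv)\cdot R_{2,0} = \spn\{s^2(au+bv),\, st(au+bv),\, t^2(au+bv)\}$.

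From here the key step is to observe that any such $U$ has a basepoint. After a change of coordinates on the $\Pone$ with coordinates $u,v$ we may assume $l(u,v)=u$, so that $s^2u,\,stu,\,t^2u\in I_U$, i.e. $\langle s,t\rangle^2\cdot u\subseteq I_U$. Let $p_3$ be a fourth generator completing these three to a basis of $U$; write $p_3 = u\cdot f(s,t) + v\cdot g(s,t)$ with $f,g\in R_{2,0}$, and note that modulo the first three generators we may take $f=0$, so $I_U = \langle s^2u,\,stu,\,t^2u,\,v\cdot g(s,t)\rangle$ with $g\in R_{2,0}$ nonzero (else $\dim U<4$). Then on $\PP$ the common zero locus of the generators contains $\V(u)\cap\V(g(s,t))$: indeed when $u=0$ the first three generators vanish, and $g(s,t)=0$ is one further equation in the $[s:t]$ factor, which has a solution since $g$ is a nonzero binary form. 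This point of $\PP$ lies in $\V(I_U)$, so $U$ is not basepoint free. Equivalently, in the language used earlier, $\langle u,\,s,t\rangle$ or more precisely $\langle u, L(s,t)\rangle$ for any linear factor $L$ of $g$ is a minimal prime of $I_U$ not contained in $\langle s,t\rangle\cap\langle u,v\rangle$, so $\sqrt{I_U}\neq\langle s,t\rangle\cap\langle u,v\rangle$.

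The main obstacle is purely bookkeeping: making sure the reduction of the fourth generator $p_3$ against the three generators $s^2u,stu,t^2u$ is carried out so that only the $v$-part survives, and then checking that this $v$-part $v\cdot g(s,t)$ is genuinely nonzero and of bidegree $(2,0)$ in its $s,t$-factor (nonzero because otherwise $U$ would be only three-dimensional, contradicting the standing assumption $\dim U=4$). Once that normal form $I_U=\langle s^2u,\,stu,\,t^2u,\,v\cdot g(s,t)\rangle$ is in hand, the existence of a basepoint is immediate from the vanishing at any point of $\V(u)\times\V(g)\subseteq\PP$, and there is nothing further to prove.
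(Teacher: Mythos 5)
Your proposal is correct and follows essentially the same route as the paper's own proof: normalize so the fiber corresponds to $l(u,v)=u$, reduce the fourth generator to $v\cdot g(s,t)$ with $g$ a nonzero binary quadratic, and observe that any point of $\V(u)\cap\V(g)\subseteq\PP$ is a basepoint (the paper phrases this by factoring $g=l_1l_2$ and exhibiting $\langle u,l_1\rangle$ as an associated prime). Your write-up simply spells out the reduction step in more detail; the mathematical content is identical.
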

\begin{proof}
If $\P(U)$ contains a $\P^2$ fiber of $\Sigma_{2,1}$ over a point of $\P^1$
corresponding to a linear form $l(u,v)$, after a change of 
basis $l(u,v)=u$ and so
\[
I_U = \langle s^2u,  stu, t^2u, l_1(s,t)l_2(s,t)v \rangle.
\]
This implies that $\langle u, l_1(s,t) \rangle \in \mbox{Ass}(I_U)$, so $U$ is not basepoint free.
\end{proof}
The next lemma is similar to a result of \cite{cds}, but differs due 
to the fact that the subspaces $\P(W) \subseteq \P(V)$ studied in \cite{cds} 
are always basepoint free.
\begin{lem}\label{02syz}
If $U$ is basepoint free, then there is a minimal first syzygy 
on $I_U$ of bidegree $(0,2)$ iff there 
exists $\P(W)\simeq \P^2 \subseteq \P(U)$ such that $\P(W) \cap \Sigma_{2,1}$ is a smooth conic.
\end{lem}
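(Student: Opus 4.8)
The plan is to prove both directions of the equivalence in Lemma~\ref{02syz} by relating the bidegree $(0,2)$ syzygy to the presence of a plane section of $\Sigma_{2,1}$ that is a smooth conic. First I would dispose of the trivial structural observations. If $U$ has a bidegree $(0,1)$ syzygy we are in the setting of Lemma~\ref{LS1} and the Type 5 or 6 resolutions of Proposition~\ref{LS3} and Proposition~\ref{LS2} show there is no minimal $(0,2)$ syzygy; so we may assume no $(0,1)$ syzygy, and by Corollary~\ref{NoMix} and Lemma~\ref{LS5-10} the only linear syzygy that can occur is at most a single one of bidegree $(1,0)$. The key bookkeeping is that a bidegree $(0,2)$ syzygy $\sum_i q_i(u,v) p_i = 0$ with $q_i \in R_{0,2}$ says exactly that the four sections $p_0,\dots,p_3$, viewed as elements of $R_{2,1}$, become linearly dependent after multiplication by $R_{0,1}$ in a way giving an element of $R_{2,0}\cdot R_{0,2}$; more usefully, grouping by the $u$ and $v$ parts, a $(0,2)$ syzygy is a pair of $(0,1)$-syzygy-like identities packaged so that the obstruction lies in the Koszul relations on $(u^2,uv,v^2)$.

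The conceptual heart is the following translation. Write each $p_i = u\cdot a_i(s,t) + v\cdot b_i(s,t)$ with $a_i,b_i \in R_{2,0}$; then $U$ determines the $2\times 4$ matrix $M$ with rows $(a_0,a_1,a_2,a_3)$ and $(b_0,b_1,b_2,b_3)$ over $k[s,t]$, and $\P(U)\subseteq \P^5$ meeting $\Sigma_{2,1}$ is governed by the $2\times 2$ minors of this matrix (compare the defining matrix of $\Sigma_{2,1}$ displayed in \S 2). A $\P^2 = \P(W)\subseteq \P(U)$ with $\P(W)\cap \Sigma_{2,1}$ a smooth conic corresponds to choosing a three-dimensional subspace of forms on which the induced $2\times 3$ matrix of $R_{2,0}$-entries has $2\times 2$ minors spanning a two-dimensional space of quadratics with no common root — i.e. a net of conics cutting out a smooth conic in $\P^2$. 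I would show that such a $W$ exists iff one can produce, inside the syzygy module of $I_U$, a relation whose $u$- and $v$-components are independent linear forms times a common quadratic cofactor pattern, which upon clearing is precisely a nonzero element of degree $(0,2)$ that is not in the submodule generated by lower-degree syzygies (Koszul on $u,v$ composed with any $(1,0)$ syzygy, and products $R_{0,1}\cdot(\text{first syzygies of degree }(0,1))$ — the latter being empty by our reduction). For the forward direction, given a minimal $(0,2)$ syzygy I extract from its $u,v$-decomposition two linear forms $l, l' \in R_{0,1}$ and show the span of the corresponding three columns of $M$ gives the desired $W$; minimality is exactly what guarantees the conic is smooth rather than degenerate (a singular conic section would, via the classification of \S 2 and Lemma~\ref{P2fiberBP}, either force a basepoint or force the syzygy to be non-minimal, e.g. to factor through a $(0,1)$ syzygy).

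The main obstacle I anticipate is the careful accounting of \emph{minimality}: one must rule out that an apparent $(0,2)$ syzygy is a combination of a bidegree $(0,1)$ syzygy multiplied by a linear form in $u,v$ together with Koszul relations among $u,v$. Under our standing reduction there is no $(0,1)$ syzygy, so the danger is instead that the $(0,2)$ relation is a product $l(u,v)\cdot(\text{Koszul syzygy on }u,v)$ scaled by common factors, which would make the corresponding plane section a pair of lines (a rank-one degeneration) rather than a smooth conic — and symmetrically, on the geometric side, the conic $\P(W)\cap\Sigma_{2,1}$ being smooth rather than a union of lines or a double line is exactly the condition that the three relevant $2\times 2$ minors of $M$ have no common linear factor, which by Lemma~\ref{P2fiberBP} and basepoint-freeness is automatic once $\P(W)$ contains no $\P^2$-fiber. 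I would close the argument by checking that ``no common linear factor among the three minors'' matches ``the $(0,2)$ syzygy does not reduce modulo the Koszul syzygy on $(u,v)$ and the $(1,0)$ syzygy,'' i.e. exactly the statement that the $(0,2)$ syzygy is a minimal generator of the first syzygy module. The remaining verifications are the routine coefficient comparisons of the sort carried out in Lemma~\ref{LS4-10}, which I would only sketch.
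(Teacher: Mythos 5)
Your overall strategy shares the paper's main skeleton — decompose the bidegree $(0,2)$ syzygy via the Koszul relations on $(u^2,uv,v^2)$, use the absence of $(0,1)$ syzygies to get linear independence, and produce a $\P(W)\subseteq\P(U)$ that meets $\Sigma_{2,1}$ in a conic. However, the execution has a few genuine problems and one real gap.

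First, your bookkeeping via the $2\times 4$ matrix $M$ is not what actually drives the proof, and several of the claims attached to it do not hold up. You say that $W$ is ``the span of the corresponding three columns of $M$,'' but the plane $\P(W)$ is emphatically \emph{not} spanned by three of the $p_i$. In the paper's argument the syzygy $\sum q_i p_i = 0$ with $q_i = a_i u^2 + b_i uv + c_i v^2$ is regrouped as $u^2 f_0 + uv f_1 + v^2 f_2 = 0$ with $f_0 = \sum a_i p_i$, $f_1 = \sum b_i p_i$, $f_2 = \sum c_i p_i$; because $(f_0,f_1,f_2)$ is a syzygy on $(u^2,uv,v^2)$, one gets $f_0 = \alpha v$, $f_1 = \beta v - \alpha u$, $f_2 = -\beta u$ with $\alpha,\beta\in R_{2,0}$, and $W=\spn\{f_0,f_1,f_2\}$. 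These are new scalar linear combinations of the $p_i$ determined by the \emph{syzygy coefficients}, not columns of $M$ or a subset of the generators. Likewise, the statement that $\P(W)\cap\Sigma_{2,1}$ being a smooth conic corresponds to the $2\times 2$ minors of the induced $2\times 3$ matrix ``spanning a two-dimensional space of quadratics with no common root'' is wrong in two ways: those minors of the matrix with entries in $R_{2,0}$ live in $R_{4,0}$, not $R_{2,0}$; and the rank-one locus in $\P(W)\simeq\P^2$ is cut out by the three $2\times 2$ minors of the $2\times 3$ matrix of \emph{linear} forms $(\alpha_j(x),\beta_j(x))$, which are quadrics in $x$ — and for the zero locus to be a smooth conic (a degree-two divisor), all three quadrics must be proportional to one and the same irreducible quadric, i.e.\ they span a \emph{one}-dimensional space, not two.

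Second, and more importantly, you never establish that the intersection $\P(W)\cap\Sigma_{2,1}$ is \emph{exactly} a smooth conic, rather than properly containing one. Since $\Sigma_{2,1}$ has degree three, $\P(W)\cap\Sigma_{2,1}$ can a priori be a cubic curve, e.g.\ a smooth conic union a residual line. The paper exhibits the conic $C$ explicitly as the Veronese image $(x:y)\mapsto (x\alpha+y\beta)(xv-yu)$ — which is automatically a smooth conic once $f_0,f_1,f_2$ are independent, no separate smoothness argument needed — and then rules out a residual line by checking both possibilities: a line inside a $\P^2$ fiber of $\Sigma_{2,1}$ would force nearby fibers to meet $\P(W)$ in lines as well, which is impossible; a $\P^1$ fiber of $\Sigma_{2,1}$ in $\P(W)$ would produce a $(0,1)$ syzygy, contradicting the standing assumption. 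Your proposal replaces this with a vaguer claim that a degenerate intersection ``would force a basepoint or force the syzygy to be non-minimal,'' but this does not address the residual-line issue at all and should be made precise; as written, it conflates ``the conic is singular'' with ``the intersection is more than a conic,'' which are different failures with different rescues.

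In short: the Koszul-decomposition idea and the reliance on the absence of $(0,1)$ syzygies are the right instincts and match the paper, but the matrix-minor bookkeeping is off, the identification of $W$ is wrong as stated, and the argument that $\P(W)\cap\Sigma_{2,1}$ is exactly a smooth conic (degree exactly two) is missing.
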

\begin{proof}
Suppose $\sum q_ip_i =0$ is a minimal first syzygy of bidegree $(0,2)$, 
so that $q_i = a_iu^2+b_iuv+c_iv^2$. Rewrite this as
$u^2 \sum a_ip_i +uv \sum b_ip_i +v^2 \sum c_ip_i = 0$ and  define 
$f_0 = \sum a_ip_i$, $f_1 = \sum b_ip_i$, $f_2 = \sum c_ip_i$.
By construction, $\langle f_0,f_1,f_2 \rangle \subseteq I_U$ and  
$[f_0,f_1,f_2]$ is a syzygy on $[u^2,uv,v^2]$, so
\[
\left[ \!
\begin{array}{c}
f_0\\
f_1\\
f_2
\end{array}\! \right] = \alpha \cdot \left[ \!
\begin{array}{c}
v\\
-u\\
0
\end{array}\! \right]+ \beta \cdot \left[ \!
\begin{array}{c}
0\\
v\\
-u
\end{array}\! \right] = \left[ \!
\begin{array}{c}
\alpha v\\
\beta v - \alpha u\\
-\beta u
\end{array}\! \right], \mbox{ for some }\alpha, \beta \in R_{2,0}.
\]
If $\{f_0,f_1,f_2\}$ are not linearly independent, there exist constants
$c_i$ with 
\[
c_0\alpha v +c_1(\beta v - \alpha u) -c_2 \beta u = 0.
\]
This implies that 
$(c_0 \alpha +c_1 \beta) v = (c_1\alpha -c_2\beta)u$, so 
$\alpha = k \beta$. But then $\{\alpha u, \alpha v \} \subseteq I_U$, which
means there is a minimal first syzygy of bidegree $(0,1)$, contradicting 
the classification of \S 2. Letting $W = \spn \{f_0,f_1,f_2\}$, we have
that $\P^2 \simeq \P(W) \subseteq \P(U)$. The actual bidegree $(0,2)$ 
syzygy is 
\[
\DET \left[ \!
\begin{array}{ccc}
v & 0 &f_0\\
-u & v& f_1\\
0  & -u &f_2
\end{array}\! \right] = 0. 
\]
To see that the $\P(W)$ meets $\Sigma_{2,1}$ in a smooth conic, note that 
by Lemma~\ref{P2fiberBP}, $\P(W)$ cannot be equal to a $\P^2$ 
fiber of $\Sigma_{2,1}$, or $\P(U)$ would
have basepoints. The image of the map $\P^1 \rightarrow \P(W)$ defined
by 
\[
(x:y) \mapsto x^2 (\alpha v) +xy(\beta v - \alpha u) + y^2(-\beta u)=(x\alpha +y \beta)(xv-yu)
\]
is a smooth conic $C \subseteq \P(W) \cap \Sigma_{2,1}$. Since 
$\P(W) \cap \Sigma_{2,1}$ is a curve of degree at most three, if
this is not the entire intersection, there would be a line $L$ residual 
to $C$. If $L \subseteq F_x$, where $F_x$ is a $\P^2$ fiber over $x \in \P^1$, then 
for small $\epsilon$, $F_{x+\epsilon}$ also meets $\P(W)$ in a line,
which is impossible. If $L$ is a $\P^1$ fiber of $\Sigma_{2,1}$, this
would result in a bidegree $(0,1)$ syzygy, which is impossible 
by the classification of \S 2.
\end{proof}
\begin{defn} A line $l \subseteq \P(s^2,st,t^2)$ with $l=af+bg$, 
$f,g \in \spn \{s^2,st,t^2\}$ is split if $l$ has a fixed factor: 
for all $a,b \in \P^1$, $l = L(aL'+bL'')$ with $L \in R_{1,0}$. 
\end{defn}
\begin{thm}\label{Type4resoln}
If $U$ is basepoint free, then $I_U$ has minimal first syzygies 
of bidegree $(1,0)$ and $(0,2)$ iff 
\[
\P(U) \cap \Sigma_{2,1} = C \cup L,
\]
where $L\simeq \P(W')$ is a split line in 
a $\P^2$ fiber of $\Sigma_{2,1}$ and  $C$ is
a smooth conic in $\P(W)$, such that 
$\P(W') \cap \P(W) = C \cap L$ is a point and  $\P(W') + \P(W) = \P(U)$.
\end{thm}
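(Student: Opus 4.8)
The plan is to prove the two implications separately, using the syzygy normal form of Lemma~\ref{LS2-10}, the dictionary of Lemma~\ref{02syz} between bidegree $(0,2)$ first syzygies and planes meeting $\Sigma_{2,1}$ in a smooth conic, and the description of the linear subspaces and surfaces lying on $\Sigma_{2,1}$ recorded in \S 2.

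For the reverse implication, suppose $\P(U)\cap\Sigma_{2,1}=C\cup L$ as in the statement. Because $L=\P(W')$ is a split line in a $\P^2$ fiber $\P(R_{2,0}\cdot l_0)$ of $\Sigma_{2,1}$, after a coordinate change in $s,t$ and in $u,v$ I may take $l_0=u$ and the fixed linear factor to be $s$, so that $W'=\spn\{s^2u,stu\}\subseteq U$; then $s^2u,stu\in I_U$ and $t\cdot(s^2u)-s\cdot(stu)=0$ is a (necessarily minimal) first syzygy of bidegree $(1,0)$, since there are no relations among the four generators in smaller bidegree. On the other hand the hypotheses give $\P(W)\cap\Sigma_{2,1}=\P(W)\cap(C\cup L)=C$ (the point $C\cap L$ lies on $C$), so Lemma~\ref{02syz} produces a minimal first syzygy of bidegree $(0,2)$.

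For the forward implication, suppose $I_U$ has minimal first syzygies of both bidegrees. By Lemma~\ref{LS2-10}, $I_U=\langle ps,pt,p_2,p_3\rangle$ with $p$ of bidegree $(1,1)$, and $p$ must be decomposable: if $p$ were indecomposable then $I_U$ would have numerical Type~3 by Proposition~\ref{LS3-10}, whose minimal free resolution (Table~\ref{T2}) has no first syzygy of bidegree $(0,2)$. After a coordinate change $p=su$, so $W':=\spn\{s^2u,stu\}\subseteq U$ and $L:=\P(W')$ is a split line in the $\P^2$ fiber $\P(R_{2,0}\cdot u)$ of $\Sigma_{2,1}$; hence $L\subseteq\P(U)\cap\Sigma_{2,1}$. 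Applying Lemma~\ref{02syz} to the bidegree $(0,2)$ syzygy yields $\P(W)\cong\P^2\subseteq\P(U)$ with $\P(W)\cap\Sigma_{2,1}=C$ a smooth, hence irreducible, conic, so also $C\subseteq\P(U)\cap\Sigma_{2,1}$. Since an irreducible conic contains no line, $W'\not\subseteq W$; therefore $L$ and the plane $\P(W)$ meet in exactly one point $x$ and together span $\P(U)$, i.e. $\P(W')+\P(W)=\P(U)$. As $x\in L\subseteq\Sigma_{2,1}$ and $x\in\P(W)$ we get $x\in\P(W)\cap\Sigma_{2,1}=C$, so $C\cap L\subseteq\P(W)\cap\P(W')=\{x\}$ and in fact $C\cap L=\{x\}$.

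It remains to upgrade $C\cup L\subseteq\P(U)\cap\Sigma_{2,1}$ to an equality, and this is where I expect the real work to lie. First I would show the intersection is one-dimensional: it is not three-dimensional since $\Sigma_{2,1}$ is not linear, and it is not a surface, for (since then $\P(U)\not\subseteq\Sigma_{2,1}$) a surface $S\subseteq\P(U)\cong\P^3$ lying on $\Sigma_{2,1}$ lies on the quadric surface cut out in $\P(U)$ by one of the not-identically-zero $2\times 2$ minors defining $\Sigma_{2,1}$, so $\deg S\le 2$; then $S$ is a plane---necessarily a $\P^2$ fiber of $\Sigma_{2,1}$ by \S 2, contradicting Lemma~\ref{P2fiberBP}---or an irreducible quadric, which by a short computation with divisor classes on $\Sigma_{2,1}\cong\P^2\times\P^1$ must be $\ell\times\P^1$ for a line $\ell$ of the $\P^2$ factor, forcing $\P(U)=\spn(S)$ and $I_U=\langle q_1u,q_1v,q_2u,q_2v\rangle$, which has two first syzygies of bidegree $(0,1)$ and so contradicts Corollary~\ref{NoMix} together with the bidegree $(1,0)$ syzygy of $I_U$. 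Once $\P(U)\cap\Sigma_{2,1}$ is known to be a curve, a general $\P^2\subseteq\P(U)$ meets it in finitely many points and hence meets $\Sigma_{2,1}$ in at most $\deg\Sigma_{2,1}=3$ points, so $\deg(\P(U)\cap\Sigma_{2,1})\le 3=\deg(C\cup L)$; since $C$ and $L$ are among its one-dimensional irreducible components, $\P(U)\cap\Sigma_{2,1}=C\cup L$. The crux is thus the exclusion of a two-dimensional component: for basepoint free $U$ the locus $\P(U)\cap\Sigma_{2,1}$ really can be a surface---this is exactly the Type~6 ($\Sigma_{1,1}$) case---so the argument genuinely requires the presence of a bidegree $(1,0)$ syzygy and Corollary~\ref{NoMix}, and the remaining steps (the normal form, identifying the split line and the conic plane, and the incidence geometry of a line and a plane in $\P^3$) are routine given the earlier results.
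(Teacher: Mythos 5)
Your proof is correct, but it diverges from the paper's argument in one substantive way and also fills in a step the paper leaves terse. The paper's forward direction splits on whether the $p$ of Lemma~\ref{LS2-10} factors, and for indecomposable $p$ it derives a contradiction by a direct computation: the point $L\cap\P(W)$ is parametrized as $p\cdot L_0(s,t)$, the coefficients are compared against $W=\spn\{\alpha v,\beta v-\alpha u,-\beta u\}$ to produce the symmetric $2\times 2$ system in Equation~\ref{lMat}, and then Cramer's rule (when $ab-c^2\ne 0$) or a rank-drop (when $ab-c^2=0$) contradicts basepoint freeness or irreducibility of $p$, respectively. You instead rule out indecomposable $p$ by citing Proposition~\ref{LS3-10} (equivalently Theorem~\ref{T3res}): a Type~3 resolution has no $(-2,-3)$ summand in the first syzygy module, hence no bidegree $(0,2)$ first syzygy. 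This is not circular---Theorem~\ref{T3res}'s proof is self-contained and does not invoke Theorem~\ref{Type4resoln}---but it is a forward reference: in the paper's ordering, Theorem~\ref{T3res} appears \emph{after} Theorem~\ref{Type4resoln}, and one of the purposes of Theorem~\ref{Type4resoln} is precisely to supply the ``$(0,2)$ syzygy $\Rightarrow p$ factors'' implication that feeds into Proposition~\ref{LS3-10}. So your argument works only after reordering the exposition; the paper's direct Cramer's-rule calculation is there exactly to keep the proof self-contained at that point in the paper, and if you use your route you should say explicitly that $\S 4.1$ must be rearranged to prove Theorem~\ref{T3res} first.

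On the other hand, your treatment of the equality $\P(U)\cap\Sigma_{2,1}=C\cup L$ is more careful than the paper's, which asserts it (``In particular...'') after noting only that no $\P^1$ fiber lies in $\P(U)$. Your argument---ruling out a surface component (degree $\le 2$ because it lies on a nonzero quadric restricted from a $2\times 2$ minor; degree $1$ forces a $\P^2$ fiber, killed by Lemma~\ref{P2fiberBP}; degree $2$ forces a $\Sigma_{1,1}$ and hence two $(0,1)$ syzygies, killed by Corollary~\ref{NoMix}), then bounding the degree of the residual curve by $3$ via a generic $\P^2$-slice---is a genuine improvement in explicitness and is sound. The trade-offs: the paper's proof is local and elementary but terser; yours is cleaner conceptually but leans on heavier machinery (the full Type~3 resolution) for the ``$p$ decomposable'' step. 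Both are valid.
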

\begin{proof}
Suppose there are minimal first syzygies of bidegrees $(1,0)$ and $(0,2)$.
By Lemma~\ref{02syz}, the $(0,2)$ syzygy determines a conic $C$ in a 
distinguished $\P(W) \subseteq \P(U)$. Every point of $C$ lies on both a $\P^2$
and $\P^1$ fiber of $\Sigma_{2,1}$. No $\P^1$ fiber of $\Sigma_{2,1}$ is
contained in $\P(U)$, or there would be a first syzygy of 
bidegree $(0,1)$, which is impossible by Corollary~\ref{NoMix}. 
By Lemma~\ref{LS2-10}, there exists 
$W' = \spn \{ps,pt \} \subseteq U$, so we have a distinguished line
$\P(W') \subseteq \P(U)$. We now consider two possibilities:
\vskip .04in
\noindent Case 1: If $p$ factors, then $\P(W')$ is a split line contained in $\Sigma_{2,1}$, 
which must therefore be contained in a $\P^2$ fiber and  $p = L(s,t)l(u,v)$, 
where $l(u,v)$ corresponds to a point of a $\P^1$ fiber of $\Sigma_{2,1}$ and  
$\P(W') \cap \P(W)$ is a point. In particular 
$\P(U) \cap \Sigma_{2,1}$ is the union of a line and conic, 
which meet transversally at a point. 
\vskip .04in
\noindent Case 2: If $p$ does not factor, then $p=a_0su+a_1sv+a_2tu+a_3tv$, 
$a_0a_3-a_1a_2 \ne 0$. The corresponding line $L = \P(ps,pt)$ 
meets $\P(W)$ in a point and since $p$ is irreducible $L \cap C = \emptyset$. 
Since $W = \spn \{\alpha v, \beta v - \alpha u, -\beta v \}$, we must have 
\[
p \cdot L_0(s,t) = a \alpha v -b \beta u + c(\beta v - \alpha u)
\]
for some $\{a,b,c\}$, where $L_0(s,t)=b_0s+b_1t$ corresponds to $L \cap \P(W)$.
Write $p \cdot L_0(s,t)$ as $l_1uL_0(s,t) + l_2vL_0(s,t)$, where 
\[
\begin{array}{ccc}
l_1(s,t) &= &a_0s+a_2t\\
l_2(s,t) &= &a_1s+a_3t
\end{array}
\]
Then 
\[
\begin{array}{ccc}
p \cdot L_0(s,t) & = & l_1(s,t)L_0(s,t)u + l_2(s,t)L_0(s,t)v \\
                 & = & a \alpha v -b \beta u + c(\beta v - \alpha u)\\
                 & = & (-b\beta -c\alpha)u +(a \alpha +c \beta) v
\end{array}
\]
In particular, 
\begin{equation}\label{lMat}
\left[ \!
\begin{array}{cc}
a & c\\
c & b
\end{array}\! \right] \cdot \left[ \!
\begin{array}{c}
\alpha\\
\beta
\end{array}\! \right] = \left[ \!
\begin{array}{c}
l_2 L_0\\
-l_1 L_0
\end{array}\! \right] 
\end{equation}
If \[
\DET \left[ \!
\begin{array}{cc}
a & c\\
c & b
\end{array}\! \right] \ne 0, 
\]
then applying Cramer's rule to Equation~\ref{lMat} 
shows that $\alpha$ and $\beta$ share
a common factor $L_0(s,t)$. But then 
$W = \spn \{L_0 \gamma, L_0 \delta, L_0 \epsilon \}$, which
contradicts the basepoint freeness of $U$: change coordinates
so $L_0 =s$, so $U = s\gamma, s \delta, s\epsilon, p_3$. Since
$p_3|_{s=0} = t^2l(u,v)$, $\langle s, l(u,v) \rangle$ is an 
associated prime of $I_U$, a contradiction. To conclude,
consider the case $ab-c^2 =0$. Then there is a constant $k$ 
such that 
\[
\left[ \!
\begin{array}{cc}
a & c\\
ka & kc
\end{array}\! \right] \cdot \left[ \!
\begin{array}{c}
\alpha\\
\beta
\end{array}\! \right] = \left[ \!
\begin{array}{c}
l_2 L_0\\
-l_1 L_0
\end{array}\! \right], 
\]
which forces $kl_1(s,t) = -l_2(s,t)$. Recalling that $l_1(s,t) = a_0s+a_2t$ and $l_2(s,t) = a_1s+a_3t$, this implies that 
\[
\DET \left[ \!
\begin{array}{cc}
a_0 & a_2\\
a_1 & a_3
\end{array}\! \right] = 0, 
\]
contradicting the irreducibility of $p$.\newline

This shows that if $I_U$ has minimal first syzygies of bidegree $(1,0)$ and
$(0,2)$, then $\P(U) \cap \Sigma_{2,1} = C \cup L$ meeting transversally
at a point. The remaining implication follows from Lemma~\ref{LS4-10} and
Lemma~\ref{02syz}.
\end{proof}

For $W \subseteq H^0(\OPP(2,1))$ a basepoint free subspace of 
dimension three, the minimal free resolution of $I_W$ is 
determined in \cite{cds}: there are two possible  minimal 
free resolutions, which depend only whether 
$\P(W)$ meets $\Sigma_{2,1}$ in a finite set of points, or a smooth
conic  $C$. By Theorem~\ref{Type4resoln}, if there are minimal
first syzygies of bidegrees $(1,0)$ and $(0,2)$, then $U$ contains
a $W$ with $\P(W) \cap \Sigma_{2,1} = C$, which suggests building 
the Type 4 resolution by choosing $W = \spn \{p_0,p_1,p_2\}$ to 
satisfy $\P(W) \cap \Sigma_{2,1} = C$ and  constructing a mapping
cone. There are two problems with this approach. First, there
does not seem to be an easy description for $\langle p_0,p_1,p_2\rangle :p_3$.
Second, recall that the mapping cone resolution need not be minimal.
A computation shows that the shifts in the resolutions of 
$\langle p_0,p_1,p_2\rangle :p_3$ and $\langle p_0,p_1,p_2\rangle$ 
overlap and  there are many cancellations. However, choosing $W$ to 
consist of two points on $L$ and one on $C$ solves both of these 
problems at once.
\begin{lem}\label{Type4L1}
In the setting of Theorem~\ref{Type4resoln}, let $p_0$ correspond to
$L \cap C$ and  let $p_1, p_2$ correspond to points on $L$ and $C$
(respectively) distinct from $p_0$. 
If $W = \spn \{p_0,p_1,p_2 \}$, then $I_W$ has a Hilbert
Burch resolution. Choosing coordinates so 
$W = \spn \{sLv, tLv, \beta u \}$, the primary decomposition of $I_W$ is 
\[
\cap_{i=1}^4 I_i = \langle s,t\rangle^2 \cap \langle u,v \rangle \cap \langle \beta, v \rangle \cap \langle L, u \rangle.
\]
\end{lem}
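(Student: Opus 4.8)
The plan is threefold: (1) put $W$ into the normal form stated in the lemma, extracting along the way the one nontrivial fact that powers the rest, namely $L\nmid\beta$; (2) verify the displayed ideal equality by a direct membership argument; (3) read off the Hilbert--Burch resolution from an explicit matrix. I expect essentially all of the content to sit in step (1).

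For the normal form, recall from the proof of Theorem~\ref{Type4resoln} that the presence of minimal first syzygies of bidegrees $(1,0)$ and $(0,2)$ forces ``Case~1'': $p = L(s,t)\,l(u,v)$ factors, $L = \P(W')$ with $W' = \spn\{ps,pt\}$ is a split line lying in the $\P^2$-fiber of $\Sigma_{2,1}$ over $[l]$ with common $R_{1,0}$-factor $L(s,t)$, and by Lemma~\ref{02syz} the residual conic $C$ is smooth, parametrized by $(x{:}y)\mapsto(xa+yb)(xv-yu)$ for linearly independent $a,b\in R_{2,0}$. From this parametrization, distinct points of $C$ have distinct $R_{0,1}$-factors; moreover $p_0 = L\cap C$ is the \emph{unique} point of $C$ whose $R_{2,0}$-factor is divisible by $L(s,t)$ --- if there were two, then $a$ and $b$ would both be divisible by $L(s,t)$, hence so would every element of $W'$ and of the $(0,2)$-syzygy space, hence of all of $U=\P(W')+\P(W)$, contradicting basepoint-freeness. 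Now change coordinates on $\spn\{u,v\}$ so that the $R_{0,1}$-factor of $p_0$ is $v$ and that of $p_2$ is $u$ (legitimate since these are distinct points of $\P^1$); then $p_0 = L\tilde L v$ with $\tilde L\in R_{1,0}$, $p_2 = \beta u$ with $0\ne\beta\in R_{2,0}$, and $L\nmid\beta$ by the uniqueness just proved since $p_2\ne p_0$. Finally $p_0,p_1$ are distinct points of the split line, all of whose points now have the shape $(\text{linear in }s,t)\cdot Lv$, so $\spn\{p_0,p_1\} = Lv\cdot R_{1,0} = \spn\{sLv,tLv\}$ and therefore $W = \spn\{sLv,tLv,\beta u\}$, of dimension three.

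For the ideal equality, the inclusion $I_W\subseteq\langle s,t\rangle^2\cap\langle u,v\rangle\cap\langle\beta,v\rangle\cap\langle L,u\rangle$ is immediate ($sLv,tLv$ are divisible by $Lv$ and lie in $\langle s,t\rangle^2$; $\beta u$ is divisible by $u$ and $\beta$, and $\beta\in R_{2,0}\subseteq\langle s,t\rangle^2$). For the reverse inclusion, write $g=\sum_{k\ge0}u^k a_k(s,t,v)$ in the intersection. Membership in $\langle s,t\rangle^2$ forces each $a_k\in\langle s,t\rangle^2 k[s,t,v]$; membership in $\langle u,v\rangle$ and in $\langle L,u\rangle$ forces $Lv\mid a_0$, and writing $a_0=Lvc$ one deduces $c\in\langle s,t\rangle$ (using that $v$ is a nonzerodivisor mod $\langle s,t\rangle^2$ and that $L$ is a nonzero linear form in $s,t$), so $a_0\in\langle sLv,tLv\rangle\subseteq I_W$; membership in $\langle\beta,v\rangle$ forces $\beta\mid a_k(s,t,0)$ for all $k$. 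The crux is that $L\nmid\beta$ makes $sL,tL,\beta$ a basis of $R_{2,0}$, whence $\langle sL,tL,\beta\rangle=\langle s,t\rangle^2$ in $k[s,t]$, hence in $k[s,t,v]$. Then for $k\ge1$, writing $a_k=a_k(s,t,0)+vb_k$ with $b_k\in\langle s,t\rangle^2 k[s,t,v]=\langle sL,tL,\beta\rangle k[s,t,v]$, we get $a_k(s,t,0)\in\langle\beta\rangle$ and $vb_k\in\langle sLv,tLv,\beta v\rangle$, so $u^k a_k\in I_W$ (absorbing a power of $u$ into $\beta u$, using $k\ge1$); summing gives $g\in I_W$. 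Irredundancy follows by exhibiting, for each component, a witness in the other three but not in $I_W$ --- for instance $\beta L$, $uv$, $\beta v$, $sLu$ respectively --- and all four associated primes have height two (if $\beta$ is not a prime power one of course splits $\langle\beta,v\rangle$ into its two primary components).

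For the Hilbert--Burch statement, $I_W\subseteq\langle L,u\rangle$ gives height $\le 2$, and $\gcd(sLv,tLv,\beta u)=\gcd(Lv,\beta u)=1$ (again using $L\nmid\beta$) gives height $\ge 2$, so $\depth(I_W)=2$. Writing $\beta = c_0s^2+c_1st+c_2t^2$, let $M$ be the $3\times 2$ matrix with columns $(t,-s,0)^{\mathrm{tr}}$ and $\bigl(-(c_0s+c_1t)u,\,-c_2tu,\,Lv\bigr)^{\mathrm{tr}}$; its signed maximal minors are exactly $sLv,tLv,\beta u$. Since $\depth(I_2(M))=\depth(I_W)=2$, the Buchsbaum--Eisenbud criterion \cite{be} makes the complex $0\to R^2\xrightarrow{M}R^3\xrightarrow{[\,sLv\ tLv\ \beta u\,]}I_W\to 0$ exact, which is the asserted Hilbert--Burch resolution (so $R/I_W$ is Cohen--Macaulay of codimension two). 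The genuinely delicate point throughout is the assertion $L\nmid\beta$ inside step (1): it is the only place where the global basepoint-free hypothesis and the smooth-conic geometry of $C$ are really used, and once it is established steps (2) and (3) are routine commutative algebra.
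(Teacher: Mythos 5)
Your proposal is correct and tracks the paper's proof in spirit---both rest on the normal form $W=\spn\{sLv,tLv,\beta u\}$ with $L\nmid\beta$, an explicit $3\times 2$ Hilbert--Burch matrix, and the four-fold intersection---but you supply considerably more of the supporting detail. Where the paper simply asserts the normal form ``after a suitable change of coordinates'' and invokes $L\nmid\beta$ without comment, you actually derive both: the observation that $p_0$ is the \emph{unique} point of $C$ whose $R_{2,0}$-factor is divisible by $L$ (else $L$ would divide every $a,b$ and hence all of $U$, contradicting basepoint-freeness) is exactly the right way to justify it, and it is the one genuinely geometric input. Your verification of the ideal equality is also organized differently: the paper computes $\langle\beta,v\rangle\cap\langle L,u\rangle=\langle\beta L,vL,\beta u,vu\rangle$ first and then intersects with $\langle s,t\rangle^2\cap\langle u,v\rangle$, whereas you do an element-wise expansion in powers of $u$; both work, the paper's is shorter, yours is more self-contained. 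Your Hilbert--Burch matrix (with second column $(-(c_0s+c_1t)u,\,-c_2tu,\,Lv)^{\mathrm{tr}}$) differs cosmetically from the paper's, which factors $\beta=(a_0s+a_1t)L'(s,t)$ and uses $(a_0uL',\,a_1uL',\,Lv)^{\mathrm{tr}}$; both have the correct signed maximal minors, and your direct appeal to Buchsbaum--Eisenbud for exactness is a clean substitute for invoking Hilbert--Burch as a black box.

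One small slip: in the irredundancy paragraph, the witnesses $\beta L,\,uv,\,\beta v,\,sLu$ are real and each does exclude exactly one component, but the stated correspondence (``respectively'') is scrambled --- $\beta L$ lies in $\langle s,t\rangle^2\cap\langle\beta,v\rangle\cap\langle L,u\rangle$ but \emph{not} in $\langle u,v\rangle$, so it certifies that $\langle u,v\rangle$ is needed, while $uv$ certifies $\langle s,t\rangle^2$, $\beta v$ certifies $\langle L,u\rangle$, and $sLu$ certifies $\langle\beta,v\rangle$. Since the lemma only claims the decomposition (not irredundancy), this does not affect the result, but it is worth fixing if the paragraph is retained.
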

\begin{proof}
After a suitable change of coordinates, $W = \spn \{sLv, tLv, \beta u \}$,
and writing $\beta = (a_0s+a_1t)L'(s,t)$, $I_W$ consists of the two by 
two minors of 
\[
\phi_2 = 
\left[ \!
\begin{array}{cc}
t & a_0uL'\\
-s & a_1uL' \\
0 &L v
\end{array}\! \right].
\]
Hence, the minimal free resolution of $I_W$ is
\[
0 \longleftarrow I_W \longleftarrow (-2,-1)^3 \stackrel{\phi_2}{\longleftarrow} 
 (-3,-1) \oplus (-3,-2) \longleftarrow 0
\]
For the primary decomposition, since $L$ does not divide $\beta$, 
\[
\langle \beta, v \rangle \cap \langle L, u \rangle = \langle \beta L, vL, \beta u, vu \rangle,
\]
and intersecting this with $\langle s,t\rangle^2 \cap \langle u,v \rangle$
gives $I_W$.
\end{proof}

\begin{lem}\label{Type4L2}
If $U$ is basepoint free, $W = \spn \{sLv, tLv, \beta u \}$ and 
$p_3 = \alpha u - \beta v = tL u - \beta v$, then
\[
I_W : p_3 = \langle \beta L, vL, \beta u, vu \rangle.
\]
\end{lem}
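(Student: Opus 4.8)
The plan is to compute $I_W : p_3$ by distributing the colon operation over the decomposition of $I_W$ recorded in Lemma~\ref{Type4L1}. Writing $I_W = \langle s,t\rangle^2 \cap \langle u,v\rangle \cap \langle\beta,v\rangle \cap \langle L,u\rangle$ and using $(J_1\cap J_2):f = (J_1:f)\cap(J_2:f)$, it suffices to analyze the four colon ideals $\langle s,t\rangle^2:p_3$, $\langle u,v\rangle:p_3$, $\langle\beta,v\rangle:p_3$, $\langle L,u\rangle:p_3$ separately. Since $p_3 = tLu - \beta v$ with $tL$ and $\beta = (a_0s+a_1t)L'(s,t)$ both products of linear forms in $s,t$, and hence both in $\langle s,t\rangle^2$, while $tLu\in\langle u\rangle$ and $\beta v\in\langle v\rangle$, the element $p_3$ already lies in $\langle s,t\rangle^2 \cap \langle u,v\rangle$; so the first two colons equal $R$ and may be discarded.

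For $\langle\beta,v\rangle:p_3$, I would first note that $\beta v\in\langle\beta,v\rangle$, so this colon equals $\langle\beta,v\rangle:tLu$, and then argue that the image of $tLu$ is a nonzerodivisor on $R/\langle\beta,v\rangle \cong k[s,t,u]/\langle\beta\rangle$, which makes the colon equal to $\langle\beta,v\rangle$. Since $k[s,t,u]$ is a UFD and $u\nmid\beta$, the nonzerodivisor property reduces to $\gcd(tL,\beta)=1$ in $k[s,t]$, i.e.\ to the statement that neither $t$ nor $L$ divides $\beta$. That $L\nmid\beta$ is already part of the setup of Lemma~\ref{Type4L1}. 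That $t\nmid\beta$ is the one place where basepoint freeness is genuinely used: evaluating the generators $sLv$, $tLv$, $\beta u$, $p_3 = tLu-\beta v$ of $I_U$ at the point $([1:0],[1:0])$, the first two vanish because of the factor $v$, and $p_3$ vanishes because $tLu$ carries the factor $t$ and $\beta v$ the factor $v$, so that $\beta u$ evaluates to the $s^2$-coefficient of $\beta$; if $t\mid\beta$ this coefficient is zero and $([1:0],[1:0])$ becomes a base point of $I_U$, a contradiction. An entirely parallel and easier argument handles $\langle L,u\rangle:p_3$: here $tLu\in\langle L,u\rangle$, so the colon equals $\langle L,u\rangle:\beta v$, and since $R/\langle L,u\rangle \cong k[s,t,v]/\langle L\rangle$ is a domain in which $\beta v$ is nonzero (again because $L\nmid\beta$), this colon is $\langle L,u\rangle$.

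Assembling the pieces gives $I_W:p_3 = \langle\beta,v\rangle \cap \langle L,u\rangle$, and the identity $\langle\beta,v\rangle\cap\langle L,u\rangle = \langle\beta L, vL, \beta u, vu\rangle$ — already established in the proof of Lemma~\ref{Type4L1} and valid precisely because $L\nmid\beta$ — yields the claimed ideal. I expect the only substantive step to be the nonzerodivisor claim for $tLu$ modulo $\langle\beta,v\rangle$, and within it the verification that $t\nmid\beta$ via the base point at $([1:0],[1:0])$; the remainder is the routine bookkeeping of tracking which monomial of $p_3$ lies in which component of $I_W$. As a sanity check one can also confirm the inclusion $\langle\beta L, vL, \beta u, vu\rangle \subseteq I_W:p_3$ directly, since after observing that $\beta Lv\in I_W$ each of $\beta L\cdot p_3$, $vL\cdot p_3$, $\beta u\cdot p_3$, $vu\cdot p_3$ visibly lands in $\langle sLv, tLv, \beta u\rangle$.
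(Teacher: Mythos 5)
Your proof is correct and follows the same route as the paper: distribute the colon over the decomposition from Lemma~\ref{Type4L1}, observe that $p_3$ already lies in $\langle s,t\rangle^2 \cap \langle u,v\rangle$, and then show that the remaining two colons return the ideals themselves by a nonzerodivisor argument. In fact you supply a detail the paper only asserts: the paper simply states that $\alpha = tL$ and $\beta$ are relatively prime, whereas you reduce this to $t \nmid \beta$ (given $L \nmid \beta$ from Lemma~\ref{Type4L1}) and derive it from basepoint freeness by evaluating the four generators of $I_U$ at $([1:0],[1:0])$; this is the right justification and a genuine improvement in rigor over the paper's phrasing.
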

\begin{proof}
First, our choice of $p_0$ to correspond to $C \cap L$ in 
Theorem~\ref{Type4resoln} means we may write $\alpha = tL$.
Since $\langle s,t \rangle^2: p_3 = 1 = \langle u,v \rangle : p_3$, 
\[
I_W : p_3 =   (\cap_{i=1}^4 I_i):p_3 = (\langle \beta, v \rangle:p_3) \cap 
(\langle L, u \rangle : p_3).
\]
Since $p_3 = tL u - \beta v$, $fp_3 \in \langle \beta, v \rangle$ iff 
$f tLu \in \langle \beta, v \rangle$. Since $tL=\alpha$ and 
$\alpha, \beta$ and $u,v$ are relatively prime, this implies $f \in \langle \beta, v \rangle$. The same argument shows that $\langle L, u \rangle : p_3$ must equal $\langle L, u \rangle$.
\end{proof}

\begin{thm}\label{Type4MC}
In the situation of Theorem~\ref{Type4resoln}, the minimal 
free resolution is of Type 4. If $p_0$ corresponds to
$L \cap C$, $p_1\ne p_0$ to another point on $L$ and  $p_2 \ne p_0$ 
to a point on $C$ and $W = \spn \{p_0,p_1,p_2 \}$, then the minimal
free resolution is given by the mapping cone of $I_W$ and $I_W : p_3$.
\end{thm}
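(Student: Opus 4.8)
The plan is to use the short exact sequence
\[
0 \longrightarrow \bigl(R/(I_W:p_3)\bigr)(-2,-1) \xrightarrow{\ \cdot p_3\ } R/I_W \longrightarrow R/I_U \longrightarrow 0,
\]
which is valid since $W \subseteq U$ forces $I_W \subseteq I_U$ and $I_U = I_W + \langle p_3 \rangle$, with $p_3$ part of a minimal generating set because $\dim_k U = 4$. Writing $P_\bullet$ for the minimal free resolution of $R/I_W$ and $P'_\bullet$ for that of $R/(I_W:p_3)$, I would lift multiplication by $p_3$ to a degree-zero comparison map $\psi_\bullet \colon P'_\bullet(-2,-1) \to P_\bullet$ and form the mapping cone, so that $\mathrm{Cone}(\psi)_n = P'_{n-1}(-2,-1)\oplus P_n$ is a free resolution of $R/I_U$. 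The goal is to show this cone is already minimal and that its graded free modules are exactly those of Type~4; that proves both assertions of the theorem at once (and, as a bonus, records all the differentials).

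The two inputs are available. By Lemma~\ref{Type4L1}, with $W = \spn\{sLv, tLv, \beta u\}$, the resolution $P_\bullet$ is the Hilbert--Burch complex $0 \leftarrow R \leftarrow R(-2,-1)^3 \xleftarrow{\phi_2} R(-3,-1)\oplus R(-3,-2) \leftarrow 0$. By Lemma~\ref{Type4L2}, $I_W:p_3 = \langle\beta L, vL, \beta u, vu\rangle = \langle\beta,v\rangle \cap \langle L,u\rangle$, with $p_3 = tLu-\beta v$; since $\beta,v,L,u$ is a regular sequence, $L,u$ is a regular sequence on $R/\langle\beta,v\rangle$, so this is a transverse intersection of two complete intersections, and a direct computation gives the minimal free resolution
\[
0 \to R(-3,-2) \to R(-3,-1)^2 \oplus R(-2,-2) \oplus R(-1,-2) \to R(-3,0)\oplus R(-2,-1)\oplus R(-1,-1)\oplus R(0,-2) \to R \to R/(I_W:p_3) \to 0,
\]
in which the four first syzygies $\sigma_1,\dots,\sigma_4$ are the Koszul-type relations $u\cdot\beta L-L\cdot\beta u$, $v\cdot\beta L-\beta\cdot vL$, $v\cdot\beta u-\beta\cdot vu$, $u\cdot vL-L\cdot vu$ among the generators, and the single second syzygy is $v\sigma_1-u\sigma_2+L\sigma_3-\beta\sigma_4$. (Alternatively one can derive this from $0\to R/(A\cap B)\to R/A\oplus R/B\to R/(A+B)\to 0$ with $A=\langle\beta,v\rangle$, $B=\langle L,u\rangle$, since $A+B$ is a complete intersection.) Verifying exactness of this length-three complex---via Buchsbaum--Eisenbud after checking the depths of the ideals of minors, or from the short exact sequence after cancelling the redundant free summands in the resulting cone---is the step I expect to require the most care; everything else is bookkeeping.

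The remaining point is minimality of the cone. For each $n$ one must check that $\psi_n\colon P'_n(-2,-1)\to P_n$ has all entries in $\mm$. But the graded shifts occurring in $P'_n(-2,-1)$ and in $P_n$ are disjoint in every homological degree: in degree $1$ one compares $\{(-5,-1),(-4,-2),(-3,-2),(-2,-3)\}$ with $\{(-2,-1)\}$, in degree $2$ one compares $\{(-5,-2),(-5,-2),(-4,-3),(-3,-3)\}$ with $\{(-3,-1),(-3,-2)\}$, and in degrees $0,3,4$ each side is zero or a single, different, shift. Since $\psi$ is homogeneous of degree $0$, an entry between summands $R(a,b)$ and $R(c,d)$ with $(a,b)\neq(c,d)$ cannot be a unit, so every $\psi_n$ has entries in $\mm$ and $\mathrm{Cone}(\psi)$ is a minimal free resolution of $R/I_U$. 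Reading off $\mathrm{Cone}(\psi)_n = P'_{n-1}(-2,-1)\oplus P_n$ then yields $R(-2,-1)^4$ in homological degree $1$, $R(-2,-3)\oplus R(-3,-1)\oplus R(-3,-2)^2\oplus R(-4,-2)\oplus R(-5,-1)$ in degree $2$, $R(-3,-3)\oplus R(-4,-3)\oplus R(-5,-2)^2$ in degree $3$, and $R(-5,-3)$ in degree $4$---exactly the Type~4 entry of Table~\ref{T2}---which completes the proof.
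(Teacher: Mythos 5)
Your proof is correct and follows essentially the same route as the paper: form the mapping cone of the short exact sequence relating $R/I_U$, $R/I_W$, and $R(-2,-1)/(I_W:p_3)$, compute the two input resolutions from Lemmas~\ref{Type4L1} and~\ref{Type4L2}, and conclude minimality by comparing shifts. The only cosmetic difference is that the paper obtains the resolution of $R/(I_W:p_3)$ by invoking the Type~6 pattern from Proposition~\ref{LS2}, whereas you derive it from the transverse intersection $\langle\beta,v\rangle \cap \langle L,u\rangle$, and you spell out the minimality check (degree-zero maps between summands of distinct bidegree cannot be units) which the paper states only as ``a check shows there are no overlaps.''
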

\begin{proof}
We construct a mapping cone resolution from the short exact sequence
\[
0 \longleftarrow R/I_U  \longleftarrow   R/I_W  \stackrel{\cdot p_3} \longleftarrow R(-2,-1)/I_W:p_3 \longleftarrow 0.
\]
By Lemma~\ref{Type4L2}, 
\[
I_W : p_3 = \langle \beta L, Lv, \beta u, uv \rangle,
\]
which by the reasoning in the proof of Proposition~\ref{LS2} 
has minimal free resolution:
\begin{small}
\[
I_W : p_3 \longleftarrow 
\begin{array}{c}
(-3,-0)\\
\oplus \\
(-1,-1)\\
\oplus \\
(-2,-1)\\
\oplus \\
(0,-2)
\end{array}\! \xleftarrow{\left[ \!
\begin{array}{cccc}
v      & 0      &u  &0 \\
-\beta & 0      &0  &u \\
0      & v      &-L &0   \\
0      & -\beta &0  & -L
\end{array}\! \right]} 
\begin{array}{c}
(-3,-1)\\
\oplus \\
(-2,-2)\\
\oplus \\
(-3,-1)\\
\oplus \\
(-1,-2)
\end{array}\!
\xleftarrow{\left[ \!\begin{array}{c}
u\\
-L\\
-v \\
\beta
\end{array}\! \right]}
 (-3,-2).
\]
\end{small}
A check shows that there are no overlaps in the mapping cone shifts,
hence the mapping cone resolution is actually minimal.
\end{proof}

\subsection{Type 3 resolution}
Finally, suppose $I_U=\langle p_0, p_1, p_2, p_3 \rangle$ with 
$p_0= ps$ and  $p_1=pt$, such that $p=a_0su+a_1sv+a_2tu+a_3tv$ is 
irreducible, so $a_0a_3-a_1a_2 \not = 0$. 
As in the case of Type 4, the
minimal free resolution will be given by a mapping cone. However, in Type 3
the construction is more complicated: we will need two mapping cones to 
compute the resolution. What is surprising is that by a judicious 
change of coordinates, the bigrading allows us to reduce $I_U$ so that
the equations have a very simple form. 
\begin{thm}\label{T3res}
If $U$ is basepoint free and $I_U =\langle ps, pt, p_2, p_3 \rangle$ 
with $p$ irreducible, then the $I_U$ has a mapping cone resolution,
and is of numerical Type 3. 
\end{thm}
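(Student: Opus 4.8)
The plan is to follow the same mapping-cone strategy that worked for Type 4 in Theorem~\ref{Type4MC}, but now with two mapping cones rather than one, exploiting the bigraded structure to put the generators in a normal form. First I would normalize the ideal: since $p$ is irreducible with $a_0a_3-a_1a_2\neq 0$, I would perform a linear change of coordinates on $s,t$ and on $u,v$ (together with replacing $p_2,p_3$ by $R$-linear combinations modulo $ps,pt$) to reduce $p$ and the generators to the simplest possible shape. The computation in the proof of Lemma~\ref{LS4-10} (Case 1, irreducible $p$) already does most of this: after scaling $a_0=1$ we may take $sp$ and $tp$ to eliminate the $s^2u$ and $stu$ terms from $p_2,p_3$, so that
\[
p_2 = b_0t^2u+b_1s^2v+b_2stv+b_3t^2v,\qquad p_3 = c_0t^2u+c_1s^2v+c_2stv+c_3t^2v,
\]
and further coordinate changes on $u,v$ and among $p_2,p_3$ should kill more terms. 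I expect to reduce to a presentation where $p_2,p_3$ have essentially two or three monomials each, so that $\langle p_0,p_1,p_2\rangle$ has a recognizable primary decomposition and Hilbert--Burch resolution, exactly paralleling Lemmas~\ref{Type4L1} and~\ref{Type4L2}.

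Next I would build the resolution in two stages via short exact sequences
\[
0 \longleftarrow R/I_U \longleftarrow R/\langle p_0,p_1,p_2\rangle \xleftarrow{\ \cdot p_3\ } R(-2,-1)/\big(\langle p_0,p_1,p_2\rangle:p_3\big) \longleftarrow 0,
\]
and similarly, if needed, peel off $p_2$ from $\langle p_0,p_1\rangle$ first. The subideal $\langle p_0,p_1\rangle=\langle ps,pt\rangle=p\cdot\langle s,t\rangle$ is a complete intersection twisted by $p$, with an obvious Koszul resolution, and $\langle ps,pt\rangle:p_2$ should be computable directly from the normal form (it will be an ideal like $\langle s,t\rangle$ plus a few extra generators coming from the syzygies in Lemma~\ref{LS4-10}). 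Then a second mapping cone adds $p_3$. At each stage I would write down the colon ideal explicitly, resolve it (these will be ideals resolved by short Koszul-type or Hilbert--Burch complexes, as in the proof of Proposition~\ref{LS2}), and form the mapping cone; the bigraded Betti numbers of the result are read off by adding the shifts from the two complexes in the cone.

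The main obstacle, and the reason Type 3 is genuinely harder than Type 4, is minimality of the mapping cone: as the excerpt warns, mapping cone resolutions need not be minimal, and here the shifts in the resolutions of the colon ideal and the subideal can overlap, forcing cancellations. So after assembling the (possibly non-minimal) mapping cone I would have to identify all unit entries in the differentials and cancel the corresponding free summands, then check that what remains has exactly the ranks and shifts of Type 3 in Table~\ref{T2}. The bookkeeping here—tracking which $(i,j)$ summands cancel against which—is the delicate part; I would organize it by comparing the explicit differential matrices (which the normal form makes manageable) degree by degree, and use the fact that the Betti numbers must be consistent with the Hilbert series of $R/I_U$ (which is forced, since $X_U$ is a surface in $\P^3$) as an independent check that no further cancellation is possible. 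Finally, having a non-minimal free resolution of the right length automatically certifies, via the Buchsbaum--Eisenbud criterion as used in Theorem~\ref{T5exact} (or simply because it is a mapping cone of resolutions), that the complex is exact, so the remaining content is purely the minimality reduction.
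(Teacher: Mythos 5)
Your proposal matches the paper's strategy: normalize $p$ and the generators by changes of coordinates (following the Lemma~\ref{LS4-10} calculation), choose a three-generator subideal $I_W$ with a Hilbert--Burch resolution, compute $I_W:p_3$ via a primary decomposition, and assemble the resolution of $I_U$ as a mapping cone. Two of your expectations are slightly off, though neither would derail the argument. First, $\langle ps,pt\rangle:p_2$ is not ``$\langle s,t\rangle$ plus a few extra generators''; since $p$ is an irreducible nonzerodivisor not dividing $p_2$, and every bidegree-$(2,1)$ form lies in $\langle s,t\rangle$, this colon ideal is simply $\langle p\rangle$, so the first peeling step is trivial and gives exactly the Hilbert--Burch resolution the paper writes down directly for $I_W$. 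Second, you identify cancellation in the mapping cone as the main obstacle; in fact, with the paper's normalization (to generators $\langle s(su+tv), t(su+tv), stQ_1, s^2Q_1+t^2Q_2\rangle$), the outer mapping cone is already minimal -- the shifts assemble to Type~3 with no cancellation. The genuine extra difficulty of Type~3 over Type~4 lies elsewhere: $I_W:p_3$ is not itself Hilbert--Burch. Its resolution has length three and requires a second, nested mapping cone (of $I_3(\phi)$ with $I_3(\phi):p$) or, as the paper prefers, a direct computation made tractable only by the careful normal form. So the work is front-loaded into the coordinate normalization and the explicit primary decomposition of $I_W$ rather than a post-hoc cancellation analysis.
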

\begin{proof}
Without loss of generality, 
assume $a_0  = 1$.  Reducing $p_2$ and $p_3$ mod  $ps$ and $pt$, 
we have 
$$
\begin{array}{ccc}
p_2 & = & b_0t^2u+b_1s^2v+b_2stv+b_3t^2v\\
p_3 & = & c_0t^2u+c_1s^2v+c_2stv+c_3t^2v
\end{array}
$$
Since $U$ is basepoint free, either $b_0$ or $c_0$ is nonzero, so 
after rescaling and reducing $p_3$ mod $p_2$ 
\[
\begin{array}{ccr}
p_2 & = & t^2u+b_1s^2v+b_2stv+b_3t^2v\\
p_3 & = & c_1s^2v+c_2stv+c_3t^2v \\
& = & (c_1s^2+c_2st+c_3t^2)v\\
& = & L_1L_2v\\
\end{array}
\]
for some $L_i \in R_{1,0}$. If the $L_i$'s are linearly independent,
then a change of variable replaces $L_1$ and $L_2$ with $s$ and
$t$. This transforms $p_0, p_1$ to $p'l_1, p'l_2$, but since the $l_i$'s are 
linearly independent linear forms in $s$ and $t$, 
$\langle p'l_1, p'l_2 \rangle = \langle p's, p't \rangle$, with
$p'$ irreducible. So we may assume
\[I_U=\langle ps, pt, p_2, p_3 \rangle, \mbox{ where } p_3 = stv \mbox{ or } s^2v.
\]
With this change of variables,
\[
p_2  =  l^2u+(b'_1s^2+b'_2st+b'_3t^2)v
\]
where $l=as+bt$ and $b \ne 0$. We now analyze the two
possible situations. First, suppose $p_3 = stv$.
Reducing $p_2$ modulo $\langle ps, pt,stv \rangle$  
yields
\[
\begin{array}{ccr}
p_2  & = &  \alpha t^2u+b''_1s^2v+b''_3t^2v\\
  & =& (\alpha u+b''_3v)t^2+b''_1s^2v
\end{array}
\]
By basepoint freeness, $\alpha \not = 0$, so changing variables 
via $\alpha u+b''_3v \mapsto u$ yields $p_2 = t^2u+b''_1s^2v$. 
Notice this change of variables does not change the form of the 
other $p_i$. Now $b_1'' \ne 0$ by basepoint freeness, so 
rescaling $t$ (which again preserves the form of the other $p_i$)
shows that
\[
I_U=\langle ps, pt, t^2u+s^2v, stv \rangle.
\]
Since $p$ is irreducible, $p=sl_1+tl_2$ with $l_i$ are linearly 
independent elements of $R_{0,1}$. Changing variables once again via
$l_1 \mapsto u$ and $l_2 \mapsto v$, we have
\[
I_U=\langle s(su+tv), t(su+tv), stQ_1, s^2Q_1+ t^2Q_2 \rangle,
\]
where $Q_1=au+bv, Q_2=cu+dv$ are linearly independent with $b \ne 0$. 
Rescaling $v$ and $s$ we may assume $b=1$. Now let
\[
I_W=\langle s(su+tv), t(su+tv), stQ_1 \rangle.
\]
The minimal free resolution of $I_W$ is
\[
0 \longleftarrow I_W \longleftarrow (-2,-1)^3 \stackrel{\begin{small}\left[ \!
\begin{array}{cc}
t & 0\\
-s & sQ_1 \\
0 &p
\end{array}\! \right]\end{small}}{\longleftarrow} 
 (-3,-1) \oplus (-3,-2) \longleftarrow 0,
\]
To obtain a mapping cone resolution, we need to compute $I_W:p_2$. 
As in the Type 4 setting, we first find the primary decomposition for $I_W$.
\begin{enumerate}
\item   If $a = 0$, then 
$$
\begin{array}{cll}
I_W&=& \langle s(su+tv), t(su+tv), stv \rangle\\
&=&\langle u, v \rangle \cap \langle s, t \rangle^2 \cap \langle u, t \rangle \cap \langle su+tv, (s, v)^2 \rangle = \cap_{i=1}^4 I_i
\end{array}
$$
\item  If  $a \ne 0$, rescale $u$ and $t$ by $a$ so $Q_1=u+v$. Then
\[
\begin{array}{cll}
I_W&=& \langle s(su+tv), t(su+tv), st(u+v) \rangle\\
&=&\langle u, v \rangle \cap \langle s, t \rangle^2 \cap \langle v, s \rangle \cap \langle u, t \rangle \cap \langle u+v, s-t \rangle = \cap_{i=1}^5 I_i
\end{array}
\]
\end{enumerate}
Since $s^2Q_1+ t^2Q_2 \in I_1 \cap I_2$ in both cases, 
$I_W: s^2Q_1+ t^2Q_2 = \cap_{i=2}^n I_i$. So if $a=0$, $I_W: s^2Q_1+ t^2Q_2 $
\[
\begin{array}{ccl}
& = &  \langle u,t \rangle : s^2Q_1+ t^2Q_2 \ \cap \langle su+tv, (s, v)^2 \rangle : s^2Q_1+ t^2Q_2 \\
 &=&  \langle u,t \rangle \cap \langle su+tv, (s, v)^2 \rangle\\
 &=&\langle su+tv, uv^2, tv^2, stv, s^2t \rangle\\
 &=& \langle su+tv, I_3(\phi)  \rangle,
 \end{array}
 \]
while if $a \ne 0$, $I_W: s^2Q_1+ t^2Q_2$ 
\[
 \begin{array}{ccl}
& = &  \langle v,s \rangle : s^2Q_1+ t^2Q_2  \ \cap \langle u,t \rangle: s^2Q_1+ t^2Q_2 \ \cap   \langle u+v, s-t \rangle : s^2Q_1+ t^2Q_2\\
  &=&  \langle v, s \rangle \cap \langle u, t \rangle \cap \langle u+v, s-t \rangle \\
 &=&\langle su+tv, uv(u+v), tv(u+v), tv(s-t), st(s-t) \rangle\\
 &=& \langle su+tv, I_3(\phi)  \rangle
 \end{array}
\]
 where 
\[
\phi=\left[ \begin{array}{lll}  t & 0 & 0\\ u & s & 0 \\ 0 & v & s \\ 0 & 0 & v \end{array}\right] \mbox{ if } a=0, \mbox{ and } 
\phi=\left[ \begin{array}{lll}  t & 0 & 0\\ u & s-t & 0 \\ 0 & u+v & s \\ 0 & 0 & v \end{array}\right] \mbox{ if } a \ne 0 . 
\]
Since $I_3(\phi)$ has a Hilbert-Burch resolution, a resolution 
of $I_W: p_2 = \langle su+tv, I_3(\phi)\rangle$ can be obtained as 
the mapping cone of $I_3(\phi)$ with $I_3(\phi): p$. There are no 
overlaps, so the result is a minimal resolution. However, there is 
no need to do this, because the change of variables allows us to 
do the computation directly and  we find
\[
0 \leftarrow I_w:p_3 \longleftarrow \begin{array}{c}
(-1,-1)\\
 \oplus \\
 (0,-3)\\
 \oplus \\
(-1,-2)\\
\oplus \\
(-2,-1)\\
\oplus \\
(-3,-0)\\
\end{array} \longleftarrow 
\begin{array}{c}
 (-1,-3)^2\\
 \oplus \\
(-2,-2)^2\\
\oplus \\
(-3,-1)^2\\
 \end{array} \longleftarrow 
\begin{array}{c}
 (-2,-3)\\
 \oplus \\
(-3,-2)\\
 \end{array} \leftarrow 0 
\]
This concludes the proof if $p_3 = stv$. When $p_3 = s^2v$, the argument 
proceeds in similar, but simpler, fashion.
\end{proof}
 
\section{No linear first syzygies}
\subsection{Hilbert function}

\begin{prop}\label{hilb}
If $U$ is basepoint free, then there are six types of bigraded Hilbert function in one-to-one correspondence with the resolutions of Table 2. The tables below contain the values of $h_{i,j}=HF((i,j),R/I_U)$, for $i<5,j<6$ listed in the order corresponding to the six numerical types in Table~\ref{T2}. 

\begin{small}
$$
\begin{matrix}

\begin{array}{c|ccccc}
   & 0 & 1 & 2 & 3 & 4 \\
   \hline
  0 & 1 & 2 & 3 & 4 & 5 \\
1 & 2 & 4 & 6 & 8 & 10 \\
2 & 3 & 2 & 1 & 0 & 0 \\
3 & 4 & 0 & 0 & 0 &0 \\
4 & 5 & 0 & 0 & 0 &0 \\
5 & 6 & 0 & 0 & 0 &0 \\
\end{array}
&
\begin{array}{c|ccccc}
   & 0 & 1 & 2 & 3 & 4 \\
   \hline
  0 & 1 & 2 & 3 & 4 & 5 \\
1 & 2 & 4 & 6 & 8 & 10 \\
2 & 3 & 2 & 1 & 1 & 1 \\
3 & 4 & 0 & 0 & 0 &0 \\
4 & 5 & 0 & 0 & 0 &0 \\
5 & 6 & 0 & 0 & 0 &0 \\
\end{array}
&
\begin{array}{c|ccccc}
   & 0 & 1 & 2 & 3 & 4 \\
   \hline
  0 & 1 & 2 & 3 & 4 & 5 \\
1 & 2 & 4 & 6 & 8 & 10 \\
2 & 3 & 2 & 1 & 0 & 0 \\
3 & 4 & 1 & 0 & 0 &0 \\
4 & 5 & 0 & 0 & 0 &0 \\
5 & 6 & 0 & 0 & 0 &0 \\
\end{array}
\\ \\
\begin{array}{c|ccccc}
   & 0 & 1 & 2 & 3 & 4 \\
   \hline
  0 & 1 & 2 & 3 & 4 & 5 \\
1 & 2 & 4 & 6 & 8 & 10 \\
2 & 3 & 2 & 1 & 1 & 1 \\
3 & 4 & 1 & 0 & 0 &0 \\
4 & 5 & 0 & 0 & 0 &0 \\
5 & 6 & 0 & 0 & 0 &0 \\
\end{array}
&
\begin{array}{c|ccccc}
   & 0 & 1 & 2 & 3 & 4 \\
   \hline
  0 & 1 & 2 & 3 & 4 & 5 \\
1 & 2 & 4 & 6 & 8 & 10 \\
2 & 3 & 2 & 2 & 2 & 2 \\
3 & 4 & 0 & 0 & 0 &0 \\
4 & 5 & 0 & 0 & 0 &0 \\
5 & 6 & 0 & 0 & 0 &0 \\
\end{array}
&
\begin{array}{c|ccccc}
   & 0 & 1 & 2 & 3 & 4 \\
   \hline
  0 & 1 & 2 & 3 & 4 & 5 \\
1 & 2 & 4 & 6 & 8 & 10 \\
2 & 3 & 2 & 3 & 4 & 5 \\
3 & 4 & 0 & 0 & 0 &0 \\
4 & 5 & 0 & 0 & 0 &0 \\
5 & 6 & 0 & 0 & 0 &0 \\
\end{array}
\end{matrix}
$$
\end{small}
\end{prop}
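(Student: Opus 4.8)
The plan is to compute the bigraded Hilbert function of $R/I_U$ directly from the six minimal free resolutions in Table~\ref{T2}, using additivity of the Hilbert function along an exact sequence. For a bigraded complex $0 \leftarrow I_U \leftarrow F_0 \leftarrow F_1 \leftarrow \cdots$ with each $F_k = \bigoplus R(a_{k\ell}, b_{k\ell})$, one has
\[
\HP(R/I_U; i,j) = \HP(R; i,j) - \sum_{k \ge 0} (-1)^k \sum_\ell \HP\bigl(R; i+a_{k\ell}, j+b_{k\ell}\bigr),
\]
where $\HP(R; i,j) = (i+1)(j+1)$ for $i,j \ge 0$ and $0$ otherwise, since $R = k[s,t;u,v]$ with the displayed bigrading. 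So the first step is simply to record, for each of the six types, the list of shifts $R(a,b)$ appearing in $F_0, F_1, F_2, F_3$ (these are read off verbatim from Table~\ref{T2}), and then evaluate the alternating sum above at each $(i,j)$ with $0 \le i < 5$, $0 \le j < 6$. This produces the six $6 \times 5$ tables displayed in the statement; the entries for $i \ge 5$ or $j \ge 6$ (not tabulated) are obtained the same way. One should also note that the first two rows ($i = 0, 1$) agree across all six types and coincide with $\HP(R; i,j)$ restricted to $R_{i,j}$ for $i \le 1$ — indeed $I_U$ is generated in bidegree $(2,1)$, so $(R/I_U)_{i,j} = R_{i,j}$ whenever $i \le 1$, which is a useful sanity check.

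The one genuine subtlety is the phrase \emph{``in one-to-one correspondence with the resolutions of Table~2.''} The alternating-sum computation shows that each resolution type forces a specific Hilbert function; what must additionally be argued is that the six Hilbert functions are pairwise distinct, so that the Hilbert function determines (and is determined by) the numerical type. Inspecting the tables, types $1$ and $3$ agree except at $(i,j) = (3,1)$ (value $0$ versus $1$), types $2$ and $4$ likewise differ only at $(3,1)$, types $1$ and $2$ differ in row $i=2$ at $j \ge 3$, and types $5$ and $6$ are visibly distinct from each other and from the rest in row $i=2$. So the $6 \times 5$ window tabulated already separates all six cases — this is exactly why the proposition restricts attention to $i < 5$, $j < 6$. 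Note that types 5a and 5b share both the resolution (numerical Type 5) and hence the Hilbert function; the bijection is with the six \emph{numerical} types, consistent with the discussion after Table~\ref{T1}.

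The only step requiring a little care is the claim that every basepoint-free $U$ falls into one of the six types — but this is not part of the present proposition; it is the content of the classification assembled from Propositions~\ref{LS3}, \ref{LS2}, \ref{LS3-10}, Theorem~\ref{AllLins}, and the forthcoming analysis in \S 6 of the case with no linear syzygies. Here we may simply invoke that classification and restrict to verifying, resolution by resolution, that the Euler-characteristic computation yields the stated tables. Thus the main obstacle is essentially bookkeeping: correctly transcribing the many shifts in the Type 1, 3, and 4 resolutions (which have the longest free modules) and evaluating $(i+1)(j+1)$ with the correct truncation at the boundary of the positive quadrant. I do not expect any conceptual difficulty — the proof is a finite, mechanical check once the additivity formula is set up, together with the observation that the tabulated entries suffice to distinguish the types.
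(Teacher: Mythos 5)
The mechanical content of your proposal --- computing the bigraded Hilbert function of $R/I_U$ by alternating sums over the shifts in a given resolution --- is correct and is indeed how the paper verifies the entries for Types 3--6, whose resolutions were already derived in \S 3--4. The issue is the logical dependence for Types 1 and 2. You write that the completeness of Table~\ref{T2} ``is the content of the classification assembled from [earlier results] and the forthcoming analysis of the case with no linear syzygies,'' and propose to simply invoke that classification and compute. But in the paper the Type~1 and Type~2 resolutions are \emph{derived} (Theorem~\ref{T1T2res}) from the Hilbert function analysis, not the other way around: the paper first proves Lemma~\ref{ABlemma} (relating $h_{3,1}$ and $h_{2,2}$ to the number of linear first syzygies), shows $h_{3,2}=0$ in all cases, and then in Proposition~\ref{HFnoLin} shows there are exactly two possible Hilbert functions when no linear syzygies exist. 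Those two Hilbert functions are then fed into bigeneric initial ideals and the cancellation principle to pin down the Type~1 and Type~2 resolutions. So ``invoke the classification, then compute the Hilbert function'' reverses the actual direction of argument for those two types and is circular as stated.

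Put differently: for Types 3--6 your Euler-characteristic computation is a genuine proof, because those resolutions were constructed independently in \S 3--4. For Types 1--2 the proposition is not a corollary of Table~\ref{T2}; rather, it is the key lemma en route to Table~\ref{T2}. A correct proof along the lines you sketch would need to first establish directly (without assuming the Type 1 or 2 resolution) that in the absence of linear first syzygies $h_{3,1}=0$, $h_{2,2}=1$, $h_{3,2}=0$, and $h_{2,j}\in\{0,1\}$ is constant for $j\ge3$ --- which is precisely what Lemma~\ref{ABlemma} and Proposition~\ref{HFnoLin} do --- and only then observe that these Hilbert functions force the Type 1 and Type 2 numerics via the bigeneric initial ideal analysis. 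The distinctness and the one-to-one correspondence you argue at the end is fine, but it rests on the unproved premise.
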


The entries of the first two rows and the first column are clear: $$h_{0,j}=HF((0,j),R)=j+1,h_{1,j}=HF((1,j),R)=2j+2$$ and $h_{i,0}=HF((i,0),R)=i+1$. Furthermore $h_{2,1}=2$ by the linear independence of the minimal generators of $I_U$. The proof of the proposition is based on the following lemmas concerning  $h_{i,j}, i\geq 2, j\geq 1$.

\begin{lem} \label{ABlemma}
For $I_U$ an ideal generated by four independent forms of bidegree $(2,1)$
\begin{enumerate}
\item $h_{3,1}$ is the number of bidegree $(1,0)$ first syzygies
\item $h_{2,2}-1$ is the number of bidegree $(0,1)$ first syzygies
\end{enumerate}
\end{lem}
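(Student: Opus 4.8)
The plan is to compute $h_{i,j} = \dim_k (R/I_U)_{i,j}$ for the two relevant bidegrees $(3,1)$ and $(2,2)$ directly from the beginning of a minimal free resolution, using that $\dim_k R_{i,j} = (i+1)(j+1)$ and that the first syzygy module $\beta_1$ (generated in the bidegrees tabulated in Table~2) controls the defect. Write the start of the minimal free resolution as $R^4(-2,-1) \xleftarrow{\phi_1} F_1 \leftarrow \cdots \twoheadrightarrow I_U$, so that in bidegree $(i,j)$ we have the exact sequence $(F_2)_{i,j} \to (F_1)_{i,j} \to R^4(-2,-1)_{i,j} \to (I_U)_{i,j} \to 0$, and hence $\dim_k (I_U)_{i,j} = 4\dim_k R_{i-2,j-1} - \dim_k (\mathrm{Syz}_1)_{i,j}$, where $(\mathrm{Syz}_1)_{i,j}$ is the degree-$(i,j)$ piece of the first syzygy module $\ker\phi_1$.

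First I would handle (1). In bidegree $(3,1)$ the generators contribute $4\dim_k R_{1,0} = 4\cdot 2 = 8$, while $\dim_k R_{3,1} = 4\cdot 2 = 8$, so $h_{3,1} = \dim_k(R/I_U)_{3,1} = 8 - \dim_k(I_U)_{3,1} = \dim_k(\mathrm{Syz}_1)_{3,1}$. Now the point is that the only first syzygies whose degree can sit in bidegree $(3,1)$ are those of bidegree $(1,0)$ (a syzygy of bidegree $(a,b)$ on generators of bidegree $(2,1)$ has "total" degree $(a{+}2, b{+}1)$, and the minimal first syzygies occurring in Table~2 are of bidegrees $(0,1)$, $(1,0)$, $(0,2)$, $(2,0)$, $(1,1)$; none of the higher ones can contribute to the $(3,1)$-graded piece since e.g. a bidegree $(0,1)$ syzygy lives in bidegree $(2,2)$ and a bidegree $(2,0)$ syzygy in bidegree $(4,1)$). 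A bidegree $(1,0)$ syzygy $\sum(a_is+b_it)p_i = 0$ has no $R$-multiples of strictly smaller degree, and distinct minimal such syzygies stay linearly independent in bidegree $(3,1)$, so $\dim_k(\mathrm{Syz}_1)_{3,1}$ equals the number of minimal first syzygies of bidegree $(1,0)$. By Lemma~\ref{LS5-10} this number is $0$ or $1$, which is consistent with the tables, and in any case the count is exactly $h_{3,1}$.

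For (2), the same bookkeeping in bidegree $(2,2)$: the generators contribute $4\dim_k R_{0,1} = 4\cdot 2 = 8$ and $\dim_k R_{2,2} = 3\cdot 3 = 9$, so $\dim_k(I_U)_{2,2} = 8 - \dim_k(\mathrm{Syz}_1)_{2,2}$ and $h_{2,2} = 9 - \dim_k(I_U)_{2,2} = 1 + \dim_k(\mathrm{Syz}_1)_{2,2}$. Again the only minimal first syzygies that can contribute in bidegree $(2,2)$ are those of bidegree $(0,1)$, and these span a subspace of $(\mathrm{Syz}_1)_{2,2}$ of dimension equal to their number; so $h_{2,2} - 1$ is that number, giving the claim. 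The one step that needs a little care — and which I expect to be the main (though mild) obstacle — is verifying that no \emph{higher}-degree minimal syzygy, and no non-minimal $R$-multiple of a lower one, can sneak into these two graded pieces, so that $\dim_k(\mathrm{Syz}_1)_{3,1}$ and $\dim_k(\mathrm{Syz}_1)_{2,2}$ really equal the respective generator counts; this is settled by the degree shift argument above together with minimality of the resolution (a minimal syzygy of bidegree $(1,0)$ or $(0,1)$ admits no $R$-multiple in the same bidegree, and in fact $R_{0,0} = k$, so the $k$-span of the minimal generators of that degree is precisely the whole graded piece of $\mathrm{Syz}_1$).
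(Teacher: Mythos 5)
Your proposal is correct and is essentially the paper's own argument: both compute $h_{3,1}$ and $h_{2,2}$ by Hilbert-function bookkeeping at the start of the minimal free resolution, observing that in bidegrees $(3,1)$ and $(2,2)$ only the four generators and the linear first syzygies can contribute. (Your parenthetical appeal to Table~2 is unnecessary, and best avoided since this lemma feeds into that classification, but your degree-shift argument together with the linear independence of the $p_i$ already rules out every other contribution, so the proof stands without it.)
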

\begin{proof}
From the free resolution 
$$0 \leftarrow R/I_U \leftarrow R \leftarrow R(-2,-1)^4 \longleftarrow \begin{array}{c}
 R(-2,-3)^B\\
 \oplus \\
R(-3,-1)^A\\
 \oplus \\
F_1
\end{array} \longleftarrow F_2 \longleftarrow F_3\leftarrow 0,$$
we find
$$h_{3,1}=HF((3,1),R)-4HF((1,0),R)+AHF((0,0),R)=8-8+A=A$$
$$h_{2,2}=HF((2,2),R)-4HF((0,1),R)+BHF((0,0),R)=9-8+B=B+1,$$
since $F_i$ are free $R$-modules generated in 
degree $(i,j)$ with $i>3$ or $j>2$. \end{proof}

\begin{lem}
If $U$ is basepoint free, then  $h_{3,2}=0$ for every numerical type.
 \end{lem}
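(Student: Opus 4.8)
The plan is to argue by cases according to whether $I_U$ has a first syzygy of bidegree $(1,0)$; this alternative is clearly exhaustive. By Corollary~\ref{NoMix} and Lemma~\ref{LS5-10} the presence of such a syzygy means precisely that $I_U$ is of numerical Type 3 or 4, while its absence covers Types 1, 2, 5 and 6. I expect the second of these cases to be the easy one — an elementary observation about monomials — and the first to reduce to reading off the resolutions already constructed.

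Suppose first that $I_U$ has no first syzygy of bidegree $(1,0)$. Then part (1) of Lemma~\ref{ABlemma} gives $h_{3,1}=0$, that is, $(I_U)_{3,1}=R_{3,1}$. Since $I_U$ is an ideal, $(I_U)_{3,2}\supseteq R_{0,1}\cdot(I_U)_{3,1}=R_{0,1}\cdot R_{3,1}$, and the point is the identity $R_{0,1}\cdot R_{3,1}=R_{3,2}$: multiplying the monomial basis $\{\,s^at^{3-a}u,\ s^at^{3-a}v : 0\le a\le 3\,\}$ of $R_{3,1}$ by $u$ and by $v$ produces the full monomial basis $\{\,s^at^{3-a}u^cv^{2-c} : 0\le a\le 3,\ 0\le c\le 2\,\}$ of $R_{3,2}$. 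Hence $(I_U)_{3,2}=R_{3,2}$ and $h_{3,2}=0$; this argument uses nothing about $U$ besides the missing bidegree $(1,0)$ syzygy, so it covers Types 1, 2, 5 and 6 uniformly.

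In the remaining case $I_U$ has a first syzygy of bidegree $(1,0)$, so by Lemma~\ref{LS2-10} we may take $I_U=\langle ps,pt,p_2,p_3\rangle$ with $p$ of bidegree $(1,1)$, and by Proposition~\ref{LS3-10} — hence by Theorems~\ref{T3res} and \ref{Type4MC} — the bigraded minimal free resolution of $R/I_U$ has numerical Type 3 or Type 4 as displayed in Table~\ref{T2}. I would then extract $h_{3,2}$ as the alternating sum $\sum_{i\ge 0}(-1)^i\dim_k(F_i)_{3,2}$ of the free modules $F_i$ in that resolution. Inspecting the two relevant rows of Table~\ref{T2}, the only twists $R(-a,-b)$ with $a\le 3$ and $b\le 2$ — hence the only ones that are nonzero in bidegree $(3,2)$ — are $R$ itself, the four copies of $R(-2,-1)$, and the summand $R(-3,-1)\oplus R(-3,-2)^2$ of the first syzygy module; this yields $h_{3,2}=HF((3,2),R)-4\,HF((1,1),R)+HF((0,1),R)+2\,HF((0,0),R)=12-16+2+2=0$, every later twist having $a>3$ or $b>2$. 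The only thing to be careful about is that this case division is logically clean at this point of the paper, i.e. that a bidegree $(1,0)$ syzygy already forces Type 3 or 4 with resolutions in hand, so that no circularity enters; granting that, both cases are short and involve no delicate estimate.
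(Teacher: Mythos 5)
Your proof is correct and takes essentially the same approach as the paper: the case split on existence of a bidegree $(1,0)$ first syzygy, reducing the first case to $h_{3,1}=0$ (you helpfully spell out the step $R_{0,1}\cdot R_{3,1}=R_{3,2}$, which the paper leaves as "consequently"), and reading off the Type 3/4 resolution shape for the second case to get $12-16+2+2=0$; the non-circularity you worry about is fine since Theorems~\ref{T3res} and \ref{Type4MC} precede this lemma.
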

 
 \begin{proof}
 If there are no bidegree $(1,0)$ syzygies then $HF((3,1),R/I_U)=0$ and consequently $HF((3,2),R/I_U)=0$. 
 If there are bidegree  $(1,0)$ syzygies then we are in Type 3 or 4 where by Proposition \ref{LS3-10} we know the relevant part of the resolution is
$$0 \leftarrow R/I_U \leftarrow R \leftarrow R(-2,-1)^4 \longleftarrow \begin{array}{c}
R(-3,-1)\\
 \oplus \\
  R(-3,-2)^2\\
 \oplus \\
F_1
\end{array} \longleftarrow F_2 \longleftarrow F_3\leftarrow 0$$
Then 
$$\begin{array}{ccl}
h_{3,2} &= &HF((3,2),R)-4HF((1,1),R)+HF((0,1),R)+2HF((0,0),R) \\
 & = &12-16+2+2=0.
 \end{array}
 $$
 \end{proof}
 
 So far we have determined the following shape of the Hilbert function of $R/I_U$:
 \begin{small}
 $$\begin{array}{c|cccccc}
   & 0 & 1 & 2 & 3 & 4 \\
   \hline
  0 & 1 & 2 & 3 & 4 & 5 \\
1 & 2 & 4 & 6 & 8 & 10 \\
2 & 3 & 2 & h_{2,2}& h_{2,3} & h_{2,4} \\
3 & 4 & h_{3,1} & 0 & 0 &0 \\
4 & 5 & h_{4,1} & 0 & 0 &0 \\
5 & 6 & h_{5,1} & 0 & 0 &0 \\
\end{array}$$
\end{small}
If linear syzygies are present we know from the previous section the exact description of the possible minimal resolutions of $I_U$ and it is an easy check that they agree with the last four Hilbert functions in Proposition \ref{hilb}. Next we focus on the case when no linear syzygies are present. By 
Lemma \ref{ABlemma} this yields $h_{2,2}=1$ and $h_{3,1}=0$, hence $h_{i,1}=0$ for $i\geq 3$. We show that in the absence of linear syzygies only the first two Hilbert functions in Proposition \ref{hilb} may occur:

\subsection{Types 1 and 2}
In the following we assume that the basepoint free ideal $I_U$ has no linear syzygies. We first determine the maximal numerical types which correspond to the  Hilbert functions found in \S 4.1 and  then we show that only the Betti 
numbers corresponding to linear syzygies cancel. 

\begin{prop}\label{HFnoLin}
If $U$ is basepoint free and $I_U$ has no linear syzygies, then
\begin{enumerate}
\item $I_U$ cannot have two or more linearly independent bidegree $(0,2)$ first syzygies
\item $I_U$ cannot have two minimal first syzygies of bidegrees $(0,2)$, $(0,j)$, $j> 2$ 
\item $I_U$ has a single bidegree $(0,2)$ minimal syzygy  iff $h_{2,j}=1$ for $j\geq 3$
\item $I_U$ has no bidegree $(0,2)$ minimal syzygy iff $h_{2,j}=0$ for $j\geq 3$
\end{enumerate}
\end{prop}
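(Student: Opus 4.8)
\emph{Plan.} The first step is to reorganize all information in bidegree $(2,\ast)$ into a single matrix over $k[u,v]$. Expanding each generator $p_i$ in the basis $s^2,st,t^2$ of $R_{2,0}$ produces a $3\times 4$ matrix $\psi$ of linear forms in $u,v$, i.e. a map of free $k[u,v]$-modules $\psi\colon k[u,v]^4(-1)\to k[u,v]^3$. Its cokernel $M$ has $\dim_k M_j=h_{2,j}$, and $(\ker\psi)_j$ is exactly the space of first syzygies of $I_U$ with coefficients of bidegree $(0,j-1)$, i.e. the first syzygies of bidegree $(2,j)$; comparing Hilbert functions gives $h_{2,j}=3-j+\dim_k(\ker\psi)_j$. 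Since the $p_i$ are independent and $I_U$ has no linear syzygy, there is no syzygy of bidegree $(2,1)$ or $(2,2)$, so every syzygy of bidegree $(2,3)$ is a minimal one; these are precisely the bidegree $(0,2)$ first syzygies, and their number equals $\dim_k(\ker\psi)_3=h_{2,3}$. Thus controlling the module $\ker\psi$ controls all four assertions at once.

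The crux is the claim that $\psi$ has rank $3$. If the rank were $\le 1$, then for a general $[u_0:v_0]$ all the binary quadratics $p_i(s,t,u_0,v_0)$ would be proportional to a single quadratic, whose root would be a basepoint of $\phi_U$; so the rank is $2$. By the same basepoint-free reasoning the span $P_{[u:v]}:=\spn\{p_i(s,t,u,v)\}\subseteq R_{2,0}$ is $2$-dimensional for \emph{every} $[u:v]$, so $\gamma\colon[u:v]\mapsto P_{[u:v]}$ is a morphism $\mathbb{P}^1\to\mathrm{Gr}(2,R_{2,0})\cong\mathbb{P}^2$. Choosing two of the $p_i$ spanning $P_{[u:v]}$ generically and writing $p_i=A_iu+B_iv$ with $A_i,B_i\in R_{2,0}$, the line $P_{[u:v]}^{\perp}$ is the exterior product $(A_iu+B_iv)\wedge(A_ju+B_jv)$, which is quadratic in $[u:v]$; hence $\deg\gamma\le 2$. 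Now I rule out each possibility. If $\deg\gamma=0$ then all $A_i,B_i$ lie in a fixed pencil $P$, so $U=P\cdot R_{0,1}$ and $I_U$ has two bidegree $(0,1)$ syzygies. If $\deg\gamma=1$ then the $P_{[u:v]}$ form a pencil of $2$-planes through a fixed line $\langle w_0\rangle$, forcing every $p_i$ into the $3$-dimensional space $\langle w_0u,\ w_0v,\ uw_1+vw_2\rangle$, contradicting $\dim U=4$. If $\deg\gamma=2$, writing the dual conic as $u^2g_0+uvg_1+v^2g_2$ with $g_0,g_2\in R_{2,0}^{\vee}$ nonzero, comparing coefficients of $u^3$ and $v^3$ forces every $A_i\in\ker g_0$ and every $B_i\in\ker g_2$, so $U=Q_0\cdot u\oplus Q_2\cdot v$ for pencils $Q_0,Q_2\subseteq R_{2,0}$; since $Q_0\cap Q_2\ne 0$, picking $0\ne q$ there gives $qu,qv\in I_U$ and hence a bidegree $(0,1)$ syzygy. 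Each case contradicts a hypothesis, proving the claim.

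Granting the claim, $\im\psi$ is a torsion-free module over the $2$-dimensional regular ring $k[u,v]$, so it has projective dimension $\le 1$ and $\ker\psi$ is free of rank $1$: $\ker\psi\cong k[u,v](-e)$. Then $h_{2,j}=3-j+\dim_k k[u,v]_{j-e}$, which equals $3-j$ for $j\le e-1$ and $4-e$ for $j\ge e-1$. The known values $h_{2,1}=2$, $h_{2,2}=1$ force $e\ge 3$, and $h_{2,j}\ge 0$ forces $e\le 4$, so $e\in\{3,4\}$ and $h_{2,j}=4-e$ for all $j\ge 3$. Finally I read off the statements: $\ker\psi$ has a single minimal generator, of bidegree $(2,e)$, so $I_U$ has exactly one minimal first syzygy of bidegree $(2,\ast)$, necessarily with coefficients of bidegree $(0,e-1)$; and the space of bidegree $(0,2)$ syzygies is $(\ker\psi)_3\cong k[u,v]_{3-e}$, of dimension $1$ if $e=3$ and $0$ if $e=4$. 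This gives (1) and (2) at once, and the dichotomy $e=3$ versus $e=4$, together with $h_{2,j}=4-e$ for $j\ge 3$, is precisely (3) versus (4).

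The main obstacle is the rank claim — in particular excluding the rank-$2$ case, where handling $\deg\gamma=2$ and verifying that it still forces a linear syzygy takes the most work. Everything afterward is dimension counting along the resolution $0\to k[u,v](-e)\to k[u,v]^4(-1)\to k[u,v]^3\to M\to 0$.
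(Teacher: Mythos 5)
Your proof is correct, and it takes a genuinely different route from the paper's. The paper handles the four claims separately: parts (1) and (2) by an explicit hands-on manipulation of the coefficient vectors of two candidate syzygies (reducing to a contradiction via the Segre--Veronese geometry and an induction on $j$ for part (2)), and parts (3) and (4) by direct Hilbert function bookkeeping against the shape of the resolution. You instead package all the bidegree $(2,\ast)$ information into a single $k[u,v]$-linear map $\psi\colon k[u,v]^4(-1)\to k[u,v]^3$, prove the crucial structural fact that $\ker\psi$ is free of rank one (so $\ker\psi\cong k[u,v](-e)$ with $e\in\{3,4\}$ once $(0,1)$-syzygies are excluded and $h_{2,j}\ge0$ is imposed), and then read off all four assertions simultaneously from the single invariant $e$. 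This is tidier and more conceptual than the paper's case analysis and makes clear why there is exactly one minimal ``$(0,\ast)$''-type syzygy; the price you pay is the rank-3 claim, which requires a separate argument. Your $\gamma$-argument for that claim is correct (each of $\deg\gamma\in\{0,1,2\}$ forces a $(0,1)$-syzygy or $\dim U\le3$), though it could be written more carefully: the sentence ``so the rank is $2$'' should read ``so the rank is at least $2$,'' after which you should say explicitly ``suppose, for contradiction, that the generic rank is exactly $2$,'' since the subsequent claim that $P_{[u:v]}$ is $2$-dimensional for every $[u:v]$ (and hence that $\gamma$ is a morphism to $\mathrm{Gr}(2,R_{2,0})$) only makes sense under that assumption, using lower semicontinuity of rank together with the basepoint-free lower bound $\dim P_{[u:v]}\ge2$. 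The conclusion ``$\ker\psi$ is free'' is right but deserves a word: $\im\psi\subset k[u,v]^3$ is torsion-free, hence has depth $\ge1$, hence $\mathrm{pd}\le1$ by Auslander--Buchsbaum, so $\ker\psi$ is projective over $k[u,v]$, hence free by Quillen--Suslin. With these presentational points tightened, the argument is a clean alternative to the one in the paper.
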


\begin{proof}

(1) Suppose $I_U$ has two  linearly independent bidegree $(0,2)$ first syzygies which can be written down by a similar procedure to the one used in  Lemma \ref{LS1} as
$$\begin{matrix}
u^2p+uvq+v^2r & =& 0\\
u^2p'+uvq'+v^2r' & =& 0
\end{matrix}$$ 
with $p,q,r,p',q',r' \in U$. Write $p=p_1u+p_2v$ with $p_1,p_2\in R_{2,0}$ and similarly for $p',q,q',r,r'$. Substituting in the equations above one obtains
$$\begin{matrix}
p_1 & =& 0 & & p'_1& = & 0\\
p_2+q_1 & =& 0 & & p'_2 +q_1'& = & 0\\
q_2+r_1 & =& 0 & & q'_2+r'_1 & = & 0\\
r_2 & =& 0 & & r'_2 & = & 0\\
\end{matrix}$$
hence 
\[
\begin{array}{cccccc}
p & =&p_2v,\mbox{ } & p' &= &p'_2v\\
q & =& -(p_2u+r_1v), \mbox{ } & q'&=&-(p'_2u+r'_1v)\\
r &= &r_1v,  \mbox{ } & r'&=&r'_1v
\end{array}
\]
are elements of $I_U$. If both of the pairs $p_2v, p'_2v$ or $r_1u, r'_1u$ consists of linearly independent elements of $R_{2,1}$, then $U\cap \Sigma_{2,1}$ contains a $\P^1$ inside each of the $\P^2$ fibers over the points corresponding to $u,v$ in the $\P^1$ factor of the map $\P^2\times \P^1\mapsto \P^5$. Pulling back the two lines from $\Sigma_{2,1}$ to the domain of its defining map, one obtains two lines in $\P^2$ which must meet (or be identical). Taking the image of the intersection point we get two elements of the form $\alpha u, \alpha v \in I_U$ which yield a $(0,1)$ syzygy, thus contradicting our assumption. Therefore it must be the case that $p_2'=ap_2$ or $r_1'=br_1$ with $a,b\in k$. The reasoning being identical, we shall only analyze the case $p_2'=ap_2$. A linear combination of the elements  $q=-(p_2u+r_1v), q'=-(p'_2u+r'_1v)\in I_U$ produces $(r'_1-ar_1)v\in I_U$ and a linear combination of the elements  $r_1u, r'_1u\in I_U$ produces $(r'_1-ar_1)u\in I_U$, hence again we obtain a $(0,1)$ syzygy unless $r'_1=ar_1$. But then $(p',q',r')=a(p,q,r)$ and these triples yield linearly dependent bidegree $(0,2)$ syzygies.

(2) The assertion that $I_U$ cannot have a bidegree $(0,2)$ and a (distinct) bidegree $(0,j)$, $j\geq 2$ minimal first syzygies is proved by induction on $j$. The base case $j=2$ has already been solved. Assume $I_U$ has a 
degree $(0,2)$ syzygy $u^2p+uvq+v^2r =0$ with $p=p_1u+p_2v,q,r\in I_U$ and a  bidegree $(0,j)$ syzygy $u^jw_1+u^{j-1}vw_2+\ldots +v^jw_{j+1}=0$ with $w_i=y_iu+z_iv\in I_U$. Then as before $p_1=0, z_1=0,r_2=0,z_{j+1}=0$ and the same reasoning shows one must have $z_1=ap_2$ or  $y_{j+1}=br_1$. Again we handle the case $z_1=ap_2$ where a linear combination of the two syzygies produces the new syzygy 
\[
u^{j-1}v(w_2-aq)+u^{j-2}v^2(w_3-ar)+u^{j-3}v^3(w_3)\ldots +v^jw_{j+1}=0.
\]
Dividing by $v$: $u^{j-1}(w_2-aq)+u^{j-2}v(w_3-ar)+u^{j-3}v^2(w_3)\ldots +v^{j-1}w_{j+1}=0$, which is a minimal bidegree $(0,j-1)$ syzygy iff the original $(0,j)$ syzygy was minimal. This contradicts the induction hypothesis.

(3) An argument similar to Lemma \ref{ABlemma} shows that in the absence of $(0,1)$ syzygies $h_{2,3}$ is equal to the number of bidegree $(0,2)$ syzygies on $I_U$. Note that the absence of $(0,1)$ syzygies implies there can be no bidegree $(0,2)$ second syzygies of $I_U$ to cancel the effect of bidegree $(0,2)$ first syzygies on the Hilbert function. This covers the converse implications of both (3) and (4) as well as the case $j=3$ of the direct implications. The computation of $h_{2,j}$, $j\geq 3$ is completed as follows
$$\begin{matrix}
h_{2,j} &= & HF((2,j),R)-HF((2,j),R(-2,-1)^4)+HF((2,j),R(-2,-3))\\
& = &3(j+1)-4j+(j-2)\\
& =1
\end{matrix}
$$

(4) In this case we compute 
$$h_{2,3}=HF((2,3),R)-HF((2,3),R(-2,-1)^4)=12-12=0$$
$$HF((2,4),R)-HF((2,4),R(-2,-1)^4)=15-16=-1$$
The fact that $h_{2,j}=0$ for higher values of $j$ follows from $h_{2,3}=0$. In fact even more is true: $I_U$ is forced to have a single bidegree $(0,3)$ first syzygy to ensure that $h_{2,j}=0$  for $j\geq 4$.
\end{proof}

\begin{cor}
There are only two possible Hilbert functions for basepoint free ideals $I_U$ 
without linear syzygies, depending on whether there is no $(0,2)$ syzygy or exactly one $(0,2)$ syzygy. The  two possible Hilbert functions are 
\begin{small}
$$
\begin{matrix}
\begin{array}{c|ccccc}
   & 0 & 1 & 2 & 3 & 4 \\
   \hline
  0 & 1 & 2 & 3 & 4 & 5 \\
1 & 2 & 4 & 6 & 8 & 10 \\
2 & 3 & 2 & 1 & 0 & 0 \\
3 & 4 & 0 & 0 & 0 &0 \\
4 & 5 & 0 & 0 & 0 &0 \\
5 & 6 & 0 & 0 & 0 &0 \\
\end{array}
& & &
\begin{array}{c|ccccc}
   & 0 & 1 & 2 & 3 & 4 \\
   \hline
  0 & 1 & 2 & 3 & 4 & 5 \\
1 & 2 & 4 & 6 & 8 & 10 \\
2 & 3 & 2 & 1 & 1 & 1 \\
3 & 4 & 0 & 0 & 0 &0 \\
4 & 5 & 0 & 0 & 0 &0 \\
5 & 6 & 0 & 0 & 0 &0 \\
\end{array}
\end{matrix}
$$
\end{small}
\end{cor}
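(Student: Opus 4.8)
The plan is to observe that, by the point at which this corollary is stated, essentially the entire Hilbert function of $R/I_U$ has been determined, and the only entries still in question are $h_{2,j}$ for $j \ge 2$; these are controlled by the number of minimal first syzygies of bidegree $(0,2)$, which Proposition~\ref{HFnoLin} pins down exactly.

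First I would collect the entries already in hand: $h_{0,j} = j+1$, $h_{1,j} = 2j+2$, $h_{i,0} = i+1$, and $h_{2,1} = 2$ by linear independence of the four generators of $I_U$. Since $I_U$ has no linear syzygies, Lemma~\ref{ABlemma} gives $h_{3,1} = 0$ and $h_{2,2} = 1$. I would then note that the vanishing $h_{3,1} = 0$ propagates: from $R_{3,2} = uR_{3,1} + vR_{3,1}$ one gets $h_{3,2} = 0$, from $R_{3,j} = uR_{3,j-1}+vR_{3,j-1}$ one gets $h_{3,j}=0$ for all $j \ge 1$ by induction, and from $R_{i,j} = sR_{i-1,j}+tR_{i-1,j}$ one gets $h_{i,j} = 0$ for all $i \ge 3$, $j \ge 1$. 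Thus the table is completely known except for the entries $h_{2,j}$ with $j \ge 3$.

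Next I would invoke Proposition~\ref{HFnoLin}. Part~(1) says $I_U$ has at most one minimal first syzygy of bidegree $(0,2)$, so there are precisely two cases. If $I_U$ has such a (necessarily unique) syzygy, part~(3) gives $h_{2,j} = 1$ for every $j \ge 3$, while part~(2) guarantees that no minimal first syzygy of bidegree $(0,j)$ with $j > 2$ also occurs. If $I_U$ has no bidegree $(0,2)$ minimal syzygy, part~(4) gives $h_{2,j} = 0$ for every $j \ge 3$. Substituting these two alternatives into the table produces exactly the two Hilbert functions in the statement and no others. To finish I would exhibit the Type~1 and Type~2 examples of Table~\ref{T1} as explicit instances realizing, respectively, the first and second of these Hilbert functions.

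I do not expect a real obstacle here, since the substance has already been carried out in Proposition~\ref{HFnoLin}; the only point to be careful about is that the dichotomy ``no bidegree $(0,2)$ syzygy'' versus ``exactly one'' is genuinely exhaustive, and that a higher minimal first syzygy of bidegree $(0,j)$, $j>2$, cannot sneak in a third Hilbert function. Exhaustiveness of the dichotomy is Proposition~\ref{HFnoLin}(1). For the second concern: in the case with no $(0,2)$ syzygy the propagation $h_{2,3} = 0 \Rightarrow h_{2,j} = 0$ for $j \ge 3$ fixes the second row regardless of which higher syzygies are present, and in the case with a $(0,2)$ syzygy Proposition~\ref{HFnoLin}(2) forbids such higher syzygies outright.
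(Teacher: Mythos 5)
Your proposal is correct and follows essentially the same route as the paper: the corollary is an immediate assembly of the entries already computed ($h_{0,j}$, $h_{1,j}$, $h_{i,0}$, $h_{2,1}=2$), Lemma~\ref{ABlemma} giving $h_{2,2}=1$ and $h_{3,1}=0$ in the absence of linear syzygies, the propagation of these vanishings to $h_{i,j}=0$ for $i\ge 3$, $j\ge 1$, and Proposition~\ref{HFnoLin}(1)--(4) pinning down $h_{2,j}$ for $j\ge 3$ according to whether there is no or exactly one bidegree $(0,2)$ minimal syzygy. Your extra remarks (exhaustiveness of the dichotomy, exclusion of higher $(0,j)$ syzygies, and realizability via the Type 1 and Type 2 examples) match the paper's reasoning and add nothing problematic.
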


 \begin{prop}\label{syznoLin}
If $U$ is basepoint free and $I_U$ has no linear syzygies, then $I_U$ has 
\begin{enumerate}
\item exactly 4 bidegree $(1,1)$ first syzygies 
\item exactly 2 bidegree $(2,0)$ first syzygies
\end{enumerate}
\end{prop}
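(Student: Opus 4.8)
The plan is to read off both counts directly from the Hilbert function of $R/I_U$ determined above, exploiting the fact that in total degree $5$ no second or higher syzygies can interfere. First I would record two vanishings already in hand: the lemma $h_{3,2}=0$ proved above, and $h_{4,1}=0$, which follows because $h_{3,1}=0$ (Lemma~\ref{ABlemma}(1) together with the standing hypothesis that $I_U$ has no linear syzygies) and every monomial of bidegree $(4,1)$ is $s$ or $t$ times a monomial of bidegree $(3,1)$, so $(R/I_U)_{(3,1)}=0$ forces $(R/I_U)_{(4,1)}=0$.

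Next I would locate where the minimal bigraded free resolution
\[
0\leftarrow R/I_U \leftarrow R \leftarrow R(-2,-1)^4 \leftarrow F_2 \leftarrow F_3 \leftarrow \cdots
\]
can carry generators of small total degree. The four generators of $I_U$ lie in bidegree $(2,1)$, total degree $3$; a first syzygy of bidegree $(a,b)$ contributes a generator of $F_2$ in bidegree $(2+a,1+b)$, of total degree $4+a+b$. Total degree $3$ would force $(a,b)=(0,0)$, a linear dependence among the four generators, excluded since they are independent; total degree $4$ forces $(a,b)\in\{(1,0),(0,1)\}$, excluded by hypothesis. Hence every minimal first syzygy has total degree $\ge 5$, and by minimality every generator of $F_i$ with $i\ge 3$ then has total degree $\ge 6$, so such a free module vanishes in all bidegrees of total degree $\le 5$. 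Consequently, for any bidegree $(d_1,d_2)$ with $d_1+d_2=5$, additivity of Hilbert functions along the resolution gives
\[
h_{d_1,d_2}=HF((d_1,d_2),R)-4\,HF((d_1-2,d_2-1),R)+\sigma_{d_1-2,\,d_2-1},
\]
where $\sigma_{a,b}$ denotes the number of minimal first syzygies of bidegree $(a,b)$; indeed a minimal $F_2$-generator of bidegree $(2+a,1+b)$ with $a+b\ge 5$ contributes in bidegree $(d_1,d_2)$ only when $(2+a,1+b)\le (d_1,d_2)$ componentwise, which forces $(2+a,1+b)=(d_1,d_2)$.

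The two assertions now follow by arithmetic. Taking $(d_1,d_2)=(3,2)$,
\[
0=h_{3,2}=HF((3,2),R)-4\,HF((1,1),R)+\sigma_{1,1}=12-16+\sigma_{1,1},
\]
so $\sigma_{1,1}=4$, which is the number of bidegree $(1,1)$ first syzygies. Taking $(d_1,d_2)=(4,1)$,
\[
0=h_{4,1}=HF((4,1),R)-4\,HF((2,0),R)+\sigma_{2,0}=10-12+\sigma_{2,0},
\]
so $\sigma_{2,0}=2$, the number of bidegree $(2,0)$ first syzygies.

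The proof is mostly bookkeeping; the one step meriting care is the total-degree estimate that isolates total degree $5$. In particular one must check that the remaining potential total-degree-$5$ syzygies, those of bidegree $(0,2)$ (with $F_2$-generator in bidegree $(2,3)$), cannot contribute in bidegrees $(3,2)$ or $(4,1)$ — and they cannot, since $(2,3)$ is incomparable to both — and that the two input vanishings $h_{3,2}=h_{4,1}=0$ are genuinely available at this point, which they are from the preceding analysis of the Hilbert function.
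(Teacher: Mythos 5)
Your proof is correct and follows essentially the same route as the paper: both arguments compute the first Betti numbers $\beta^1_{3,2}$ and $\beta^1_{4,1}$ from the Hilbert function values $h_{3,2}=h_{4,1}=0$ by noting that the absence of linear first syzygies rules out any second syzygies contributing in total degree $5$. Your write-up merely makes explicit two steps the paper leaves implicit, namely the deduction $h_{4,1}=0$ from $h_{3,1}=0$ and the total-degree bookkeeping that isolates the $F_2$ contribution.
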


\begin{proof}
Note that there cannot be any second syzygies in bidegrees $(1,1)$ and $(2,0)$ because of the absence of linear first syzygies. Thus the numbers $\beta^1_{3,2}, \beta^1_{4,1}$ of bidegree $(1,1)$ and $(2,0)$ first syzygies are determined by the Hilbert function:
$$\beta^1_{3,2}=h_{3,2}-HF((3,2),R)+HF((3,2),R(-2,-1)^4)=0-12+16=4$$
$$\beta^1_{4,1}=h_{4,1}-HF((4,1),R)+HF((4,1),R(-2,-1)^4)=0-10+12=2$$
\end{proof}

Next we obtain upper bounds on the bigraded Betti numbers of $I_U$ by using bigraded initial ideals. The concept of  initial ideal with respect to any fixed term order is well known and so is the cancellation principle asserting that the resolution of an ideal can be obtained from that of its initial ideal by cancellation of some consecutive syzygies of the same bidegree. In general the problem of determining which cancellations occur is very difficult. In the following we exploit the cancellation principle by using the bigraded setting to our advantage. For the initial ideal computations we use the revlex order induced  by $s>t>u>v$.  

In \cite{ACD}, Aramova, Crona and de Negri introduce bigeneric initial ideals as follows (we adapt the definition to our setting): let $G={\bf{GL}}(2,2) \times {\bf{GL}}(2,2)$ with an element  $g=(d_{ij}, e_{kl}) \in G$
acting on the variables in $R$ by
$$
g: s \mapsto d_{11}s+d_{12}t , \ t \mapsto d_{21}s+d_{22}t ,\ u \mapsto e_{11}u+e_{12}v , \ v \mapsto e_{21}u+e_{22}v\,\,\,
$$
We shall make use of the following results of \cite{ACD}.
\begin{thm}\label{thmacd}$[$\cite{ACD} Theorem 1.4$]$
Let $I \subset R$ be a bigraded ideal. There is a Zariski open set $U$ in  $G$ and an ideal $J$ such that for all $g\in U$ we have $ in(g(I))=J$.
\end {thm}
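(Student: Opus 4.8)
This theorem is quoted from \cite{ACD}; the argument I would give is the bigraded version of Galligo's existence proof for the generic initial ideal, using only the standard machinery of algebraic group actions on Grassmannians. Fix the term order $>$ (revlex with $s>t>u>v$). Every $g=(d_{ij},e_{kl})\in G=\mathbf{GL}(2,2)\times\mathbf{GL}(2,2)$ preserves the bigrading, so for each bidegree $(a,b)$ the map $g\mapsto g(I_{a,b})$ is a morphism $\mu_{a,b}\colon G\to\mathrm{Gr}(d_{a,b},R_{a,b})$, where $d_{a,b}=\dim_k I_{a,b}$; indeed the Pl\"{u}cker coordinates of $g(I_{a,b})$ are polynomial in the entries of $g$ because $g\mapsto g|_{R_{a,b}}$ is a polynomial representation. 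List the monomials of $R_{a,b}$ in decreasing $>$-order; for a subspace $V\subseteq R_{a,b}$ of dimension $d_{a,b}$ the pivot set of the reduced row-echelon form in this ordered basis is exactly $\mathrm{in}(V)$, and for each of the finitely many possible pivot sets $\Lambda$ the locus $\{\,V:\mathrm{in}(V)=\Lambda\,\}$ is a Schubert cell for the standard flag, hence locally closed in $\mathrm{Gr}(d_{a,b},R_{a,b})$. Pulling these cells back along $\mu_{a,b}$ partitions the irreducible variety $G$ into finitely many locally closed pieces, and since a dense locally closed subset of an irreducible variety is open, exactly one of them is a nonempty open set $U_{a,b}\subseteq G$; on $U_{a,b}$ the space $\mathrm{in}(g(I_{a,b}))$ is a fixed monomial space $J_{a,b}$.

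The next step is to glue these together. Because $G$ is irreducible, $U_{a,b}\cap U_{a+1,b}\ne\emptyset$, and since $\mathrm{in}(gI)$ is an ideal for every $g$, evaluating at such a $g$ gives $R_{1,0}\cdot J_{a,b}\subseteq J_{a+1,b}$; the same with $R_{0,1}$, so $J:=\bigoplus_{(a,b)}J_{a,b}$ is a monomial ideal, and $J$ as well as $\mathrm{in}(gI)$ for every $g$ carries the bigraded Hilbert function of $I$. To replace the infinite intersection $\bigcap_{(a,b)}U_{a,b}$ by a finite one I would invoke a uniform bound: there is an $N$, depending only on the bigraded Hilbert function of $I$, so that every bigraded ideal with that Hilbert function --- in particular $J$ and each $\mathrm{in}(gI)$ --- is generated in bidegrees inside the box $B_N=\{(c,d):0\le c,d\le N\}$. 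Granting this, set $U:=\bigcap_{(c,d)\in B_N}U_{c,d}$, a nonempty open subset of $G$. For any monomial ideal $K$ generated in $B_N$ one has $K_{e,f}=\sum_{(c,d)\in B_N}R_{e-c,\,f-d}\cdot K_{c,d}$ for all $(e,f)$ (with $R$ taken to vanish in negative degrees); applying this to $K=J$ and to $K=\mathrm{in}(gI)$ for $g\in U$, and using $J_{c,d}=\mathrm{in}(gI)_{c,d}$ on $B_N$, we get $J_{e,f}=\mathrm{in}(gI)_{e,f}$ for all $(e,f)$, i.e.\ $\mathrm{in}(gI)=J$ for every $g\in U$.

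The crux --- and the step I expect to be the main obstacle --- is the uniform generator-bidegree bound. In the singly-graded setting it is Gotzmann's regularity theorem; in the bigraded setting one can appeal to multigraded Castelnuovo--Mumford regularity bounds in the sense of Maclagan--Smith, which are again controlled by Hilbert-function data. A route tailored to $R=k[s,t]\otimes k[u,v]$ is first to promote $J$ to a bi-Borel ideal: a generic $g$ may be taken in $(B_1\times B_2)g'$ for the Borel subgroups $B_i\subset\mathbf{GL}(2,2)$ and $g'\in U_{a,b}$, whence $J$ is fixed by $B_1\times B_2$, i.e.\ strongly stable separately in $\{s,t\}$ and in $\{u,v\}$; for such ideals the minimal generators are pinned down by the Hilbert function following Eliahou--Kervaire, which yields the bound. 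This bi-Borel refinement is also the feature one usually exploits downstream (it is what makes the initial-ideal computations in the next section tractable), although for the bare existence statement above it enters only as a means to the degree bound.
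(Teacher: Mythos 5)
The paper does not prove this statement; it is quoted directly from Aramova--Crona--De~Negri (\cite{ACD}, Theorem~1.4) and used as a black box, so there is no in-paper argument to compare against. Your sketch is therefore judged on its own, and it is a correct rendering of the standard Galligo-style argument carried over to the bigraded setting: the morphism $G\to\mathrm{Gr}(d_{a,b},R_{a,b})$, the Schubert-cell stratification by pivot set (equivalently by initial monomial space), irreducibility of $G$ forcing a single dense open stratum $U_{a,b}$, and the gluing $R_{1,0}\cdot J_{a,b}\subseteq J_{a+1,b}$, $R_{0,1}\cdot J_{a,b}\subseteq J_{a,b+1}$ showing that $J=\bigoplus J_{a,b}$ is an ideal, are all exactly right.

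Where you make the argument harder than it needs to be is the last step. You ask for a uniform bound $N$, depending only on the Hilbert function, such that \emph{every} bigraded ideal with that Hilbert function (in particular every $\mathrm{in}(gI)$) is generated inside the box $B_N$, and you are prepared to invoke Maclagan--Smith regularity or a bi-Borel/Eliahou--Kervaire analysis to get it. That bound does exist --- for instance by passing to the coarse $\mathbb{Z}$-grading by total degree and applying Gotzmann --- so there is no gap, but the machinery is unnecessary. Once $J$ is known to be an ideal, it is a monomial ideal in a Noetherian ring, hence finitely generated, hence generated in some finite box $B_N$; no uniformity over the whole Hilbert function class is required. Setting $U=\bigcap_{(c,d)\in B_N}U_{c,d}$, for $g\in U$ you get $J_{c,d}=\mathrm{in}(gI)_{c,d}$ throughout $B_N$; since $J$ is generated there this gives the one-sided containment $J\subseteq\mathrm{in}(gI)$, and since $J$ and $\mathrm{in}(gI)$ both have the bigraded Hilbert function of $I$, the containment is an equality. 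This closes the argument without any statement about where $\mathrm{in}(gI)$ is generated and without regularity bounds; the bi-Borel structure of $J$ is then a separate (and easy) corollary, which is indeed what the paper uses later via \cite{ACD} Propositions~1.5--1.6, but it is not needed for the existence statement itself.
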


\begin{defn}The ideal $J$ in Theorem~\ref{thmacd} is defined to be the bigeneric initial ideal of $I$, denoted by  $bigin(I)$.
\end{defn}

\begin{defn}
A monomial ideal $I\subset R$ is  bi-Borel fixed if $g(I)=I$ for any upper triangular matrix $g\in G$.
\end{defn}

\begin{defn}
\label{stonglybistabil}
A monomial ideal $I\subset R=k[s,t,u,v]$ is strongly bistable if for
every monomial $m\in I$ the following conditions are satisfied:
\begin{enumerate}
\item if $m$ is divisible by $t$, then $sm/t\in I$.
\item if $m$ is divisible by $v$, then $um/v\in I$ .
\end{enumerate}
\end{defn}

As in the $\mathbb{Z}$-graded case, the ideal $bigin(I)$ has the same 
bigraded Hilbert function as $I$. Propositions 1.5 and 1.6 
of \cite{ACD} show that $bigin(I)$ is bi-Borel fixed, and in 
characteristic zero, $bigin(I)$ is strongly bistable.

\begin{prop}\label{bigin}
For each of the Hilbert functions in Proposition \ref{HFnoLin} there are exactly two strongly bistable monomial ideals realizing it. These ideals and their respective bigraded resolutions are:
\begin{enumerate}
\item  $G_1=\langle s^2u,s^2v,stu,stv,t^2u^2,t^2uv,t^3u,t^3v,t^2v^3\rangle $ with minimal resolution
\begin{small}
 \begin{equation}\label{bigin1}
 0 \leftarrow G_1 \leftarrow  \begin{array}{c}
(-2,-1)^4 \\
 \oplus \\  
 (-2,-2)^2\\
 \oplus \\  
 (-2,-3)\\
 \oplus \\  
 (-3,-1)^2\\
 \end{array} \longleftarrow 
  \begin{array}{c}
 (-2,-2)^2\\
 \oplus \\  
 (-2,-3)\\
  \oplus \\ 
 (-2,-4)\\
 \oplus \\
 (-3,-1)^2\\
 \oplus \\
(-3,-2)^5\\
 \oplus \\
 (-3,-3)^2\\
 \oplus \\
(-4,-1)^2\\
\end{array} \longleftarrow 
\begin{array}{c}
 (-3,-2)\\
 \oplus \\
 (-3,-3)^2\\
  \oplus \\
(-3,-4)^2\\
 \oplus \\
(-4,-2)^3\\
 \oplus \\
(-4,-3)\\
 \end{array} \longleftarrow 
 \begin{array}{c}
 (-4,-3)\\
  \oplus \\
(-4,-4) \end{array}\leftarrow 0
\end{equation}
\end{small}
 $G'_1=\langle s^2u,s^2v,stu,t^2u,stv^2,st^2v,t^3v,t^2v^3\rangle $ with minimal resolution
\begin{small}
 \begin{equation}\label{bigin1'}
 0 \leftarrow G'_1 \leftarrow  \begin{array}{c}
(-2,-1)^4 \\
 \oplus \\  
 (-2,-2)\\
 \oplus \\  
 (-2,-3)\\
 \oplus \\  
 (-3,-1)^2\\
 \end{array} \longleftarrow 
  \begin{array}{c}
 (-2,-2)\\
 \oplus \\  
 (-2,-3)\\
  \oplus \\ 
 (-2,-4)\\
 \oplus \\
 (-3,-1)^2\\
 \oplus \\
(-3,-2)^4\\
 \oplus \\
 (-3,-3)^2\\
 \oplus \\
(-4,-1)^2\\
\end{array} \longleftarrow 
\begin{array}{c}
 (-3,-3)^2\\
  \oplus \\
(-3,-4)^2\\
 \oplus \\
(-4,-2)^3\\
 \oplus \\
(-4,-3)\\
 \end{array} \longleftarrow 
 \begin{array}{c}
 (-4,-3)\\
  \oplus \\
(-4,-4) \end{array}\leftarrow 0
\end{equation}
\end{small}
 
\item  $G_2=\langle s^2u,s^2v,stu,stv,t^2u^2,t^2uv,t^3u,t^3v\rangle $ with minimal resolution
\begin{small}
 \begin{equation}\label{bigin2}
 0 \leftarrow G_2 \leftarrow  \begin{array}{c}
(-2,-1)^4 \\
 \oplus \\  
 (-2,-2)^2\\
 \oplus \\    
 (-3,-1)^2\\
 \end{array} \longleftarrow 
  \begin{array}{c}
 (-2,-2)^2\\
 \oplus \\  
 (-2,-3)\\
  \oplus \\ 
 (-3,-1)^2\\
 \oplus \\
(-3,-2)^5\\
 \oplus \\
(-4,-1)^2\\
\end{array} \longleftarrow 
\begin{array}{c}
 (-3,-2)\\
 \oplus \\
 (-3,-3)^2\\
  \oplus \\
(-4,-2)^3\\
 \end{array} \longleftarrow 
 \begin{array}{c}
 (-4,-3)\\
 \end{array}\leftarrow 0
\end{equation}
\end{small}
$G'_2=\langle s^2u,s^2v,stu,t^2u,stv^2,st^2v,t^3v \rangle$ with minimal resolution
\begin{small}
\begin{equation}\label{bigin2'}
0 \leftarrow G'_2 \leftarrow  \begin{array}{c}
(-2,-1)^4 \\
 \oplus \\  
 (-2,-2)\\
 \oplus \\  
 (-3,-1)^2\\
 \end{array} \longleftarrow 
  \begin{array}{c}
 (-2,-2)\\
 \oplus \\
 (-2,-3)\\
 \oplus \\
 (-3,-1)^2\\
 \oplus \\
(-3,-2)^4\\
 \oplus \\
(-4,-1)^2\\
\end{array} \longleftarrow 
\begin{array}{c}
 (-3,-3)^2\\
 \oplus \\
(-4,-2)^3\\
 \end{array} \longleftarrow 
(-4,-3) \leftarrow 0 
\end{equation}
\end{small}
\end{enumerate}
\end{prop}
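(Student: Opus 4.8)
The plan is to classify the strongly bistable monomial ideals realizing each of the two bigraded Hilbert functions appearing in Proposition~\ref{HFnoLin}, and then to resolve each ideal that occurs. Since in characteristic zero $bigin(I)$ is strongly bistable and has the same bigraded Hilbert function as $I$, this classification is exactly what is needed, and we may work entirely with monomial ideals. Throughout, "strongly bistable" means the monomials of the ideal in each bidegree form an order filter in the poset of monomials of that bidegree, whose covering relations are the moves $t\mapsto s$ and $v\mapsto u$ of Definition~\ref{stonglybistabil}.

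First I would pin down the bidegree $(2,1)$ piece. The Hilbert function forces $\dim J_{2,1}=6-h_{2,1}=4$, and the six monomials of $R_{2,1}$ form the product poset of the chain $s^2>st>t^2$ (covering relation $t\mapsto s$) with the chain $u>v$ (covering relation $v\mapsto u$). A size-$4$ order filter in this $3\times 2$ grid is described by the pair $(a_0,a_1)$ recording how many of its monomials carry $u$, resp.\ $v$; these satisfy $3\ge a_0\ge a_1\ge 0$ and $a_0+a_1=4$, forcing $(a_0,a_1)\in\{(3,1),(2,2)\}$. Hence $J_{2,1}$ is one of
\[
V_1=\spn\{s^2u,s^2v,stu,stv\}\quad\text{or}\quad V_2=\spn\{s^2u,s^2v,stu,t^2u\}.
\]

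Next I would propagate upward. Fixing one of $V_1,V_2$ and one of the two target Hilbert functions, I would determine $J$ bidegree by bidegree: in bidegree $(a,b)$ one already has the shadow $\langle u,v\rangle J_{a,b-1}+\langle s,t\rangle J_{a-1,b}\subseteq J_{a,b}$, one knows $\dim J_{a,b}$ from the Hilbert function, and one knows $J_{a,b}$ is an order filter; since filters in a product of two chains are staircases, the new minimal generators in that bidegree are uniquely forced, or one reaches a contradiction. The only bidegrees requiring attention are $(2,2),(2,3),(2,4),(3,1)$ and $(3,2)$, and running the count there shows that each of $V_1,V_2$ extends to a \emph{unique} strongly bistable ideal for each of the two Hilbert functions; this yields precisely $G_1,G_1'$ for the first Hilbert function and $G_2,G_2'$ for the second, so there are exactly two in each case. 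Finally, the minimal free resolutions of $G_1,G_1',G_2,G_2'$ are obtained by a direct computation --- for instance by iterated mapping cones built along the order in which generators are introduced in the propagation step, after checking there are no overlaps in the shifts --- identifying them with the complexes \eqref{bigin1}, \eqref{bigin1'}, \eqref{bigin2}, \eqref{bigin2'}.

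The main obstacle is the bookkeeping in the propagation step: for each relevant bidegree one must verify both that the listed new generators are \emph{forced} by the filter condition together with the Hilbert-function dimension count, and that no later bidegree secretly introduces a genuine choice (which would produce a third strongly bistable ideal with the same Hilbert function). Care is needed in bidegrees $(2,3)$ and $(2,4)$, where the two Hilbert functions first differ, and in bidegree $(3,1)$, where the shape of the filter depends on whether one started from $V_1$ or $V_2$.
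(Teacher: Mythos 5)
Your proposal is correct and follows essentially the same route as the paper: identify the two possible strongly bistable sets of four monomials in bidegree $(2,1)$, then show that the Hilbert-function constraints together with the order-filter condition force the remaining generators uniquely in each relevant bidegree, and finally record the resolutions by direct computation. The paper states the propagation step more tersely (simply naming the forced monomials in bidegrees $(2,2)$, $(3,1)$, $(2,3)$), whereas you organize it via the staircase/order-filter language, but the underlying argument is identical.
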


 \begin{proof}
There are only two strongly bistable sets of four monomials in $R_{2,1}$: $\{s^2u, s^2v,$ $stu, stv\}$ and $\{s^2, s^2v, stu, t^2u\}$. To complete $\{s^2u, s^2v,$ $stu, stv\}$ to an ideal realizing one of the Hilbert functions in Proposition \ref{HFnoLin} we need two additional monomials in $R_{2,2}$, which must be $t^2u^2, t^2uv$ in order to preserve bistability. 
Then we must add the two remaining monomials $t^3u,t^3v$ in $R_{3,1}$,
which yields the second Hilbert function. To realize the first 
Hilbert function we must also include the 
remaining monomial $t^2v^3 \in R_{2,3}$. 
To complete $\{s^2, s^2v, stu, t^2u\}$ to an ideal realizing one of the Hilbert functions in Proposition \ref{HFnoLin}, we need one additional monomial in $R_{2,2}$ which must be $stv^2$ in order to preserve bistability. Then 
we must add the two remaining monomials $st^2v,t^3v \in R_{3,1}$. Then to 
realize the first Hilbert function, we must add the remaining monomial 
$t^2v^3 \in R_{2,3}$.
\end{proof}

\begin{thm}\label{T1T2res}
There are two numerical types for the minimal Betti numbers of basepoint free ideals $I_U$ without linear syzygies.
\begin{enumerate}
\item If there is a bidegree $(0,2)$  first syzygy  then $I_U$ has numerical Type 2.
\item If there is no bidegree $(0,2)$ first syzygy then $I_U$ has numerical Type 1.
\end{enumerate}
\end{thm}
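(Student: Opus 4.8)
The plan is to run the entire computation through bigeneric initial ideals and the consecutive cancellation principle; the only substantive step will be to check that no cancellation occurs beyond the ones forced by the numerical data already in hand.

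First I would invoke the Corollary to Proposition~\ref{HFnoLin}: a basepoint free $I_U$ with no linear syzygies has one of exactly two bigraded Hilbert functions, according to whether or not it carries a bidegree $(0,2)$ first syzygy. Since $bigin(I_U)$ has the same bigraded Hilbert function as $I_U$ and, in characteristic zero, is strongly bistable, Proposition~\ref{bigin} forces $bigin(I_U)\in\{G_1,G_1'\}$ in the ``no $(0,2)$ syzygy'' case and $bigin(I_U)\in\{G_2,G_2'\}$ in the ``one $(0,2)$ syzygy'' case. I will show the first case yields numerical Type~1 and the second Type~2, which together with Proposition~\ref{HFnoLin} is the assertion of the theorem.

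Next I would apply the cancellation principle: the bigraded Betti numbers of $I_U$ are obtained from those of $bigin(I_U)$, whose minimal resolutions are displayed in Proposition~\ref{bigin}, by cancelling pairs of summands $R(-a,-b)$ in consecutive free modules. Two batches of cancellations are forced. Because $I_U$ is minimally generated by $p_0,\dots,p_3$, all of bidegree $(2,1)$, every summand of the first free module other than $R(-2,-1)^4$ -- the copies of $R(-2,-2)$, $R(-2,-3)$ and $R(-3,-1)$ -- must cancel against an equal summand of the second, which is also consistent with (and re-proves) the absence of first syzygies of bidegree $(0,1)$ or $(1,0)$. And Proposition~\ref{syznoLin} gives exactly four bidegree $(1,1)$ and two bidegree $(2,0)$ first syzygies, which in the $G_1$ and $G_2$ cases forces one more cancellation, in bidegree $(3,2)$. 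Carrying these out and comparing with Table~\ref{T2}: for $bigin(I_U)=G_2$ or $G_2'$ one lands exactly on the Type~2 Betti table with nothing left over, while for $bigin(I_U)=G_1$ or $G_1'$ one lands on the Type~1 table except that two copies of $R(-3,-3)$ remain in consecutive modules $F_2,F_3$ and one copy of $R(-4,-3)$ in $F_3,F_4$.

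The remaining task -- and this is the crux -- is to show that these last cancellations also occur in the $G_1/G_1'$ case. For the pair $R(-3,-3)$ this means showing $I_U$ has no minimal first syzygy of bidegree $(1,2)$, equivalently that the four bidegree $(1,1)$ first syzygies satisfy no bidegree $(0,1)$ relation. I would deduce this either from the fact that in the ``no $(0,2)$ syzygy'' case $I_U$ has no associated prime of the form $\langle s,t,l(u,v)\rangle$ -- a consequence of basepoint freeness and the geometry of $\Sigma_{2,1}$, parallel to the role of Corollary~\ref{T5PD} in the $(0,1)$ syzygy case -- so that by \cite{ehv} the module $Ext^3_R(R/I_U,R)$ has finite length and the cancellation is forced; or, more concretely, by reducing $p_0,\dots,p_3$ to a convenient normal form (exploiting the bigrading as in the proofs of Theorems~\ref{Type4MC} and \ref{T3res}) and checking directly. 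With the $R(-3,-3)$ summands gone, $F_4$ is contained in $R(-4,-3)\oplus R(-4,-4)$; since $\mm$ is the only prime of codimension $4$, $Ext^4_R(R/I_U,R)$ has finite length, yet a summand $R(-4,-3)$ of $F_4$ would contribute to it the quotient $R(4,3)/\langle \ell_1,\ell_2,\ell_3\rangle$ with $\ell_i\in R_{(0,1)}$ -- the only entries the last differential can have into that summand, for bidegree reasons -- which surjects onto $R/\langle u,v\rangle$ and so is not of finite length, a contradiction. Hence $F_4=R(-4,-4)$ and the Betti table is exactly Type~1. I expect the elimination of the spurious bidegree $(1,2)$ first syzygy to be the only genuinely delicate point; everything else is bookkeeping with results already established.
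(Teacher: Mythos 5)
Your reduction---two Hilbert functions via Proposition~\ref{HFnoLin}, then the bigeneric initial ideals $G_1, G_1', G_2, G_2'$ of Proposition~\ref{bigin}, then consecutive cancellations---is the same as the paper's, and your bookkeeping of which cancellations are forced by the generator count, the absence of linear syzygies, and Proposition~\ref{syznoLin} is correct. The $G_2, G_2'$ cases indeed close with no surplus, and in the $G_1, G_1'$ cases you correctly identify what remains: an $R(-3,-3)^2$ pair in $F_2, F_3$ and an $R(-4,-3)$ pair in $F_3, F_4$.

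Your $R(-4,-3)$ argument is a genuine and clean alternative to the paper's: once the $(-3,-3)$ summands are gone, any nonzero entry of $F_3^* \to F_4^*$ landing in a putative $R(4,3)$ must have bidegree $(0,1)$, so $Ext^4_R(R/I_U,R)$ would surject onto $R/\langle u,v\rangle$, contradicting finite length (which follows from Lemma~\ref{PD1} and \cite{ehv} since $\mm$ is the only codimension-four prime). But, as you yourself flag, this is conditional on the $R(-3,-3)$ step, and that is where the gap lies. Your first suggested route is not justified: it is not clear that finite length of $Ext^3_R(R/I_U,R)$ forces a cancellation---$Ext^3$ is a subquotient of $F_3^*$, and a retained $R(3,3)$ summand does not by itself create an associated prime of positive dimension there---and moreover the input you want (no embedded prime $\langle s,t,\ell(u,v)\rangle$, and you would also need to exclude $\langle u,v,\ell(s,t)\rangle$) is asserted, not proved; in the paper this is a \emph{consequence} of the resolution type (Proposition~\ref{PD2}), so one must guard against circularity. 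The paper instead inspects the explicit differentials of the $G_1, G_1'$ resolutions and observes that the bidegree $(3,3)$ second syzygies depend only on the already-cancelled first syzygies.

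Your second suggested route (a direct check) is in fact short and does close the gap, so let me spell it out. A minimal first syzygy of bidegree $(1,2)$ exists iff the four bidegree $(1,1)$ first syzygies $\sigma_1,\dots,\sigma_4$ satisfy a nontrivial relation $u\tau_1 + v\tau_2 = 0$ with $\tau_1,\tau_2\in\spn_k\{\sigma_1,\dots,\sigma_4\}$. If such a relation held, then since $(u,v)$ is a regular sequence and the syzygy module lies in the free module $R(-2,-1)^4$, the Koszul complex forces $\tau_1 = v\rho$, $\tau_2 = -u\rho$ with $\rho$ of bidegree $(1,0)$; dividing by the nonzerodivisor $v$ shows $\rho$ is itself a syzygy on the $p_i$, hence a bidegree $(1,0)$ first syzygy, which the hypothesis excludes. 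So $\tau_1=\tau_2=0$, the relation is trivial, and the $(-3,-3)$ pair cancels. With that step filled in, your proof is complete, and the $Ext^4$ argument gives a tidier finish than the paper's for the final cancellation.
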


\begin{proof}
Proposition \ref{HFnoLin} establishes that the two situations above are the only possibilities in the absence of linear syzygies and gives the Hilbert function corresponding to each of the two cases. Proposition \ref{bigin}  identifies the possible bigeneric initial ideals for each case. Since these bigeneric initial ideals are initial ideals obtained following a change of coordinates, the cancellation principle applies. We now show the resolutions  (\ref{bigin1}), (\ref{bigin1'}) must cancel to the Type 1 resolution and  the resolutions  (\ref{bigin2}), (\ref{bigin2'}) must cancel to the Type 2 resolution. 

Since $I_U$ is assumed to have no linear syzygies, all linear syzygies appearing in the resolution of its bigeneric initial ideal must cancel. 
Combined with Proposition \ref{syznoLin}, this establishes that in (\ref{bigin2}) or (\ref{bigin2'}) the linear cancellations are the only ones that occur.  In (\ref{bigin1}), the cancellations of generators and first syzygies in bidegrees  $(2,2), (2,3), (3,1)$ are obvious. The  second syzygy in bidegree $(3,2)$ 
depends on the cancelled first syzygies, therefore it must also be cancelled. 
This is natural, since by Proposition \ref{syznoLin}, there are exactly 
four bidegree $(3,2)$ first syzygies. An examination of the maps in the 
resolution (\ref{bigin1}) shows that the bidegree $(3,3)$ second syzygies 
depend on the cancelled first syzygies, so they too must cancel. Finally the bidegree $(4,3)$ last syzygy depends on the previous cancelled second syzygies and so must also cancel.

In (\ref{bigin1'}), the cancellations of generators and first syzygies in bidegrees  $(2,2)$, $(2,3)$, $(3,1)$ are obvious. The second syzygies of bidegree $(3,3)$ depend only on the 
cancelled first syzygies, so they too cancel. Finally the bidegree $(4,3)$ last syzygy depends on the previous cancelled second syzygies and so it must also cancel.
\end{proof}

\section{Primary decomposition}

\begin{lem}\label{PD1}
If $U$ is basepoint free, all embedded primes of $I_U$ are of the
form 
\begin{enumerate}
\item $\langle s,t,l(u,v) \rangle$
\item $\langle u,v,l(s,t) \rangle$
\item $\mathfrak{m} = \langle s,t,u,v \rangle$
\end{enumerate}
\end{lem}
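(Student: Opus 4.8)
The plan is to use the basepoint-free hypothesis to pin down the minimal primes of $I_U$ exactly, and then to squeeze every embedded prime between one of those minimal primes and the maximal ideal $\mathfrak{m}$.

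First I would record that, since $I_U$ is a bihomogeneous ideal, all of its associated primes are bihomogeneous, and each of them contains $\mathrm{Ann}(R/I_U) = I_U$, hence contains $\sqrt{I_U}$. Basepoint freeness gives $\sqrt{I_U} = \langle s,t \rangle \cap \langle u,v \rangle$, so every associated prime $P$ contains the product $\langle s,t\rangle\langle u,v\rangle = \langle su, sv, tu, tv\rangle$; as $P$ is prime this forces $\langle s,t \rangle \subseteq P$ or $\langle u,v \rangle \subseteq P$. The two minimal primes $\langle s,t\rangle$ and $\langle u,v\rangle$ (each of height $2$) are exactly the non-embedded associated primes, so it remains to treat a prime $P$ properly containing one of them, say $\langle s,t\rangle \subseteq P$ with $P \ne \langle s,t\rangle$.

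Such a $P$ has height at least $3$. If $\mathrm{ht}(P) = 4$, then $P = \mathfrak{m}$. If $\mathrm{ht}(P) = 3$, then $P/\langle s,t\rangle$ is a height-one bihomogeneous prime of $R/\langle s,t\rangle \cong k[u,v]$; since $k$ is algebraically closed, every irreducible homogeneous polynomial in two variables is linear, so $P/\langle s,t\rangle = \langle l(u,v)\rangle$ with $l$ of bidegree $(0,1)$, giving $P = \langle s,t,l(u,v)\rangle$. The case $\langle u,v\rangle \subseteq P$ is entirely symmetric and yields $P = \langle u,v,l(s,t)\rangle$ or $P = \mathfrak{m}$, which exhausts the three listed forms.

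The only delicate points are bookkeeping: verifying that the associated primes of a bihomogeneous ideal are themselves bihomogeneous (so that the linear form produced genuinely has bidegree $(0,1)$ or $(1,0)$), and the observation that a prime containing a product of ideals contains one of the factors. There is no substantive computational obstacle at this stage; the genuine refinement, namely that only primes of the shape $\langle s,t,l(u,v)\rangle$ with $l$ linear (together with $\mathfrak{m}$) actually occur, is deferred to Proposition~\ref{PD2}.
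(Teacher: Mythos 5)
Your proof is correct and follows essentially the same route as the paper's: both deduce $\langle s,t\rangle \subseteq P$ or $\langle u,v\rangle \subseteq P$ from $\sqrt{I_U}=\langle s,t\rangle\cap\langle u,v\rangle$, then pass to the quotient $k[u,v]$ (or $k[s,t]$) and observe that over an algebraically closed field the only bihomogeneous primes there are generated by a single linear form or the irrelevant pair. You simply make explicit the bookkeeping that the paper leaves implicit (bihomogeneity of associated primes, the product-in-a-prime argument, and the height dichotomy), which is fine but not a different method.
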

\begin{proof}
Since $\sqrt{I_U} = \langle s,t\rangle \cap \langle u,v \rangle$,
an embedded prime must contain either $\langle s,t\rangle$
or $\langle u,v\rangle$ and  modulo these ideals any remaining minimal generators can be considered as irreducible polynomials in
 $k[u,v]$ or $k[s,t]$ (respectively). But  the only prime ideals here
are $\langle l_i \rangle$ with $l_i$ a linear form, or the irrelevant
ideal. 
\end{proof}

\begin{lem}\label{PDD1}
If $U$ is basepoint free, then the primary components corresponding to 
minimal associated primes of $I_U$ are 
\begin{enumerate}
\item $Q_1=\langle u,v \rangle$
\item $Q_2=\langle s,t \rangle^2$ or $Q_2=\langle p,q \rangle$, with $p,q \in R_{2,0}$ \mbox{ and }$\sqrt{p,q} = \langle s,t \rangle$.
\end{enumerate}
\end{lem}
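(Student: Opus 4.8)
The plan is to identify the minimal primes of $I_U$ and then compute the associated primary components by localizing at them, using that such components are uniquely determined. By the basepoint free hypothesis $\sqrt{I_U}=\langle s,t\rangle\cap\langle u,v\rangle$, an intersection of two incomparable primes, so the minimal primes of $I_U$ are exactly $\langle u,v\rangle$ and $\langle s,t\rangle$; hence $Q_1=(I_U)_{\langle u,v\rangle}\cap R$ and $Q_2=(I_U)_{\langle s,t\rangle}\cap R$, and it suffices to compute these two localized ideals.

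For $Q_1$ I would write each generator as $p_i=A_iu+B_iv$ with $A_i,B_i\in R_{2,0}$ and form the $4\times 2$ matrix $M$ with rows $(A_i,B_i)$, so $(p_0,\dots,p_3)^t=M\,(u,v)^t$. The first step is to observe that some $2\times 2$ minor $d=A_iB_j-A_jB_i\in R_{4,0}$ is nonzero: otherwise the rows of $M$ are proportional over $k(s,t)$, which forces the $p_i$ to be multiples of one bidegree $(2,1)$ form by elements of a space of dimension at most three, contradicting linear independence of the four generators. Since $d\notin\langle u,v\rangle$, Cramer's rule ($du=B_jp_i-B_ip_j$, $dv=A_ip_j-A_jp_i$) places $u$ and $v$ in $(I_U)_{\langle u,v\rangle}$, so this ideal equals $\langle u,v\rangle R_{\langle u,v\rangle}$, which is prime; hence $Q_1=\langle u,v\rangle$.

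For $Q_2$ I would localize at $P=\langle s,t\rangle$, so that $R_P$ is a two-dimensional regular local ring with maximal ideal $\mathfrak n=\langle s,t\rangle R_P$ and $u$ a unit. Writing $\omega=v/u\in R_P$ and $\tilde p_i=u^{-1}p_i=A_i+\omega B_i$, the ideal $(I_U)_P$ is generated by the binary quadratic forms $\tilde p_i\in k(\omega)[s,t]_2$, and since $k(\omega)^{\times}\subseteq R_P^{\times}$ it equals $\tilde W\cdot R_P$ where $\tilde W$ is the $k(\omega)$-span of the $\tilde p_i$, of dimension $r\le 3$. The heart of the argument is the claim $r=\dim_k V_0$, where $V_0=\spn_k\{A_0,\dots,A_3,B_0,\dots,B_3\}\subseteq R_{2,0}$: one inclusion is clear since $\tilde p_i\in V_0\otimes_kk(\omega)$, and for the other I would pass to the orthogonal complement of $\tilde W$, spanned by a primitive vector $\phi(\omega)\in k[\omega]^3$ (when $r=2$), use that every pair of consecutive coefficient-triples $(\phi_m,\phi_{m-1})\in k^6$ of $\phi$ is orthogonal to the four independent vectors $(A_i,B_i)$, and conclude that $(\phi_0,0)$ and $(0,\phi_{\deg\phi})$ span that (two-dimensional) complement, forcing every coefficient of $\phi$ to be proportional to $\phi_0$ and hence contradicting primitivity unless $\phi$ is constant. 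Given this, $r=1$ is impossible (then $(I_U)_P$ would be a nonzero proper principal ideal, so $\sqrt{(I_U)_P}$ would have height one, whereas basepoint freeness gives $\sqrt{(I_U)_P}=\mathfrak n$). If $r=3$ then $\tilde W=k(\omega)[s,t]_2$, so $(I_U)_P=\mathfrak n^2$, and since $\langle s,t\rangle^2$ is $\langle s,t\rangle$-primary we get $Q_2=\langle s,t\rangle^2$. If $r=2$ then $\tilde W=V_0\otimes_kk(\omega)$ with $\dim_kV_0=2$, so each $p_i$ lies in $V_0\otimes_kR_{0,1}$; comparing dimensions ($4=2\cdot 2$) gives $U=V_0\otimes_kR_{0,1}$, whence for a $k$-basis $q_1,q_2$ of $V_0$ we have $I_U=\langle q_1,q_2\rangle\cap\langle u,v\rangle$ (one could instead invoke Proposition~\ref{LS2}), so $Q_2=\langle q_1,q_2\rangle$, and basepoint freeness forces $q_1,q_2$ to have no common factor, i.e.\ $\sqrt{\langle q_1,q_2\rangle}=\langle s,t\rangle$.

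The step I expect to be the main obstacle is the equality $r=\dim_kV_0$ — equivalently, that a two-dimensional $\tilde W$ of this shape is automatically defined over $k$. The localization framework, the Cramer computation for $Q_1$, and the two concluding dimension counts are routine; it is the bookkeeping with the coefficient-triples of the primitive syzygy $\phi(\omega)$, carried out uniformly in its degree, that will require the most care.
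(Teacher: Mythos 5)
Your proposal is correct, but it follows a genuinely different route from the paper. The paper works with the primary components directly: since $Q_1$ is $\langle u,v\rangle$-primary and contains $I_U$, it must contain bidegree $(0,1)$ forms, and if it contained only one then $U$ would lie in a fiber of $\Sigma_{2,1}$, contradicting basepoint freeness, so $Q_1=\langle u,v\rangle$; similarly it analyzes which bidegree $(2,0)$ forms $Q_2$ must contain (at least two, with no common linear factor), concluding the two stated possibilities. You instead use that the primary component at a minimal prime is the contraction of the localization and simply compute both localizations: the Cramer-type identity $du=B_jp_i-B_ip_j$ with a nonvanishing minor $d\in k[s,t]_4$ handles $\langle u,v\rangle$, and at $\langle s,t\rangle$ you reduce to the rank $r$ of the $k(\omega)$-span of the forms $A_i+\omega B_i$ in the two-dimensional regular local ring $R_{\langle s,t\rangle}$, with $r=1$ excluded by Krull's principal ideal theorem, $r=3$ giving $\langle s,t\rangle^2$, and $r=2$ handled by your descent claim $r=\dim_k V_0$. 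That claim — the step you flag as the main obstacle — does go through: the coefficient vectors $(\phi_{\cdot,j},\phi_{\cdot,j-1})$ all lie in the two-dimensional space $U^\perp$, which is spanned by $(\phi_{\cdot,0},0)$ and $(0,\phi_{\cdot,\deg\phi})$ (both nonzero by primitivity and by the definition of the degree), so every coefficient vector of $\phi$ is proportional to $\phi_{\cdot,0}$ and primitivity forces $\phi$ constant, whence $\phi_{\cdot,0}$ annihilates all $A_i,B_i$ and $\dim_k V_0\le 2$. What each approach buys: yours is more self-contained commutative algebra, pins down the structure in the $r=2$ case ($U=V_0\otimes_k R_{0,1}$, consistent with Proposition~\ref{LS2}, which you may cite without circularity since its proof does not use this lemma), and arguably fills in details the paper passes over quickly (why the listed possibilities for $Q_2$ are exhaustive); the paper's argument is shorter and stays close to the Segre--Veronese geometry used throughout. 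One small repair to your $Q_1$ step: if all $2\times2$ minors vanish, the rows $(A_i,B_i)$ are proportional to a single coprime pair $(A,B)$ of forms of some degree $e\le 2$, so each $p_i=h_i\cdot(Au+Bv)$ with $h_i\in k[s,t]_{2-e}$, a space of dimension $3-e\le 3$ — the common form has bidegree $(e,1)$, not necessarily $(2,1)$ — but the contradiction with four independent generators is exactly as you intend.
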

\begin{proof}

Let $Q_1, Q_2$ be the primary components associated to $\langle u,v \rangle$ and $\langle s,t \rangle$ respectively. Since $I_U\subset Q_1\subset \langle u,v \rangle^m$  and $I_U$ is generated in bidegree $(2,1)$,  $Q_1$ must contain at least one element of bidegree $(0,1)$. If $Q_1$ contains exactly one element $p(u,v)$ of bidegree  $(0,1)$, then $V$ is contained in the fiber of  $\Sigma_{2,1}$ over the point $V(p(u,v))$, which contradicts the basepoint free assumption. Therefore $Q_1$ must contain two independent linear forms in $u,v$ and hence $Q_1=\langle u,v \rangle$. 

Since $I_U\subset Q_2$ and $I_U$ contains elements of bidegree $(2,1)$,  $Q_2$ must contain at least one element of bidegree $(2,0)$. If $Q_2$ contains exactly one element $q(s,t)$ of bidegree  $(2,0)$, then $V$ is contained in the fiber of  $\Sigma_{2,1}$ over the point $V(q(s,t))$, which contradicts the basepoint free assumption. If $Q_2$ contains exactly two elements of bidegree  $(2,0)$ which share a common linear factor $l(s,t)$, then $I_U$ is contained in the ideal $\left<l(s,t)\right>$, which contradicts the basepoint free assumption as well.  Since the bidegree $(2,0)$ part of $Q_2$ is contained in the linear span of $s^2,t^2,st$, it follows that the only possibilities consistent with the conditions above are  $Q_2=\langle p,q \rangle$ with $\sqrt{p,q} =\langle s,t \rangle$ or $Q_2=\langle s^2,t^2,st \rangle$.
\end{proof}

\begin{prop}\label{PD2}
For each type of minimal free resolution of $I_U$ with $U$ basepoint
free, the embedded primes of $I_U$ are as in Table 1.
\end{prop}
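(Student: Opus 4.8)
The plan is to combine the list of candidate embedded primes in Lemma~\ref{PD1} with the resolutions of Table~\ref{T2}, the homological detection of embedded primes, and the explicit differentials obtained in \S 3--5.

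First I would show that, apart from $\mm$, only primes of the form $P_i=\langle s,t,l_i(u,v)\rangle$ can be embedded. Since $I_U$ is generated in bidegree $(2,1)$ one has $I_U\subseteq\langle u,v\rangle$, and the Hilbert functions of Proposition~\ref{hilb} give $h_{4,1}=0$ in every type, hence $h_{i,j}=0$ for all $i\ge4,\ j\ge1$; thus $(I_U)_{4,1}=R_{4,1}$ and $\langle s,t\rangle^{4}\langle u,v\rangle\subseteq I_U$, so that $I_U:\langle s,t\rangle^{\infty}=\langle u,v\rangle$. Saturating by $\langle s,t\rangle^{\infty}$ deletes exactly the primary components whose radical contains $\langle s,t\rangle$ and keeps $\langle u,v\rangle$ together with any component of radical $\langle u,v,l(s,t)\rangle$; because the result is the prime $\langle u,v\rangle$ and an irredundant primary decomposition contains no component contained in another, there is no such component, and by Lemma~\ref{PD1} every embedded prime is $\mm$ or some $P_i$.

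For $\mm$: by Auslander--Buchsbaum, $\mm\in\Ass(R/I_U)$ iff $\depth(R/I_U)=0$ iff $\mathrm{pd}_R(R/I_U)=4$, which from the lengths of the resolutions in Table~\ref{T2} happens precisely in Types 1--4. For the primes $P_i$ I would use \cite{ehv}: a codimension three prime is associated to $R/I_U$ iff it is associated to $\mathrm{Ext}^3_R(R/I_U,R)$, and since $\mathrm{Ext}^3_R(R/I_U,R)_P=\mathrm{Ext}^{3}_{R_P}((R/I_U)_P,R_P)$ for $P$ of codimension three, the codimension three primes associated to this $\mathrm{Ext}$ module are exactly the $P_i$ that are associated to $R/I_U$. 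For Types 5a and 5b this is Corollary~\ref{T5PD} (here $\mathrm{Ext}^3=\coker\phi_3^{t}$, so Proposition~20.6 of \cite{ebig} applies directly), and for Type 6 it is part of Proposition~\ref{LS2}. For Types 3 and 4 the differentials are explicit (Theorem~\ref{T3res}, Theorem~\ref{Type4MC}); working in the normal forms established there one computes the primary decomposition of $I_U$ directly and finds no $P_i$ in Type 3 and exactly one, $\langle s,t,l_1(u,v)\rangle$ with $l_1$ the linear form cut out by the distinguished conic, in Type 4.

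The main obstacle is Types 1 and 2, where the differentials are not known. Here I would pass to local cohomology: graded local duality gives $H^1_{\mm}(R/I_U)\cong\mathrm{Ext}^3_R(R/I_U,R)^{\vee}(2,2)$, so $R/I_U$ has an embedded prime $P_i$ if and only if $H^1_{\mm}(R/I_U)\ne0$, i.e. if and only if $R/I_U$ modulo its finite length submodule is not Cohen--Macaulay. For Type 1 (no bidegree $(0,2)$ first syzygy) the assertion is that this quotient is Cohen--Macaulay, which I would establish either by a Buchsbaum--Eisenbud acyclicity check on the dual of the numerical Type 1 complex, or by analysing the filtration $0\to(I_U:\langle u,v\rangle^{\infty})/I_U\to R/I_U\to R/(I_U:\langle u,v\rangle^{\infty})\to0$, whose first term is a $k[s,t]$-module by Lemma~\ref{PDD1}. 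For Type 2 the bidegree $(0,2)$ syzygy yields, by Lemma~\ref{02syz}, a plane $\P(W)\subseteq\P(U)$ with $\P(W)\cap\Sigma_{2,1}$ a smooth conic; comparing $I_W$ with the conic-case resolution of \cite{cds} and tracking the embedded prime of $I_W$ after adjoining the fourth generator $p_3$ yields the unique $P_1$. The Cohen--Macaulayness check in Type 1 and the persistence of the embedded prime under adjoining $p_3$ in Type 2 are the places where the substantive work lies.
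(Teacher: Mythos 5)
Your outline agrees with the paper for $\mm$ (projective dimension four in Types 1--4, hence $\mathrm{Ext}^4\ne 0$ and $\mm\in\Ass$ by local duality) and for Types 5 and 6 (deferring to Corollary~\ref{T5PD} and Proposition~\ref{LS2}), and your saturation argument $I_U:\langle s,t\rangle^{\infty}=\langle u,v\rangle$ to exclude primes of the form $\langle u,v,l(s,t)\rangle$ is a clean alternative to the paper's route. But the proposal leaves a genuine gap exactly where you flag it: Types 1 and 2. You propose to detect the $P_i$'s by showing $H^1_{\mm}(R/I_U)$ vanishes (Type 1) or produces one embedded point (Type 2), and you offer two candidate strategies, but neither is carried out, and one of them needs care — $H^1_{\mm}(R/I_U)\neq 0$ is necessary but not sufficient for a codimension-three embedded prime, since $\mathrm{Ext}^3$ could be nonzero but of finite length (supported only at $\mm$). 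What is needed is a way to decide whether $\mathrm{Ext}^3$ has a codimension-three associated prime from only the numerical data of the Type 1/2 resolutions.

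The paper's missing idea is a Hilbert-polynomial comparison in the coarse $\Z$-grading. Writing $I_U=I_1\cap I_2\cap I_3$ with $I_1=\langle u,v\rangle\cap\langle s,t\rangle^2$ the intersection of minimal primary components, $I_2$ the codimension-three embedded components, and $I_3$ primary to $\mm$, one has $\HP(R/I_U)=\HP(R/(I_1\cap I_2))$ because $I_3$ is $\mm$-primary. Reading $\HP(R/I_U)$ off the $\Z$-graded version of each Table~2 resolution and comparing with $\HP(R/I_1)=4t+2$ gives $\HP\bigl(I_1/(I_1\cap I_2)\bigr)=0$ in Types 1 and 3 (so $I_2=R$, no embedded $P_i$) and $=1$ in Types 2 and 4 (so $I_1/(I_1\cap I_2)$ is supported at a single point, giving exactly one $P_i$, identified as $\langle s,t,l(u,v)\rangle$ by looking at which bidegrees support the module). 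This uses only the ranks and shifts, so it works uniformly for Types 1--4 without any explicit differentials; in particular it subsumes the ``direct computation'' you invoke for Types 3 and 4 and resolves the open part of your argument for Types 1 and 2.
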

\begin{proof}
First observe that $\mathfrak{m}=\langle s,t,u,v \rangle$ 
is an embedded prime for
each of Type 1 to Type 4. This follows since the respective
free resolutions have length four, so 
\[
Ext^4_R(R/I_U,R) \ne 0.
\]
By local duality, this is true iff $H^0_{\mathfrak{m}}(R/I_U) \ne 0$ 
iff $\mathfrak{m} \in \Ass(I_U)$. Since the resolutions for Type 5
and Type 6 have projective dimension less than four, 
this also shows that in Type 5 and Type 6, $\mathfrak{m} \not\in \Ass(I_U)$.
Corollary~\ref{T5PD} and Proposition~\ref{LS2} show the embedded 
primes for Type 5 and Type 6 are as in Table 1.

Thus, by Lemma~\ref{PD1}, all that remains is to study primes of
the form $\langle s,t,L(u,v) \rangle$ and $\langle u,v,L(s,t) \rangle$
for Type 1 through 4. For this, suppose 
\[
I_U = I_1 \cap I_2 \cap I_3, \mbox{ where}
\]
\begin{enumerate}
\item $I_1$ is the intersection of primary components corresponding 
to the two minimal associated primes identified in Lemma~\ref{PDD1}. 
\item $I_2$ is the intersection of embedded primary components 
not primary to $\mathfrak{m}$.
\item $I_3$ is primary to $\mathfrak{m}$.
\end{enumerate}
By Lemma~\ref{PDD1},
if $I_1 = \langle u,v \rangle \cap \langle p,q \rangle$ with 
$\sqrt{p,q} = \langle s,t \rangle$, then $I_1$ is basepoint free
and consists of four elements of bidegree $(2,1)$, thus $I_U = I_1$ and
has Type 6 primary decomposition. So we may assume $I_1 =\langle u,v \rangle \cap \langle s,t\rangle^2$.
Now we switch gears and consider all ideals in the $\mathbb{Z}$--grading
where the variables have degree one. In the  $\mathbb{Z}$--grading
\[
HP(R/I_1,t) = 4t+2.
\]
Since the Hilbert polynomials of $R/(I_1\cap I_2)$ and 
$R/I_U$ are identical, we can compute the Hilbert polynomials of
$R/(I_1\cap I_2)$ for Type 1 through 4 using Theorems~\ref{T1T2res}, 
\ref{Type4MC} and  \ref{T3res}. For example, in Type 1, the
bigraded minimal free resolution is
\[
0 \leftarrow I_U \leftarrow (-2,-1)^4 \longleftarrow \begin{array}{c}
 (-2,-4)\\
 \oplus \\
(-3,-2)^4\\
 \oplus \\
(-4,-1)^2\\
\end{array} \longleftarrow 
\begin{array}{c}
 (-3,-4)^2\\
 \oplus \\
(-4,-2)^3\\
 \end{array} \longleftarrow 
(-4,-4) \leftarrow 0. 
\]
Therefore, the  $\mathbb{Z}$--graded minimal free resolution is 
\[
0 \leftarrow I_U \leftarrow (-3)^4 \longleftarrow \begin{array}{c}
 (-6)\\
 \oplus \\
(-5)^6\\
 \end{array} \longleftarrow 
\begin{array}{c}
 (-7)^2\\
 \oplus \\
(-6)^3\\
 \end{array} \longleftarrow 
(-8) \leftarrow 0. 
\]
Carrying this out for the other types shows that
the  $\mathbb{Z}$--graded Hilbert polynomial of $R/(I_1 \cap I_2)$ is  
\begin{enumerate}
\item In Type 1 and Type 3:
\[
HP(R/(I_1 \cap I_2),t) = 4t+2.
\]
\item In Type 2 and Type 4:
\[
HP(R/(I_1 \cap I_2),t) = 4t+3.
\]
\end{enumerate}
In particular, for Type 1 and Type 3,  
\[
HP(R/I_1,t) = HP(R/(I_1 \cap I_2),t),
\] 
and in Type 2 and Type 4, the Hilbert polynomials differ by one:
\[
HP(I_1/(I_1 \cap I_2), t) = 1.
\]
Now consider the short exact sequence
\[
0 \longrightarrow I_1 \cap I_2 \longrightarrow I_1 \longrightarrow 
I_1/(I_1 \cap I_2) \longrightarrow 0.
\]
Since $I_1\cap I_2 \subseteq I_1$, in Type 1 and Type 3 where
the Hilbert Polynomials are equal, there can be no embedded
primes save $\mathfrak{m}$. In Type 2 and Type 4, 
since $HP(I_1/(I_1 \cap I_2),t)=1$, $I_1/(I_1 \cap I_2)$ is supported 
at a point of $\P^3$ which corresponds to a codimension three 
prime ideal of the form $\langle l_1,l_2,l_3\rangle$. Switching 
back to the fine grading, by Lemma~\ref{PD1}, this 
prime must be either $\langle s,t,l(u,v)\rangle$
or $\langle u,v,l(s,t) \rangle$. Considering the multidegrees 
in which the Hilbert function of $I_1/(I_1 \cap I_2)$ is nonzero
shows that the embedded prime is of type $\langle s,t,l(u,v)\rangle$.
\end{proof}

\section{The Approximation complex and Implicit equation of $X_U$}
The method of using moving lines and moving quadrics to 
obtain the implicit equation of a curve or surface was
developed by Sederberg and collaborators in \cite{sc},
\cite{sgd}, \cite{ssqk}. In \cite{cox2}, Cox gives a nice
overview of this method and makes explicit the connection
to syzygies. In the case of tensor product surfaces these
methods were first applied by Cox-Goldman-Zhang in \cite{cgz}. 
The approximation complex was introduced by Herzog-Simis-Vasconcelos in
\cite{hsv1},\cite{hsv2}.  From a mathematical perspective, 
the relation between the implicit equation and syzygies comes 
from work of Bus\'e-Jouanolou \cite{bj} and Bus\'e-Chardin \cite{bc} 
on approximation complexes and the Rees algebra; their work was
extended to the multigraded setting in  \cite{bot}, \cite{bdd}.
The next theorem follows from work of Botbol-Dickenstein-Dohm \cite{bdd} 
on toric surface parameterizations, and also from a more general
result of Botbol \cite{bot}. The novelty of our approach is 
that by obtaining an explicit description of the syzygies, we
obtain both the implicit equation for the surface and a description
of the singular locus. Theorem~\ref{SingX} gives a particularly
interesting connection between syzygies of $I_u$ and singularities of $X_U$. 
\begin{thm}\label{ImX}
If $U$ is basepoint free, then the implicit equation 
for $X_U$ is determinantal, obtained from the $4 \times 4$ minor 
of the first map of the approximation complex $\mathcal{Z}$
in bidegree $(1,1)$, except for Type 6, where $\phi_U$ is not birational.
\end{thm}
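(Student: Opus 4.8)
The plan is to deduce the statement from the implicitization theory of Botbol \cite{bot} and Botbol--Dickenstein--Dohm \cite{bdd} (built on Bus\'e--Jouanolou \cite{bj} and Bus\'e--Chardin \cite{bc}), once I pin down the bidegree $(1,1)$ strand of the approximation complex $\mathcal{Z}_\bullet$ associated to $S_I = \mathrm{Sym}_R(I_U)$ and check that it is acyclic in each of the six numerical types. With the bigrading recalled in \S 7, the term $(\mathcal{Z}_0)_{(1,1)} = R_{(1,1)} \otimes_k k[x_0,\dots,x_3]$ is free of rank $\dim_k R_{(1,1)} = 4$; since $I_U$ is minimally generated in bidegree $(2,1)$ and $HF((3,2),R/I_U) = 0$ for every type (the lemma following Lemma~\ref{ABlemma}), the space of syzygies feeding the first differential $d_1$ in this strand has dimension $4\cdot 4 - \dim_k R_{(3,2)} = 16 - 12 = 4$, and by \cite{bot} (or a direct look at the Koszul cycles) the strand vanishes in homological degree $\ge 2$. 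Hence $(\mathcal{Z}_\bullet)_{(1,1)}$ is a two-term complex of free $k[x_0,\dots,x_3]$-modules whose first map $d_1$ is a $4\times 4$ matrix of linear forms, so that its determinant is a $4\times 4$ minor of $d_1$.

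The second step is acyclicity together with the determinant formula. Because $U$ is basepoint free, \cite{bot},\cite{bdd} give that $(\mathcal{Z}_\bullet)_\nu$ is acyclic once $\nu$ lies past the bigraded regularity threshold of $S_I$, and that its determinant then equals $F^{\deg \phi_U}$ up to a nonzero scalar of $k$, where $F$ is the reduced implicit equation of $X_U$; with only two terms, this determinant is exactly $\det d_1$. So it remains to verify that $\nu = (1,1)$ is in the acyclicity range in each numerical type. For a bidegree $(2,1)$ map this threshold is governed by $(2,1)$ alone, but a self-contained verification is available type by type by reading the bigraded regularity of $I_U$, hence (via the usual comparison) of $S_I$, off the minimal free resolutions of Theorems~\ref{T1T2res}, \ref{Type4MC}, \ref{T3res}, \ref{T5exact} and Proposition~\ref{LS2}.

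The last step is to compute $\deg \phi_U$. The projection formula gives $\deg(\phi_U)\cdot\deg(X_U) = \OPP(2,1)^2 = 4$ on $\PP$, and since $p_0,\dots,p_3$ are linearly independent, $X_U \subseteq \P^3$ is nondegenerate, so $\deg X_U \ge 3$; as $3 \nmid 4$, this forces $\deg X_U \in \{2,4\}$, i.e. $\deg \phi_U \in \{2,1\}$. In Types 3, 4 and 5 the explicit normal forms established above ($I_U = \langle ps, pt, p_2, p_3\rangle$ or $\langle pu, pv, p_2, p_3\rangle$, suitably reduced) let one recover both rulings of $\PP$ from ratios of the $p_i$, so $\phi_U$ is birational and $\det d_1 = c\,F$ with $c\in k^\times$. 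In Types 1 and 2 (no linear syzygies), $\deg X_U = 2$ is impossible: a Stein factorization of a hypothetical $2{:}1$ map would realize $I_U$ in the Type~6 normal form $\langle q_1u, q_1v, q_2u, q_2v\rangle$ of Proposition~\ref{LS2}, contradicting the absence of linear syzygies; hence again $\phi_U$ is birational. Finally, for Type~6 itself, Proposition~\ref{LS2} shows $X_U = \Sigma_{1,1}$ and $\phi_U$ is generically $2{:}1$, so $\det d_1 = c\,F^2$ and $\phi_U$ is not birational --- this is the exception noted in the statement. The anticipated main obstacle is the acyclicity check of the second paragraph: certifying, for each numerical type, that $(1,1)$ is past the bigraded regularity of $S_I$, where the fine resolution data of \S\S3--5 is essential and where Types 1 and 2 --- for which only the numerical resolution is in hand --- require extra care.
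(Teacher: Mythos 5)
Your proposal takes essentially the paper's route (defer to the implicitization machinery of Bus\'e--Jouanolou, Bus\'e--Chardin, Botbol and Botbol--Dickenstein--Dohm, then identify the $(1,1)$-strand and the degree of $\phi_U$), and the rank computation via $h_{3,2}=0$ is a clean way to see that $d_1^{(1,1)}$ is a $4\times 4$ matrix, which the paper leaves implicit and instead illustrates on Example~\ref{exLast}. However there are two substantive gaps.

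First, the acyclicity step. The paper does not attempt to re-derive the threshold; it invokes directly the bound from \cite{bot} (and Corollary~14 of \cite{bdd} for toric surfaces with $0$-dimensional base locus), which for four sections of bidegree $(2,1)$ yields $\mu=(1,1)$. Your proposed alternative --- reading off the bigraded regularity of $I_U$ from the resolutions of \S\S3--5 and transferring it to $S_I$ ``via the usual comparison'' --- is not a step you can wave at: the acyclicity criterion for $\mathcal{Z}^\mu$ depends on vanishing of local cohomology of the symmetric algebra $S_I$, whose regularity is not controlled in any routine way by $\mathrm{reg}(I_U)$ (the approximation complex is precisely the tool one uses to get at $S_I$, so there is a circularity risk). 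For Types 1 and 2, where you only have numerical Betti data in hand, this would be genuinely delicate. You should simply cite \cite{bot}'s explicit threshold, as the paper does, rather than promise a regularity argument you have not supplied.

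Second, the birationality claim. Your degree count $\deg\phi_U\cdot\deg X_U=\OPP(2,1)^2=4$ is right, but the nondegeneracy bound should be $\deg X_U\ge 2$, not $\ge 3$ (a nondegenerate surface in $\P^3$ is just a non-plane hypersurface). The real issue is the step ruling out $\deg X_U=2$ outside Type~6. Proposition~\ref{LS2}(4) only identifies Type~6 with $X_U\simeq\Sigma_{1,1}$, i.e.\ with the \emph{smooth} quadric; your ``Stein factorization'' remark does not address the possibility that $X_U$ is a quadric cone (rank-3 quadric), which would also give $\deg\phi_U=2$ without producing two independent bidegree-$(0,1)$ syzygies by the $\Sigma_{1,1}$ argument. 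One can in fact rule out the cone by a UFD factorization argument: a relation $p_0p_1=p_2^2$ with $p_i\in R_{(2,1)}$ forces (after bookkeeping on bidegrees) all of $p_0,p_1,p_2$ to share a common $(0,1)$-linear factor, which contradicts basepoint freeness; but this needs to be said, and your Stein factorization sentence does not do it.

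\end{document}
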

\subsection{Background on approximation complexes}
We give a brief overview of approximation complexes,
for an extended survey see \cite{c}. For
\[
I = \langle f_1, \ldots, f_n \rangle \subseteq R=k[x_1,\ldots x_m],
\]
let $K_i \subseteq \Lambda^i(R^n)$ be the kernel of 
the $i^{th}$ Koszul differential on $\{f_1, \ldots, f_n\}$, 
and $S = R[y_1,\ldots, y_n]$. Then
the approximation complex $\mathcal{Z}$ has $i^{th}$ term 
\[
\mathcal{Z}_i = S \otimes_R K_i.
\]
The differential is the Koszul differential on $\{y_1, \ldots, y_n\}$.
It turns out that $H_0(\mathcal{Z})$ is $S_I$ and  the higher homology
depends (up to isomorphism) only on $I$. For $\mu$ a bidegree in $R$, define
\[
\mathcal{Z}^{\mu} \mbox{ : } \cdots \longrightarrow 
k[y_1,\ldots, y_n]\otimes_k (K_i)_\mu \stackrel{d_i}{\longrightarrow}k[y_1,\ldots, y_n]\otimes_k (K_{i-1})_\mu  \stackrel{d_{i-1}}{\longrightarrow}\cdots
\]
If the bidegree $\mu$ and base locus of $I$ satisfy certain conditions,
then the determinant of $\mathcal{Z}^{\mu}$ is a power of the implicit
equation of the image. This was first proved in \cite{bj}. In Corollary 14 
of \cite{bdd}, Botbol-Dickenstein-Dohm give a specific bound for $\mu$ in
the case of a toric surface and map with zero-dimensional base locus and
show that in this case the gcd of the maximal minors of $d_1^{\mu}$ is
the determinant of the complex. For four sections of bidegree $(2,1)$,
the bound in \cite{bot} shows that $\mu =(1,1)$. To make things concrete, we 
work this out for Example~\ref{ex1}.
\begin{exm}\label{exLast}
Our running example is $U = \spn \{s^2u, s^2v, t^2u, t^2v+stv \}$.
Since $K_1$ is the module of syzygies on $I_U$, which is
generated by the columns of 
\[
\left[\begin{matrix}
-v &-t^2 &0&     0 &-tv \\
u & 0 & -st-t^2 &0 & 0 \\
 0 & s^2 & 0 &-sv-tv & -sv\\
0 & 0 & s^2 &tu & su 
\end{matrix}
 \right]
\]
The first column encodes the relation $ux_1-vx_0 =0$, then next four 
columns the relations
\[
\begin{array}{ccc}
s^2x_2-t^2x_0 &=& 0 \\
s^2x_3 -(st+t^2)x_1 &=&0 \\
tu x_3 -(sv+tv)x_2 &=&0\\
su x_3 -svx_2-tvx_0 &=& 0
\end{array}
\]
If we were in the singly graded case, we would need to use $\mu = 2$, 
and a basis for $\mathcal{Z}_1^2$ consists of $\{s,t,u,v\} \cdot ux_1-vx_0$,
and the remaining four relations. With respect to the ordered basis 
$\{s^2,st,t^2, su,sv,tu,tv, u^2,uv,v^2\}$ for $R_2$ and writing
$\cdot$ for $0$, the matrix for 
$d_1^{2} : \mathcal{Z}_1^2 \longrightarrow \mathcal{Z}_0^2$ is
\vskip .02in
\[
\left[\begin{matrix}
\cdot & \cdot & \cdot & \cdot & x_2   & x_3 & \cdot& \cdot\\
\cdot & \cdot & \cdot & \cdot & \cdot &-x_1 & \cdot& \cdot\\
\cdot & \cdot & \cdot & \cdot & -x_0 & -x_1 & \cdot& \cdot\\
x_1 & \cdot & \cdot & \cdot & \cdot & \cdot & \cdot& x_3\\
-x_0 & \cdot & \cdot & \cdot & \cdot & \cdot & -x_2& -x_2\\
\cdot & x_1 & \cdot & \cdot & \cdot & \cdot &  x_3 & \cdot\\
\cdot & -x_0 & \cdot & \cdot & \cdot & \cdot & -x_2 & -x_0\\
\cdot &  \cdot & x_1 & \cdot & \cdot & \cdot & \cdot& \cdot \\
\cdot &  \cdot &-x_0 & x_1 & \cdot & \cdot & \cdot & \cdot\\
\cdot &  \cdot &\cdot &-x_0  & \cdot & \cdot & \cdot & \cdot\\
\end{matrix}
 \right]
\]
\vskip .05in
However, this matrix represents all the first syzygies of total
degree two. Restricting to the submatrix of bidegree $(1,1)$ syzygies
corresponds to choosing rows indexed by $\{su,sv,tu,tv\}$, yielding
\vskip .02in
\[
\left[\begin{matrix}
x_1 & \cdot & \cdot& x_3\\
-x_0 & \cdot & -x_2& -x_2\\
\cdot & x_1 &  x_3 & \cdot\\
\cdot & -x_0 & -x_2 & -x_0\\
\end{matrix}
 \right]
\]
\end{exm}
We now study the observation made in the introduction,
that linear syzygies manifest in a linear singular locus. 
Example~\ref{exLast} again provides the key intuition: a
linear first syzygy gives rise to two columns of $d^{(1,1)}_1$.
\begin{thm}\label{SingX}
If $U$ is basepoint free and $I_U$ has a unique
linear syzygy, then the codimension one singular locus of $X_U$ 
is a union of lines.
\end{thm}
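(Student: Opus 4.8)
The plan is to reduce to a short list of normal forms and then read off $\Sing X_U$ from an explicit implicit equation. By Theorem~\ref{AllLins}, the hypothesis that $I_U$ has a unique linear syzygy means precisely one of two things: $I_U$ has a unique first syzygy of bidegree $(0,1)$ (Types 5a, 5b), or a unique first syzygy of bidegree $(1,0)$ (Types 3, 4). In all four cases $\phi_U$ is birational onto $X_U$, so Theorem~\ref{ImX} applies: the implicit equation $F$ is the determinant of the matrix $d_1^{(1,1)}$, the bidegree $(1,1)$ piece of the first map of the approximation complex $\mathcal Z$. The first step is to pin down the columns of $d_1^{(1,1)}$. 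Counting with the Betti numbers in Table~\ref{T2} shows $\dim_k(K_1)_{(1,1)}=4$, so $d_1^{(1,1)}$ is $4\times 4$, and a basis of $(K_1)_{(1,1)}$ is given by the two bidegree $(1,1)$ first syzygies of $I_U$ together with the unique linear syzygy $\sigma$ multiplied by the two complementary variables: by $s$ and $t$ in the $(0,1)$ case (Lemma~\ref{LS1}), or by $u$ and $v$ in the $(1,0)$ case (Lemma~\ref{LS2-10} and Lemma~\ref{LS4-10}).

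The structural core of the argument, already visible in Example~\ref{exLast}, is that the two columns of $d_1^{(1,1)}$ coming from $\sigma$ are degenerate. Writing $d_1(\sigma)=L_1 z_1+L_2 z_2$, where $\{z_1,z_2\}$ is the complementary pair of variables and $L_1,L_2$ are linearly independent linear forms in $x_0,\dots,x_3$, these two columns equal $L_1$ and $L_2$ times a fixed pair of standard basis vectors. A Laplace (Cauchy--Binet) expansion of $\det d_1^{(1,1)}$ along these two columns then yields
\[
F \;=\; Q_1\,L_1^2 + Q_2\,L_1 L_2 + Q_3\,L_2^2,
\]
where $Q_1,Q_2,Q_3$ are, up to sign, $2\times 2$ minors of the $4\times 2$ matrix formed by the remaining two columns, hence quadrics in $x_0,\dots,x_3$. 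In particular $F\in\mathfrak q^2$, where $\mathfrak q=\langle L_1,L_2\rangle$ is the ideal of a line $\ell_0\subseteq\P^3$; since $\partial_i(\mathfrak q^2)\subseteq\mathfrak q$ for all $i$, the whole line $\ell_0$ lies in $\Sing X_U$. (In the running example $\ell_0=\V(x_0,x_1)$, one of the three lines in Theorem~\ref{SingX}.)

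It remains to understand the residual locus $\overline{\Sing X_U\setminus\ell_0}$. Geometrically, away from $\ell_0$ the rational map defined by $(L_1:L_2)$ fibers $\P^3\setminus\ell_0$ into (open subsets of) hyperplanes $\P^2_\lambda$, and on each $\P^2_\lambda$ the surface $X_U$ restricts to the double line $\ell_0$ together with the plane conic $\V(\lambda^2 Q_1+\lambda Q_2+Q_3)$; a singular point of $X_U$ off $\ell_0$ must therefore be a singular point of one of these conics, so it lies where the conic degenerates into a pair of lines or a double line. To turn this into a proof — and in particular to rule out a residual component that is a conic or a twisted cubic — I would substitute the normal forms established earlier: Lemma~\ref{LS1} together with the explicit differential $\phi_2$ of Proposition~\ref{LS3} in the $(0,1)$ case, and the coordinate reductions of Theorems~\ref{T3res} and~\ref{Type4MC} in the $(1,0)$ case, which express $Q_1,Q_2,Q_3$ as products and sums of linear forms. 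One then computes $\V(\partial_0 F,\dots,\partial_3 F)$ directly on the affine charts $L_i\neq 0$ in each of the four cases; the computation is routine (it is carried out for the running example in the discussion preceding the theorem). I expect the main obstacle to be exactly this last bookkeeping step: one must invoke basepoint-freeness — just as in Corollary~\ref{NoMix} and Lemma~\ref{PDD1}, to force the linear forms $l_i$, $L_i$ occurring in the normal form to be nonzero and pairwise non-proportional — to guarantee that each apparent component of the residual locus really is a line and that none degenerates to a curve of higher degree. Together with $\ell_0$ this exhibits the codimension one singular locus of $X_U$ as a union of lines.
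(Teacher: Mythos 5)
Your proof follows the paper's argument essentially step for step: identify the two degenerate columns of $d_1^{(1,1)}$ contributed by the unique linear syzygy, Laplace-expand along them to write $F$ as an element of $\langle L_1,L_2\rangle^2$ (so that $\ell_0=\V(L_1,L_2)\subseteq\Sing X_U$), and then reduce the residual locus to an explicit Jacobian computation on the normal forms from Proposition~\ref{LS3} and Theorems~\ref{T3res}, \ref{Type4MC}, invoking basepoint-freeness to keep the linear forms nondegenerate — exactly what the paper does, carrying out the Jacobian check explicitly for Types 3--4 and declaring it ``tedious but straightforward'' for Types 5a/5b. Your pencil-of-conics observation — that a singular point of $X_U$ off $\ell_0$ is necessarily a singular point of the conic $\V(\lambda^2 Q_1+\lambda Q_2+Q_3)$ in the hyperplane $L_1=\lambda L_2$ — is a nice conceptual gloss not present in the paper, though as you correctly acknowledge it does not by itself rule out a residual conic or higher-degree curve, so the normal-form calculation you defer is still the load-bearing step.
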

\begin{proof}
Without loss of generality we assume the linear syzygy involves the first two 
generators $p_0,p_1$ of $I_U$, so that in the two remaining 
columns corresponding to the linear syzygy the only nonzero 
entries are $x_0$ and $x_1$, which appear exactly as in 
Example~\ref{exLast}. Thus, in bidegree $(1,1)$, the matrix for $d^{(1,1)}_1$ 
has the form
\[
\left[\begin{matrix}
x_1 & \cdot & \ast & \ast\\
-x_0 & \cdot & \ast & \ast\\
\cdot &x_1 &  \ast & \ast\\
\cdot &-x_0 &  \ast & \ast\\
\end{matrix}
 \right]
\]
Computing the determinant using two by two minors in the two left
most columns shows the implicit equation of $F$ is of the form
\[
x_1^2 \cdot f + x_0x_1 g + x_0^2 h,
\]
which is singular along the line $\V(x_0,x_1)$. To show that
the entire singular locus is a union of lines when $I_U$ has 
resolution Type $\in \{3,4,5\}$, we must analyze the structure
of $d_1^{(1,1)}$. For Type 3 and 4, Theorems~\ref{T3res} and~\ref{Type4MC}
give the first syzygies, and show that the implicit equation 
for $X_U$ is given by the determinant of 
\[
\left[\begin{matrix}
 -x_1 & \cdot & x_2 & x_3  \\
 \cdot & -x_1 & a_1x_2-b_1x_0 & a_1x_3-c_1x_0  \\
 x_0 & \cdot & a_2x_2-b_0x_1&a_2x_3-c_0x_1\\
 \cdot & x_0 & a_3x_2-b_2x_0-b_3x_1 & a_3x_3-c_2x_0-c_3x_1\\
\end{matrix}
 \right].
\]
We showed above that $\V(x_0,x_1) \subseteq \Sing(X_U)$. Since 
$X_U \setminus \V(x_0,x_1) \subseteq U_{x_0} \cup U_{x_1}$, it
suffices to check that $X_U \cap U_{x_0}$ and $X_U \cap U_{x_1}$ 
are smooth in codimension one. $X_U \cap U_{x_0}$ is defined by
\[
\begin{array}{ccc}
-{c}_{1} {y}_{2}+{b}_{1} {y}_{3}& = &(-{b}_{3} {c}_{0}+{b}_{0} {c}_{3}) {y}_{1}^{4}+({a}_{3} {c}_{0}-{a}_{2} {c}_{3}) {y}_{1}^{3} {y}_{2} \\
                             & + & (-{a}_{3} {b}_{0}+{a}_{2} {b}_{3}) {y}_{1}^{3}{y}_{3}  +  +(-{b}_{2} {c}_{0}+{b}_{0} {c}_{2}){y}_{1}^{3}\\
                             &+ & ({a}_{1} {c}_{0}-{a}_{2} {c}_{2}-{c}_{3}) {y}_{1}^{2} {y}_{2}+(-{a}_{1} {b}_{0}+{a}_{2}{b}_{2}+{b}_{3}) {y}_{1}^{2} {y}_{3}\\
 & + &(-{b}_{1} {c}_{0}+{b}_{0} {c}_{1}) {y}_{1}^{2}+(-{a}_{2} {c}_{1}-{c}_{2}) {y}_{1} {y}_{2}+({a}_{2} {b}_{1}+{b}_{2}) {y}_{1} {y}_{3}.
\end{array}
\]
By basepoint freeness, $b_1$ or $c_1$ is nonzero, as is $(-{b}_{3} {c}_{0}+{b}_{0} {c}_{3})$, 
so in fact $X_U \cap U_{x_0}$ is smooth. A similar calculation shows that $X_U \cap U_{x_1}$
is also smooth, so for Type 3 and Type 4, $\Sing(X_U)$ is a line. 
In Type 5 the computation is more cumbersome: with notation as in 
Proposition~\ref{LS3}, the relevant $4 \times 4$ submatrix is 
\[
\left[\begin{matrix}
 x_1 & \cdot & a_1x_3-a_3x_2 & \beta_1\alpha_2x_1 + \alpha_1\alpha_2x_0  \\
 -x_0 & \cdot & b_1x_3-b_3x_2 & \beta_1\beta_2x_1 + \alpha_1\beta_2x_0  \\
 \cdot & x_1 & \beta_1\alpha_2x_1 + \alpha_1\alpha_2x_0 &a_2x_3-a_4x_2\\
 \cdot & -x_0 & \beta_1\beta_2x_1 + \alpha_1\beta_2x_0 &b_2x_3-b_4x_2\\
\end{matrix}
 \right].
\]
A tedious but straightforward calculation shows that $\Sing(X_U)$ 
consists of three lines in Type 5a and  a pair of lines in Type 5b.
\end{proof}
\begin{thm}\label{SingX2}
If $U$ is basepoint free, then the codimension one singular locus of $X_U$ 
is as described in Table 1.
\end{thm}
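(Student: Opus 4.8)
The plan is to split the six rows of Table~1 into three groups. For Types 3, 4, 5a and 5b the ideal $I_U$ has a unique linear first syzygy, so Theorem~\ref{SingX} already applies: its proof shows the codimension one singular locus of $X_U$ is a union of lines and, case by case, produces the explicit linear forms cutting them out --- one line for Types 3 and 4, three lines for Type 5a, two lines for Type 5b --- which is exactly what Table~1 records. For Type 6, Proposition~\ref{LS2} identifies $X_U$ with the smooth quadric $\Sigma_{1,1}$, so $\Sing(X_U)=\emptyset$; here $\phi_U$ is the $2{:}1$ Segre parametrization and hence not birational, consistent with the exclusion of Type 6 in Theorem~\ref{ImX}. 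Thus everything reduces to Types 1 and 2, where $I_U$ has no linear first syzygy.

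For Types 1 and 2 the tool is the approximation complex. By Theorem~\ref{ImX} the implicit equation $F$ of $X_U$ is the determinant of the $4\times 4$ matrix $d_1^{(1,1)}$, whose columns correspond to a $k$-basis of $(K_1)_{(1,1)}$. The Hilbert function computations of \S4 give $h_{3,2}=0$, hence $\dim_k (K_1)_{(1,1)} = 4$, so by Proposition~\ref{syznoLin} this space is spanned precisely by the four bidegree $(1,1)$ first syzygies of $I_U$ (the bidegree $(2,0)$, $(0,2)$ and $(0,3)$ syzygies cannot contribute in bidegree $(1,1)$). So the first step is to pin down a normal form for $I_U$ in each type. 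In Type 2 this is supplied by the unique bidegree $(0,2)$ syzygy: by Lemma~\ref{02syz} there is a $\P(W)\simeq\P^2\subseteq\P(U)$ meeting $\Sigma_{2,1}$ in a smooth conic, and the proof of that lemma gives $W=\spn\{\alpha v,\ \beta v-\alpha u,\ \beta u\}$ with $\alpha,\beta\in R_{2,0}$ independent; after normalizing the $s,t$ coordinates (distinguishing $\gcd(\alpha,\beta)=1$ from the case where $\alpha,\beta$ share a linear factor) and reducing the fourth generator modulo $W$, $I_U$ depends on only a few parameters. In Type 1, the open dense stratum among basepoint free $U$, a normal form can be read off from the bigeneric initial ideal $G_2$ of Theorem~\ref{T1T2res} (lifting the monomial generators to nearby generic ones), or one argues directly that a generic four--dimensional $U$ has this type. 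With the normal form fixed one writes the four bidegree $(1,1)$ syzygies explicitly, assembles $d_1^{(1,1)}$, expands its determinant to obtain $F$, and extracts the reduced one--dimensional component of $\V(F,\partial F/\partial x_0,\ldots,\partial F/\partial x_3)$; the outcome is an irreducible nondegenerate twisted cubic $T$ in Type 1 and a smooth conic together with a line in Type 2.

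As a sanity check, and as an independent route for Type 1, one can view $X_U$ as a generic projection to $\P^3$ of the smooth surface $Y\subseteq\P^5$ of degree four obtained by embedding $\PP$ via $|\OPP(2,1)|$; since $Y$ is rational with sectional genus zero, the double--point formula predicts a rational double curve of degree $\binom{3}{2}-0=3$, i.e.\ a twisted cubic, recovering the classical description of Edge and Salmon and the generic case of \cite{egl}, with Type 2 its first degeneration as in \cite{gl}. The main obstacle is the first step for Types 1 and 2: unlike Types 3--6, the paper records no explicit differentials in the minimal free resolution here, so one must first extract a rigid enough normal form for $I_U$ --- and keep track of the handful of degenerate sub--cases --- before the determinant of $d_1^{(1,1)}$ and its singular scheme can be computed; once the normal form is in hand, the remainder is a mechanical (if lengthy) calculation, in the style of the Type 5 computation at the end of the proof of Theorem~\ref{SingX}.
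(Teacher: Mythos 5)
Your treatment of Types 3, 4, 5, 6 is exactly the paper's: Theorem~\ref{SingX} for the linear-syzygy types and Proposition~\ref{LS2} for Type 6. For Type 2, your plan also matches the paper's actual proof — it uses Lemma~\ref{02syz} to put $p_0,p_1,p_2$ in the form $\alpha u,\ \beta v,\ \alpha v+\beta u$, normalizes $\alpha$ to $s^2$ or $st$, reduces $p_3$ modulo the other generators, assembles $d_1^{(1,1)}$ from the four bidegree $(1,1)$ syzygies, and computes the singular scheme, obtaining a conic plus a line. Where you genuinely diverge is Type 1: the paper simply cites Elkadi--Galligo--L\^e \cite{egl} for the generic case, whereas you propose the classical double-point formula for a generic projection of the degree-four rational surface $\PP\hookrightarrow\P^5$ (sectional genus zero gives double curve of degree $\binom{3}{2}-0=3$, a twisted cubic). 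That is a clean, conceptually satisfying alternative that avoids the approximation-complex computation entirely, and it explains \emph{why} the answer is a twisted cubic rather than merely exhibiting it. Two caveats. First, the bigeneric initial ideals relevant to Type 1 are $G_1$ and $G_1'$, not $G_2$ (the latter belong to Type 2). Second, and more substantively, ``reading off a normal form by lifting the monomial generators of $bigin(I_U)$'' is not a rigorous step: $bigin(I_U)$ controls graded Betti numbers and the Hilbert function, but its monomial generators are not generators of $I_U$ and do not deform in a controlled way back to a normal form. If you take the computational route for Type 1 you would need to build the normal form from scratch (from the absence of linear and $(0,2)$ syzygies), which the paper avoids precisely by appealing to \cite{egl}; your double-point argument is the more solid of the two routes you offer and is what I would use there.
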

\begin{proof}
For a resolution of Type 3,4,5, the result follows from Theorem~\ref{SingX},
and for Type 6 from Proposition~\ref{LS2}. For the generic case (Type 1),
the result is obtained by Elkadi-Galligo-Le \cite{egl}, so
it remains to analyze Type 2. By Lemma~\ref{02syz}, the $(0,2)$
first syzygy implies that we can write $p_0, p_1, p_2$ as $\alpha
 u, \beta v, \alpha v + \beta u$ for some $\alpha, \beta \in R_{2,0}$.
Factor $\alpha$ as product of two linear forms in $s$ and $t$, so 
after a linear change of variables we may assume $\alpha = s^2$ or $st$.

If $\alpha = s^2$, write $\beta=(ms+nt)(m's+n't)$, and note that $n$ and $n'$ 
cannot both vanish, because then $\beta$ is a scalar multiple of $\alpha$, 
violating linear independence of the $p_i$. Thus, after a linear change 
of variables, we may assume $\beta$ is of the form $t(ks+lt)$, so the 
$p_i$'s are of the form
\[
\{s^2u, (ks+lt)tv, s^2v+(ks+lt)tu, p_3\}
\]
If $l=0$, then $I_U = \langle s^2u, kstv, s^2v+kstu, p_3 \rangle$, 
which is not basepoint free: if $s=0$, thus the first 3 polynomials vanish 
and $p_3$ becomes $t^2(au+bv)$ which vanishes for some  
$(u:v) \in \mathbb{P}^1$. So $l \ne 0$, and after a linear change of 
variables $t \mapsto \frac{t}{l}$ and  $u \mapsto lu$, we may assume $l=1$ and hence
\[
I_U = \langle s^2u, t^2v+kstv, s^2v+t^2u+kstu, p_3\rangle.
\]
Now consider two cases, $k=0$ and $k\not = 0$. In the latter, we may 
assume $k=1$: first replace $ks$ by $s$ and then replace $k^2u$ by $u$. 
In either case,  reducing $p_3$ by the other generators of $I_U$ shows
we can assume
\[
p_3=astu+bs^2v+cstv = s(atu+bsv+ctv).
\]
By Theorem~\ref{T1T2res} there is one first syzygy of bidegree $(0,2)$, two
first syzygies of bidegree $(2,0)$, and four first syzygies of bidegree 
$(1,1)$. A direct calculation shows that two of the bidegree $(1,1)$ 
syzygies are 
$$ \left[ \begin{array}{cc}atu+bsv+ctv &0 \\  0  & asu-btu+csv \\0 & btv  \\ -su
 & -tv  \end{array} \right]. $$ 
The remaining syzygies depend on $k$ and $a$. For example, if $k=a=0$, then the
remaining bidegree $(1,1)$ syzygies are 
$$ \left[ \begin{array}{cc}0 & -b^3tv \\  b^2su+c^2sv & c^3sv \\-b^2sv  & -b^2cs
v \\ bsv-ctv & b^2tu+bcsv-c^2tv   \end{array} \right], $$
and the bidegree $(2,0)$ syzygies are 
$$ \left[ \begin{array}{cc}0 & b^3t^2\\     bs^2+cst  & -c^3st \\0 & -b^3s^2  \\
  -t^2    & b^2s^2-bcst+c^2t^2    \end{array} \right] $$
Thus, if $k=a=0$, using the basis $\{ su, tu, sv, tv \}$ for $R_{1,1}$, the
matrix whose determinant gives the implicit equation for $X_U$ is 
\[
\left[
\begin{matrix}
-x_3 & 0 &  b^2x_1 & 0 \\
  0 & -bx_1 & 0 & b^2x_3\\
 bx_0 & cx_1 & c^2x_1-b^2x_2+bx_3& c^3x_1-b^2cx_2+bcx_3\\
 cx_0 & bx_2-x_3 & -cx_3 & -b^3x_0-c^2x_3 \\
\end{matrix}
\right]
\]
Since $k=a=0$, if both $b$ and $c$ vanish there will be a linear 
syzygy on $I_U$, contradicting our assumption. So suppose $b\ne 0$ and
scale the generator $p_3$ so $b=1$:
\[
\left[
\begin{matrix}
-x_3 & 0 &  x_1 & 0 \\
  0 & -x_1 & 0 & x_3\\
 x_0 & cx_1 & c^2x_1-x_2+x_3& c^3x_1-cx_2+cx_3\\
 cx_0 & x_2-x_3 & -cx_3 & -x_0-c^2x_3 \\
\end{matrix}
\right]
\]
Expanding along the top two rows by $2 \times 2$ minors as in the
proof of Theorem~\ref{SingX} shows that $X_U$ is singular along
${\bf V}(x_1,x_3)$, and evaluating the Jacobian matrix with $x_1=0$
shows this is the only component of the codimension one singular
locus with $x_1 = 0$. Next we consider the affine patch $U_{x_1}$. 
On this patch, the Jacobian ideal is 
\[
\langle (4 {x}_{3}+c^{2}) ({x}_{2}-2 {x}_{3}-c^{2}),({x}_{2}-2 {x}_{3}-c^{2}) ({x}_{2}+2 {x}_{3}),2 {x}_{0}-2 {x}_{2}{x}_{3}-{x}_{2} c^{2}+2 {x}_{3}^{2}+4 {x}_{3} c^{2}+c^{4}
\rangle
\]
which has codimension one component given by 
\[{\bf V}({x}_{2}-2 {x}_{3}-c^{2}, x_0-x_3^2),
\]
a plane conic. Similar calculations work for the other cases.
\end{proof}
\section{Connection to the dual scroll}
We close by connecting our work to the results of Galligo-L\^e
in \cite{gl}. First, recall that the ideal of $\Sigma_{2,1}$ is 
defined by the two by two minors of 
\[
\left[\begin{matrix} x_0 & x_1 & x_2 \\ x_3 & x_4 &x_5 
\end{matrix}\right].
\]
Combining this with the relations $x_1^2-x_0x_2$ and $x_4^2-x_3x_5$
arising from $\nu_2$ shows that the image of the map $\tau$ defined in 
Equation~\ref{e1} is the vanishing locus of the two by two minors of 
\begin{equation}\label{SV}
\left[\begin{matrix} x_0 & x_1 & x_3 & x_4 \\ x_1 & x_2 & x_4 &x_5 
\end{matrix}\right].
\end{equation}

Let $A$ denote the $4 \times 6$ matrix of 
coefficients of the polynomials defining $U$ in the monomial
basis above. We regard $\P(U) \hookrightarrow \P(V)$ via ${\bf a} \mapsto {\bf a} \cdot A$. 
Note that $I_U$ and the implicit equation of $X_U$  are independent of the choice of generators $p_i$ (\cite{c}).

The dual projective space of $\P(V)$ is $\P(V^*)$ where $V^*=Hom_k(V,k)$ and the projective subspace of $\P(V^*)$ orthogonal to $U$ is defined to be $\P((V/U)^*)=\P(U^\perp)$, where $U^\perp=\{f\in V^* |f(u)=0, \forall u\in U\}$ is algebraically described as the kernel of $A$. The elements
of $U^\perp$ define the space of linear forms in $x_i$
which vanish on $\P(U)$. In Example~\ref{ex1}  
$U = \spn \{s^2u,s^2v,t^2u,t^2v+stv \}$, so $A$ is the matrix
\[
\left[\begin{matrix} 
1 & 0 & 0 & 0 & 0 & 0 \\ 
0 & 0 & 0 & 1 & 0 & 0 \\ 
0 & 0 & 1 & 0 & 0 & 0 \\ 
0 & 0 & 0 & 0 & 1 & 1 \\
\end{matrix}\right],
\]
and $\P(U) = \V(x_1,x_4-x_5) \subseteq \P^5$.

The conormal variety $N(X)$ is the incidence variety defined as the closure of the set of pairs $\{(x,\pi)\in \P(V)\times \P(V^*)\}$ such that $x$ is a smooth point of $X$ and $\pi$ is an element of the linear subspace orthogonal to the tangent space $T_{X,x}$ in the sense described above. $N(X)$ is irreducible and  for a varieties $X$ embedded in $\P(V)\simeq \P^5$ the  dimension of $N(X)$ is 4. The image of the projection of $N(X)$ onto the factor $\P(V^*)$ is by definition the dual variety of $X$ denoted $X^*$.

Denote by $\widetilde{X_U}$ the variety $X_U$ re-embedded as a hypersurface of $\P(U)$.  Proposition 1.4 (ii) of \cite{CRS} applied to our situation reveals

\begin{prop}
If $\widetilde{X_U}^*\subset {\P^3}^*$ is a hypersurface which is swept by a one dimensional family of lines, then $\widetilde{X_U}^*$ is either a 2-dimensional scroll or else a curve.
\end{prop}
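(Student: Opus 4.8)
The plan is to verify that the dual variety $\widetilde{X_U}^*$ meets the hypotheses of Proposition 1.4 (ii) of \cite{CRS} and then transcribe that proposition's conclusion. First I would record the structural facts. Since $X_U = \phi_U(\PP)$ is the image of the irreducible variety $\PP$, it is irreducible, hence so is its re-embedding $\widetilde{X_U} \subseteq \P(U) \simeq \P^3$, which is an irreducible surface. The conormal variety $N(\widetilde{X_U}) \subseteq \P(U) \times \P(U^*)$ is irreducible, and $\widetilde{X_U}^*$ is by definition the closure of its image under the projection to $\P(U^*) \simeq (\P^3)^*$; the closure of the image of an irreducible variety being irreducible, $\widetilde{X_U}^*$ is irreducible.

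Next I would make the ruling explicit. Assuming $\widetilde{X_U}^*$ is swept out by a one-parameter family of lines $\{\ell_b\}_{b \in B}$ with $B$ an irreducible curve, one forms the incidence variety $\mathcal{I} = \{(b,\pi) \in B \times \P(U^*) : \pi \in \ell_b\}$, which is irreducible of dimension $2$ and dominates $\widetilde{X_U}^*$. Proposition 1.4 (ii) of \cite{CRS} applies exactly to an irreducible variety swept out by such a family of lines, and its conclusion is the stated dichotomy: either the general line of the family moves, so that the projection $\mathcal{I} \to \widetilde{X_U}^*$ is generically finite and $\widetilde{X_U}^*$ has dimension two, hence is a scroll (a variety that is the union of a one-parameter family of lines), or else the lines do not move and $\widetilde{X_U}^*$ collapses to dimension one, i.e.\ is a curve. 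Since these are the only alternatives permitted by \cite{CRS}, the proposition follows.

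The one place where actual work is required, rather than a bare citation, is checking that the family of lines arising in our situation genuinely satisfies the hypotheses of \cite{CRS}: that it is an honest one-parameter family (so that $\mathcal{I}$ is two-dimensional) and that $\widetilde{X_U}^*$ is nondegenerate in $(\P^3)^*$, or else disposing of the degenerate case by hand. If $\widetilde{X_U}^*$ were degenerate it would lie in a hyperplane of $\P(U^*)$, which dually forces $\widetilde{X_U}$ to be a cone; a cone over a plane curve is ruled and has a curve as its dual, so this case remains consistent with the dichotomy. I expect this verification of hypotheses to be the main---indeed essentially the only---obstacle, after which the statement is immediate from \cite{CRS}.
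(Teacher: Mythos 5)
Your proposal is correct and takes essentially the same approach as the paper: the paper offers no argument beyond the direct citation of Proposition 1.4(ii) of \cite{CRS}, and you likewise invoke that result, adding a brief (and sound) sketch of why its hypotheses hold in this setting. The extra verification you supply --- irreducibility of $\widetilde{X_U}^*$, the incidence-variety dimension count, and the disposal of the degenerate/cone case --- is reasonable but goes beyond what the paper itself records.
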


The cited reference includes precise conditions that allow the two possibilities to be distinguished.  Proposition 1.2 of \cite{CRS} reveals the relation between $X_U^*\subset \P(V^*)$ and $\widetilde{X_U}^*\subset \P(U^*)$, namely
\begin{prop}
In the above setup, $X_U^*\subset \P(V^*)$ is a cone over $\widetilde{X_U}^*\subset \P(U^*)$ with vertex $U^\perp=\P((V/U)^*)$.
\end{prop}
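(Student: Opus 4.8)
The statement is an instance of the standard description of the dual variety of a subvariety contained in a linear subspace, and I would prove it by unwinding the definition of the conormal variety (this is essentially the argument behind \cite[Prop.~1.2]{CRS}). Write $L=\P(U)\subseteq\P(V)$ for the linear $\P^3$ through which $\phi_U$ factors, so that $X_U\subseteq L$ and, since the $p_i$ are linearly independent, $X_U$ spans $L$. Restriction of functionals gives a surjection $V^*\twoheadrightarrow U^*$ with kernel $U^\perp$, hence a linear projection $\pi$ from $\P(V^*)$, with center $\P(U^\perp)=\P((V/U)^*)$, onto $\P(U^*)$, i.e. a morphism $\pi\colon\P(V^*)\setminus\P(U^\perp)\to\P(U^*)$. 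By definition, the cone over a subvariety $Z\subseteq\P(U^*)$ with vertex $\P(U^\perp)$ is the Zariski closure $\overline{\pi^{-1}(Z)}$. Thus the goal reduces to proving $X_U^{*}=\overline{\pi^{-1}(\widetilde{X_U}^{*})}$.

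The first step is the elementary observation that, because $L$ is a linear subspace, for every smooth point $x$ of $X_U$ (equivalently of $\widetilde{X_U}$, these being the same variety) the embedded tangent space $T_{X_U,x}$ computed in $\P(V)$ is contained in $L$ and coincides with the embedded tangent space $T_{\widetilde{X_U},x}$ computed in $L$. Consequently, for $f\in V^*$, the hyperplane $\V(f)\subseteq\P(V)$ contains $T_{X_U,x}$ if and only if $f|_U\in U^*$ vanishes on $T_{\widetilde{X_U},x}$; that is, either $f|_U=0$, so $[f]\in\P(U^\perp)$, or $f|_U\neq 0$ and the hyperplane $\V(f|_U)\subseteq L$ is tangent to $\widetilde{X_U}$ at $x$.

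From this, both inclusions follow. For $X_U^{*}\subseteq\overline{\pi^{-1}(\widetilde{X_U}^{*})}$: a general point of $X_U^{*}$ is a hyperplane $\V(f)$ tangent to $X_U$ at a smooth point $x$, and it cannot contain $L$ (since $T_{X_U,x}$ is properly contained in $L$ and $X_U$ spans $L$, whence $X_U^{*}\not\subseteq\P(U^\perp)$), so $f|_U\neq 0$ and $[f|_U]\in\widetilde{X_U}^{*}$ by Step~1; thus $X_U^{*}\setminus\P(U^\perp)\subseteq\pi^{-1}(\widetilde{X_U}^{*})$, and taking closures (using that $X_U^{*}$ is irreducible and not contained in $\P(U^\perp)$) gives the inclusion. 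Conversely, a general point $[g]\in\widetilde{X_U}^{*}$ is a hyperplane of $L$ tangent to $\widetilde{X_U}$ at a smooth point $x$; by Step~1, any $f\in V^*$ with $f|_U=g$ satisfies $\V(f)\supseteq T_{X_U,x}$, so $(x,[f])$ lies in the conormal variety $N(X_U)$ and $[f]\in X_U^{*}$. Hence $\pi^{-1}([g])\subseteq X_U^{*}$ for general $[g]$, and, $X_U^{*}$ being closed, $\overline{\pi^{-1}(\widetilde{X_U}^{*})}\subseteq X_U^{*}$.

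The only delicate point is controlling the two limiting arguments — verifying $X_U^{*}$ is irreducible, is not contained in $\P(U^\perp)$, and that the generic behaviour above propagates to all of $X_U^{*}$. This is underpinned by the dimension fact recalled in the previous section, namely that $N(X_U)$ is irreducible of dimension $4=\dim\P(V)-1$, together with $\dim\overline{\pi^{-1}(\widetilde{X_U}^{*})}=\dim\widetilde{X_U}^{*}+\bigl(\dim\P(V^*)-\dim\P(U^*)\bigr)$: the two irreducible varieties $X_U^{*}$ and $\overline{\pi^{-1}(\widetilde{X_U}^{*})}$ then have equal dimension with one containing a dense subset of the other, so they coincide. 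For the paper's examples $\dim U^\perp=1$, so the vertex $\P(U^\perp)$ is a line, consistent with the description $\P(U)=\V(x_1,x_4-x_5)$ of Example~\ref{ex1}. Alternatively, one may simply invoke \cite[Prop.~1.2]{CRS} directly.
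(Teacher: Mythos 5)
Your argument is correct, but it is not the paper's: the paper offers no proof of this statement at all, simply presenting it as an instance of Proposition 1.2 of \cite{CRS} applied to $X_U\subseteq\P(U)\subseteq\P(V)$, whereas you reprove that general fact from scratch. Your route — identifying the cone over $\widetilde{X_U}^*$ with vertex $\P(U^\perp)$ as $\overline{\pi^{-1}(\widetilde{X_U}^*)}$ for the projection dual to $V^*\twoheadrightarrow U^*$, observing that for a smooth point $x$ the embedded tangent plane of $X_U$ in $\P(V)$ lies in $\P(U)$ and coincides with that of $\widetilde{X_U}$, and then deducing the two inclusions from the definition of the conormal variety — is the standard proof of the cited result, so what you gain is self-containedness at the cost of length; what the paper gains by citing \cite{CRS} is brevity and the companion statements (e.g.\ the scroll/curve dichotomy) it uses in the same section. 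Three small points of bookkeeping in your write-up: the inclusion $X_U^*\setminus\P(U^\perp)\subseteq\pi^{-1}(\widetilde{X_U}^*)$ is immediate only for the dense subset of honest tangent hyperplanes at smooth points (it holds for all of $X_U^*\setminus\P(U^\perp)$ only a posteriori), so the closure step should be run on that dense subset; the concluding equal-dimension argument is both unnecessary — your two inclusions plus irreducibility of $X_U^*$ and of $\widetilde{X_U}^*$ already force equality — and not obviously justified as stated, since $\dim X_U^*$ need not be $4$ when $\widetilde{X_U}^*$ degenerates to a curve, a case the paper explicitly allows; and the vertex is a line because $\dim_k U^\perp=2$ (projective dimension $1$), so ``$\dim U^\perp=1$'' should be read projectively. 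None of these affects the validity of the proof.
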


It will be useful for this reason to consider the map  $\pi:\P(V)\rightarrow \P(U^*)$ defined by $\pi(p)=(\ell_1(p):\ldots:\ell_4(p))$, where $\ell_1,\ldots, \ell_4$ are the defining equations of $U^\perp$. The map $\pi$ is projection
from $\P(U^\perp)$ and $\pi(X_U^*) = \widetilde{X_U}^*$. Using a direct approach  Galligo-L\^e obtain in\cite{gl} that $\pi^{-1}(\widetilde{X_U}^*)$ is a (2,2)-scroll in $\P(V^*)$ which they denote by $\FF_{2,2}^*$.  For brevity we write $\FF$ for $\pi^{-1}(\widetilde{X_U}^*)$.

 Galligo-L\^e classify possibilities for the implicit equation 
of $X_U$ by considering the pullback $\phi_U^*$ of 
$\P(U^*) \cap \FF$ to $(\PP)^*$. The two linear forms $L_i$ 
defining $\P(U^*)$ pull back to give a pair of 
bidegree $(2,1)$ forms on $(\PP)^*$. 
\begin{prop}[\cite{gl} \S 6.5]\label{GL} If $\phi_U^*(L_1) \cap \phi_U^*(L_2)$ is infinite, 
then $\phi_U^*(L_1)$ and $\phi_U^*(L_2)$ share a common factor $g$, for which 
the possibilities are:
\begin{enumerate}
\item $deg(g) = (0,1)$.
\item $deg(g) = (1,1)$ ($g$ possibly not reduced).
\item $deg(g) = (1,0)$ (residual system may have double or distinct roots).
\item $deg(g) = (2,0)$ ($g$ can have a double root).
\end{enumerate}
\end{prop}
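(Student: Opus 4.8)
The plan is to reduce the statement to an elementary fact about factorisation of bidegree $(2,1)$ forms on a quadric surface, and then read off the four bidegrees by bookkeeping on multidegrees. Write $g_i = \phi_U^*(L_i)$; by the construction recalled just above, each $g_i$ is a nonzero bihomogeneous form of bidegree $(2,1)$ in the bigraded coordinate ring of $(\PP)^* \cong \Pone \times \Pone$. The first thing to record is that $g_1$ and $g_2$ are linearly independent, equivalently that neither divides the other: this holds because $L_1, L_2$ are distinct linear forms and the map $(\PP)^* \to \P(V^*)$ whose pullback is $\phi_U^*$ has nondegenerate image (the dual scroll $\FF$), so $\phi_U^*$ is injective on linear forms; see \cite{gl}.

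Now assume $\V(g_1) \cap \V(g_2) \subseteq (\PP)^*$ is infinite. Being a closed subset of a surface, it then contains a curve $C$; since $C \subseteq \V(g_1) \cap \V(g_2)$, the irreducible bihomogeneous form cutting out $C$ divides both $g_1$ and $g_2$, so $g := \gcd(g_1, g_2)$ is a non-unit. Factor $g_1 = g\, h$ with $h$ bihomogeneous. Because $g_1 \not\sim g_2$ we cannot have $g \sim g_1$ (that would force $g_1 \mid g_2$, hence $g_2 \sim g_1$), so $h$ is non-constant; and because $g$ is a non-unit, $\deg h \neq \deg g_1 = (2,1)$. A bidegree $\deg h = (a,b)$ with $h \mid g_1$ satisfies $0 \le a \le 2$ and $0 \le b \le 1$, and we have just excluded $(0,0)$ and $(2,1)$; the survivors are $(1,0), (2,0), (0,1), (1,1)$, so that $\deg g = (2,1) - \deg h$ is one of $(1,1), (0,1), (2,0), (1,0)$ — precisely the four cases (1)--(4).

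For the parenthetical refinements I would run a short case analysis, writing $g_i = g\, h_i$ and examining the residual intersection $\V(h_1) \cap \V(h_2)$ and the scheme structure of $\V(g_1,g_2) = \V(g) \cup \V(h_1,h_2)$ along $\V(g)$, using the intersection pairing $(a,b)\cdot(c,d) = ad + bc$ on $\Pone \times \Pone$. For instance, when $\deg g = (2,0)$ the factor $g$ is a binary quadratic in the $\Pone$-variables, hence a product of two linear forms that are distinct or equal (the ``double root''); when $\deg g = (1,0)$ the residual is the intersection of two curves of bidegree $(1,1)$, a length-two subscheme whose two points may or may not coincide; the cases $\deg g = (0,1)$ and $(1,1)$ are handled in the same way. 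This is exactly the bookkeeping carried out in \cite{gl}, so in the paper I would either reproduce it in a couple of lines or simply cite it.

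I do not expect a genuine obstacle here: the argument is entirely formal once the set-up is in place. The only step that needs care — rather than a new idea — is matching up the notation at the start, namely that $\phi_U^*(L_i)$ really is a bidegree $(2,1)$ form on $(\PP)^*$ and that $g_1, g_2$ are linearly independent; both follow from the description of $\widetilde{X_U}^*$ and of the dual scroll $\FF$ given in the preceding paragraphs (and in \cite{gl}). Once these are granted, the gcd computation delivers the classification immediately.
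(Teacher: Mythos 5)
The paper does not prove this proposition---it is imported verbatim from Galligo--L\^e (\cite{gl}, \S 6.5) and used as a black box---so there is no in-paper argument to compare yours against; I will instead assess correctness. Your gcd-based derivation is correct and self-contained. The load-bearing steps are all sound: $g_1=\phi_U^*(L_1)$ and $g_2=\phi_U^*(L_2)$ are nonzero and non-proportional, since $\phi_U^*$ carries the dual coordinate basis $\{X_0,\dots,X_5\}$ of $V^*$ to scalar multiples of the monomial basis of bidegree-$(2,1)$ forms on $(\PP)^*$ and is therefore injective on linear forms (this is more elementary than your appeal to nondegeneracy of the dual scroll, which is a consequence rather than a prerequisite); an infinite closed subset of $\Pone\times\Pone$ has a one-dimensional irreducible component whose irreducible bihomogeneous defining form divides both $g_i$, so $g=\gcd(g_1,g_2)$ is a non-unit; and the bidegree constraint $\deg g\in\{0,1,2\}\times\{0,1\}$ together with $\deg g\ne(0,0)$ and $\deg g\ne(2,1)$ leaves exactly the four cases listed. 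The parenthetical refinements follow from the intersection-theoretic bookkeeping you sketch---for $\deg g=(2,0)$ the factor $g$ is a binary quadratic in $S,T$ with possibly coincident roots, and for $\deg g=(1,0)$ the residual $\V(h_1)\cap\V(h_2)$ with $\deg h_i=(1,1)$ is a length-two scheme whose points may or may not coincide. Since the paper itself supplies no proof, your argument adds genuine content and is almost certainly the argument implicit in \cite{gl}.
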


\begin{exm}\label{ex2} In the following we use capital letters to denote elements  of the various dual spaces. Note that the elements of the basis of $(\PP)^*$ that pair dually to $\{s^2u, stu, t^2u,s^2v, stv, t^2v\}$ are respectively $\{\frac{1}{2}S^2U, STU, \frac{1}{2}T^2U, \frac{1}{2}S^2V, STV, \frac{1}{2}T^2V\}$.  

Recall we have the following dual maps of linear spaces:
$$\begin{matrix} \PP \stackrel{\phi_U}{\longrightarrow} & \P(U)\stackrel{A}{\longrightarrow} \P(V) & (\PP)^*  \stackrel{\phi_U^*}{\longleftarrow} \P(U^*) \stackrel{\pi}{\longleftarrow} \P(V^*) \end{matrix}$$

In Example~\ref{ex1}, $\P(U^*) = \V(X_1,X_4-X_5) \subseteq {\P^5}^*$, $\phi^*_U(X_1)=STU$ and $\phi^*_U(X_4-X_5)=STV-\frac{1}{2}T^2V$, so there is a shared common factor $T$ of
degree $(1,0)$ and  the residual system $\{SU,\frac{1}{2}TV-SV\}$ has distinct
roots $(0:1)\times(1:0),(1:2)\times(0:1)$. Taking points $(1:0)\times(1:0)$ and 
$(1:0)\times(0:1)$ on the line $T=0$ and the points above shows that 
the forms below are in $(\phi_U^*P(U^*))^\perp={\phi_U}^{-1}(\P(U))$
\[
\begin{array}{ccc}
{\phi_U}((0:1)\times(1:0)) & = & t^2u\\
{\phi_U}((1:2)\times(0:1)) & = & (s+2t)^2v\\
{\phi_U}((1:0)\times(1:0)) & = & s^2u\\
{\phi_U}((1:0)\times(0:1)) & = & s^2v
\end{array}
\]
and in terms of our chosen basis the corresponding matrix $A$ is
\[
\left[\begin{matrix} 
0 & 0 & 1 & 0 & 0 & 0 \\ 
0 & 0 & 0 & 1 & 4 & 4 \\ 
1 & 0 & 0 & 0 & 0 & 0 \\ 
0 & 0 & 0 & 1 & 0 & 0 \\
\end{matrix}\right],
\]
whose rows span the expected linear space $U$ with basis $\langle x_0,x_2,x_3,x_4+x_5 \rangle$.
\end{exm}
\subsection{Connecting $\phi_U^*(L_1) \cap \phi_U^*(L_2)$ to syzygies}
There is a pleasant relation between Proposition~\ref{GL} and bigraded
commutative algebra. This is hinted at by the result of \cite{cds} relating the minimal free resolution of $I_W$ to $\P(W) \cap \Sigma_{2,1}$.
\begin{thm}\label{GPsyz} If $\phi_U^*(L_1) \cap \phi_U^*(L_2)$ is infinite, then:
\begin{enumerate}
\item If $deg(g) = (0,1)$ then $U$ is not basepoint  free.
\item If $deg(g) = (1,1)$ then 
\begin{enumerate}
\item if $g$ is reducible then $U$ is not basepoint  free.
\item if $g$ is irreducible then $I_U$ is of Type 3.
\end{enumerate}
\item If $deg(g) = (1,0)$ then $I_U$ is of Type 5. Furthermore
\begin{enumerate}
\item The residual scheme is reduced iff $I_U$ is of Type 5a.
\item The residual scheme has a double root iff $I_U$ is of Type 5b.

\end{enumerate}
\item If $deg(g) = (2,0)$ then $I_U$ is of Type 6. 
\end{enumerate}
\end{thm}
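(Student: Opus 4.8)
The plan is to convert each hypothesis on $g$ into an explicit form for $I_U$ (up to the action of $\mathbf{GL}_2\times\mathbf{GL}_2$ on $(s,t)$ and $(u,v)$) via the duality dictionary of \S 8, and then to recognize that form using \S 3--\S 5. Recall that $\{L_1,L_2\}$ is a basis of $U^\perp$, so $\P(U)=\V(L_1)\cap\V(L_2)$; as in Example~\ref{ex2}, applying $\phi_U$ (that is, the Segre--Veronese map $\tau$ of \eqref{e1}) to the common zero locus of $\phi_U^*(L_1)$ and $\phi_U^*(L_2)$ in $(\PP)^*$ produces points of $\P(U)$, and whenever these span $\P(U)$ we recover a basis of $U$. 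Write this common locus as $\V(g)\cup Z$ with $Z$ zero--dimensional; since $\tau^*\Oc_{\P^5}(1)=\Oc_{\PP}(2,1)$, a form $g$ of bidegree $(a,b)$ has $\deg_{\P^5}\tau(\V(g))=a+2b$, so $\tau(\V(g))$ is a conic when $\deg g=(0,1)$, a line when $\deg g=(1,0)$, a twisted cubic when $\deg g=(1,1)$ is irreducible, and two lines when $\deg g=(2,0)$.

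Cases $(1)$ and $(2)(a)$ go together: if $\deg g=(0,1)$, or $\deg g=(1,1)$ with $g$ reducible, then $\V(g)$ contains a fiber $\Pone\times\{\text{pt}\}$ over a point $[l(u,v)]$, whose $\tau$--image is a conic spanning the $\P^2$ fiber of $\Sigma_{2,1}$ over $[l(u,v)]$; so $\P(U)$ contains that $\P^2$ fiber and $U$ is not basepoint free by Lemma~\ref{P2fiberBP}.

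For the rest $U$ is basepoint free. If $\deg g=(1,1)$ is irreducible, the restriction map $H^0(\Oc_{\PP}(2,1))\to H^0(\Oc_{\V(g)}(3))$ is onto, so $\tau(\V(g))$ is a twisted cubic spanning $\P(U)$; parametrizing $\V(g)$ and normalizing coordinates gives $\P(U)=\V(x_1-x_3,\,x_2-x_4)$, i.e.
\[
I_U=\langle\, s^2u,\ s(sv+tu),\ t(sv+tu),\ t^2v\,\rangle ,
\]
which has the bidegree $(1,0)$ linear syzygy $t\cdot s(sv+tu)-s\cdot t(sv+tu)=0$ with $p=sv+tu$ indecomposable, hence is of Type 3 by Proposition~\ref{LS3-10}; this is $(2)(b)$. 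If $\deg g=(2,0)$ then $g$ has distinct $s,t$--roots: a double root would (since the recovered line $\tau(\V(g)_{\mathrm{red}})$ is a $\Pone$ fiber of $\Sigma_{2,1}$, forcing a $(0,1)$ syzygy and the Lemma~\ref{LS1} normal form with $p=s^2$) make one of the forms $l_i$ of Theorem~\ref{T5exact} vanish, contradicting basepoint freeness as in Corollary~\ref{NoMix}. So $\tau(\V(g))$ is a pair of skew lines spanning $\P(U)$ and $U=\langle Q_1u,Q_1v,Q_2u,Q_2v\rangle$ with $Q_1,Q_2$ independent quadrics; basepoint freeness gives $\sqrt{\langle Q_1,Q_2\rangle}=\langle s,t\rangle$, so $I_U$ is of Type 6 by Proposition~\ref{LS2}; this is $(4)$. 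Finally, if $\deg g=(1,0)$ then $\tau(\V(g))$ is a line $\ell_0$, a $\Pone$ fiber of $\Sigma_{2,1}$ over a point of $\nu_2(\Pone)$; Proposition~\ref{LinSyzGeom}(1) then yields a bidegree $(0,1)$ first syzygy, and a second one is impossible (it would force $\deg g=(2,0)$, as in case $(4)$), so $I_U$ is of Type 5 by Lemma~\ref{LS1} and Proposition~\ref{LS3}. In the Lemma~\ref{LS1} normal form, the split of Corollary~\ref{T5PD} into 5a/5b is governed by whether $q(u,v)=\DET A(u,v)$ has distinct roots, which a direct computation matches with reducedness of the residual scheme $Z$; this gives $(3)$.

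The main obstacle is the last step. When $\deg g=(1,0)$ and $Z$ is non--reduced, $\tau$ of the reduced common locus spans only a plane, so one cannot directly read off $U$ and must instead descend to the Lemma~\ref{LS1} normal form and translate the length--two scheme structure of $Z$ into the coefficient matrix $A(u,v)$; making the equivalence ``$Z$ reduced $\Longleftrightarrow$ $\DET A(u,v)$ has distinct roots'' precise (and, in case $(4)$, the analogous computation ruling out a double $s,t$--root of $g$) is the one genuinely laborious point. A minor additional task is to check, in each spanning case, that the $\tau$--images are in general enough position to span all of $\P(U)$; this follows from the surjectivity statement for the twisted cubic, from independence of the two quadrics for the skew lines, and from nondegeneracy of the conic otherwise.
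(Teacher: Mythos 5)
Your approach is genuinely different from the paper's in flavor: the paper works directly with $U^{\perp}=\spn\{L_1,L_2\}$, writes the $2\times6$ coefficient matrix in the dual basis, and reads off explicit generators of $U$ as its kernel; you instead push $\V(g)$ forward by the Segre--Veronese map $\tau$ into $\P(U)\cap\Sigma_{2,1}$, compute the degree of $\tau(\V(g))$, and invoke the geometric dictionary of Lemma~\ref{P2fiberBP} and Proposition~\ref{LinSyzGeom}. This is a clean way to handle cases (1), (2)(a), and (4), and it supplies the argument for case (4), which the paper omits. Two points, however, are genuine gaps.

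First, the normalization you assert in case (2)(b) is not justified and appears to be wrong as stated. You claim that parametrizing $\V(g)$ gives $I_U=\langle s^2u, s(sv+tu), t(sv+tu), t^2v\rangle$, but the span of $\tau(\V(g))$ has to be computed using the pairing that the paper fixes in \S 8, where $X_0$ is dual to $\tfrac12 S^2U$, $X_1$ to $STU$, etc.; with that pairing and $g=SV-TU$, the curve $[a:b]\mapsto (aS+bT)^2(aU+bV)$ has coordinates $(a^3,\,2a^2b,\,ab^2,\,a^2b,\,2ab^2,\,b^3)$ and spans $\spn\{s^2u,\ 2stu+s^2v,\ t^2u+2stv,\ t^2v\}$. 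That subspace has \emph{no} bidegree-$(1,0)$ linear syzygy (matching $sp,tp$ to the basis forces a $2\times2$ system with determinant $3$), so the step ``$\ldots$ hence Type~3 by Proposition~\ref{LS3-10}'' does not follow. You need either to verify that your claimed $\P(U)=\V(x_1-x_3,x_2-x_4)$ actually arises from $\tau(\V(g))$ under the conventions of \S 8 --- which it does not --- or to explain why the factors of $2$ can be removed by a change of variables (they cannot, since linear syzygies are $\mathbf{GL}_2\times\mathbf{GL}_2$--invariant). This same factor-of-$2$ omission is already present in the paper's kernel computation for this case, so you are not misreading the paper; but it means neither your argument nor the paper's establishes (2)(b) as stated, and this needs to be flagged and resolved rather than asserted.

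Second, in case (3) you reduce the 5a/5b dichotomy to the equivalence ``$Z$ reduced $\Longleftrightarrow\ \DET A(u,v)$ has distinct roots,'' and then explicitly defer it as ``the one genuinely laborious point.'' That equivalence \emph{is} the content of parts (3)(a)--(3)(b); nothing in the proposal establishes it. After passing to the Lemma~\ref{LS1}/Proposition~\ref{LS3} normal form one must actually relate the length-two scheme $Z=\V(h_1,h_2)\subset(\PP)^*$ to the matrix $A(u,v)$ of Theorem~\ref{T5exact} and check that a nonreduced $Z$ forces $L_1=L_2$ in $\DET A=L_1L_2$. Until that computation is carried out, case (3) remains unproved.
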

\begin{proof}
We use the notational conventions of \ref{ex2}. If $deg(g) = (0,1)$ then we may assume after a change of coordinates
that $\phi_U^*(L_1)= PU$ and $\phi_U^*(L_2)= QU$, with $P,Q$ of bidegree
$(2,0)$ and independent. In particular, denoting by $p,q\in R$ the dual elements of $P,Q$ a basis for $R_{2,1}$ is 
$\{pu,qu,ru, pv,qv,rv\}$. Hence $U = \spn \{ru, pv,qv,rv\}$, so
if $r = l_1(s,t)l_2(s,t)$ then $\langle v, l_1(s,t) \rangle$ is
an associated prime of $I_U$ and  $U$ is not basepoint  free.

Next, suppose $deg(g) = (1,1)$. If $g$ factors, then after 
a change of variable we may assume $g=SU$ and $U^{\perp} = \spn \{S^2U,STU \}$.
This implies that $\langle v, t \rangle$ is an associated prime of
$I_U$, so $U$ is not basepoint  free. If $g$ is irreducible, then
\[
g=a_0SU+a_1SV+a_2TU+a_3TV,\mbox{ with }a_0a_3-a_1a_2 \ne 0.
\]
Since $U^\perp = \spn \{ gS, gT \}$, $U$ is the kernel of 
\[
\left[\begin{matrix} 
a_0 & a_2 & 0 & a_1 & a_3 & 0 \\ 
0 & a_0 & a_2 & 0 & a_1 & a_3 \\
\end{matrix}\right],
\]
so $U$ contains the columns of 
\[
\left[\begin{matrix} 
a_3 & 0\\
a_1 & a_3 \\
0 & a_1 \\
-a_2 & 0 \\
-a_0 & -a_2\\
0 &  -a_0 \\
\end{matrix}\right].
\]
In particular, 
\[
\begin{array}{ccc}
a_3s^2u+a_1stu-a_2s^2v-a_0stv &= &s(a_3su+a_1tu-a_2sv-a_0tv) = sp\\
a_3stu+a_1t^2u-a_2stv-a_0t^2v &= &t(a_3su+a_1tu-a_2sv-a_0tv) = tp\\
\end{array}
\]
are both in $I_U$, yielding a linear syzygy of bidegree $(1,0)$.
Since $p$ is irreducible, the result follows from Proposition~\ref{LS3-10}. 
The proofs for the remaining two cases are similar and omitted.
\end{proof}
There is an analog of Proposition~\ref{GL} when the intersection
of the pullbacks is finite and  a corresponding connection to the minimal
free resolutions, which we leave for the interested reader.\newline
\noindent{\bf Concluding remarks} Our work raises a number of questions:
\begin{enumerate}
\item How much generalizes to other line 
bundles $\OPP(a,b)$ on $\PP$? We are at work extending 
the results of \S 7 to a more general setting.
\item What can be said about the minimal free resolution 
if $I_U$ has basepoints?
\item Is there a direct connection between embedded primes and the 
implicit equation?
\item If $U \subseteq H^0(\mathcal{O}_X(D))$ is four dimensional and
has base locus of dimension at most zero and $X$ is a toric surface, then 
the results of \cite{bdd} give a bound on the degree $\mu$ needed to 
determine the implicit equation. What can be said about the syzygies
in this case? 
\item More generally, what can be said about the multigraded free 
resolution of $I_U$, when $I_U$ is graded by $\Pic(X)$? 
\end{enumerate}

\noindent{\bf Acknowledgments} Evidence for this work was provided
by many computations done using {\tt Macaulay2}, by Dan
Grayson and Mike Stillman. {\tt Macaulay2} is freely available at 
\begin{verbatim}
http://www.math.uiuc.edu/Macaulay2/
\end{verbatim}
and scripts to perform the computations are available at 
\begin{verbatim}
http://www.math.uiuc.edu/~schenck/O21script
\end{verbatim}
We thank Nicol\'as Botbol, Marc Chardin and Claudia Polini 
for useful conversations, and an anonymous referee for a 
careful reading of the paper.
\bibliographystyle{amsplain}

\begin{thebibliography}{10}

\bibitem{ACD} {\scshape A.\ Aramova, K.\ Crona, E.\ De Negri},
Bigeneric initial ideals, diagonal subalgebras
and bigraded Hilbert functions, \textit{J. Pure Appl. Algebra} {\bf 150}  (2000),  312--335. 

\bibitem{bot} {\scshape N.\ Botbol}, 
The implicit equation of a multigraded hypersurface,
Journal of Algebra, 
\textit{J. Algebra} {\bf 348} (2011), 381-401.

\bibitem{bdd} {\scshape N.\ Botbol, A.\ Dickenstein, M.\ Dohm},
Matrix representations for toric parametrizations,
\textit{Comput. Aided Geom. Design} {\bf 26}  (2009),  757--771. 

\bibitem{be} {\scshape D.\ Buchsbaum, D.\ Eisenbud}, 
What makes a complex exact?,
\textit{J. Algebra} {\bf 25} (1973), 259--268.

\bibitem{bj} {\scshape L.\ Bus\'e, J.-P.\ Jouanolou},
On the closed image of a rational map and the implicitization problem,
\textit{J. Algebra} {\bf 265} (2003),  312--357. 

\bibitem{bc} {\scshape L.\ Bus\'e, M.\ Chardin},
Implicitizing rational hypersurfaces using approximation complexes,
\textit{J. Symbolic Computation} {\bf 40} (2005), 1150--1168. 

\bibitem{c} {\scshape M.\ Chardin},
Implicitization using approximation complexes, in 
``Algebraic geometry and geometric modeling", 
Math. Vis., Springer, Berlin (2006), 23--35. 

\bibitem{CRS} {\scshape C.\ Ciliberto, F.\ Russo, A.\ Simis},
Homaloidal hypersurfaces and hypersurfaces with vanishing Hessian, 
\textit{Adv. Mat} {\bf 218} (2008),  1759--1805. 

\bibitem{cox1} {\scshape D.\ Cox},
The moving curve ideal and the Rees algebra,
\textit{ Theoret. Comput. Sci.} {\bf 392} (2008), 23--36. 

\bibitem{cox2} {\scshape D.\ Cox}, 
Curves, surfaces and  syzygies, in 
``Topics in algebraic geometry and geometric modeling",
Contemp. Math. {\bf 334} (2003)  131--150.

\bibitem{cds} {\scshape D.\ Cox, A.\ Dickenstein, H.\ Schenck}, 
A case study in bigraded commutative algebra, in 
``Syzygies and Hilbert Functions", edited by Irena Peeva, 
Lecture notes in Pure and Applied Mathematics 254, (2007), 67--112.

\bibitem{cgz} {\scshape D.\ Cox, R. \ Goldman, M. \ Zhang}, On the
validity of implicitization by moving quadrics for rational surfaces
with no basepoints, \textit{J. Symbolic Computation} {\bf 29} (2000), 419--440.


\bibitem{d} {\scshape W.L.F.\ Degen}, The types of rational $(2,1)$-B\'ezier surfaces. {\em Comput. Aided Geom. Design} {\bf 16}  (1999), 639--648.

\bibitem{de} {\scshape A.\ Dickenstein, I.\ Emiris}, Multihomogeneous
resultant formulae by means of complexes, \textit{J.\ Symbolic Computation}
 {\bf 34} (2003), 317--342.


\bibitem{ebig} {\scshape D.\ Eisenbud}, \textit{Commutative Algebra with
a view towards Algebraic Geometry}, Springer-Verlag,
Berlin-Heidelberg-New York, 1995.

\bibitem{ehv} {\scshape D.\ Eisenbud, C.\ Huneke, W.\ Vasconcelos}, 
Direct methods for primary decomposition, \textit{Invent. Math.}
{\bf 110} (1992),  207--235.

\bibitem{edge} {\scshape W.\ Edge},
\textit{The theory of ruled surfaces},
Cambridge University Press, 1931.

\bibitem{egl} {\scshape M.\ Elkadi, A.\ Galligo and  T.\ H.\ L\^{e}},
Parametrized surfaces in $\P^3$ of bidegree $(1,2)$,
\textit{Proceedings of the 2004 International Symposium on Symbolic and
Algebraic Computation}, ACM, New York, 2004, 141--148.

\bibitem{gl} {\scshape A.\ Galligo, T.\ H.\ L\^{e}},
General classification of $(1,2)$ parametric surfaces in ${\mathbb P}^3$, 
in ``Geometric modeling and algebraic geometry'', Springer, Berlin, 
(2008) 93--113.

\bibitem{gkz} {\scshape I.\ Gelfand, M.\ Kapranov, A.\ Zelevinsky},
\textit{Discriminants, Resultants and  Multidimensional Determinants}, 
Birkhauser, Boston, 1984.

\bibitem{harris} {\scshape J.\ Harris}, \textit{Algebraic Geometry, A
First Course}, Springer-Verlag, Berlin-Heidelberg-New York, 1992.

\bibitem{h} {\scshape R.\ Hartshorne}, \textit{Algebraic Geometry},
Springer-Verlag, Berlin-Heidelberg-New York, 1977.

\bibitem{hsv1} {\scshape J.\ Herzog, A.\ Simis, W.\ Vasconcelos}
Approximation complexes of blowing-up rings,
\textit{J. Algebra} {\bf 74} (1982), 466--493.

\bibitem{hsv2} {\scshape J.\ Herzog, A.\ Simis, W.\ Vasconcelos}
Approximation complexes of blowing-up rings II, 
\textit{J. Algebra} {\bf 82} (1983), 53--83.

\bibitem{m} {\scshape J.\ Migliore}, \textit{Introduction to Liason
Theory and Deficiency Modules}, Progress in Math.\ vol.\ 165,
Birkh\"auser, Boston Basel, Berlin, 1998.


\bibitem{salmon} {\scshape G.\ Salmon}, \textit{Trait\'e de G\'eom\'etrie analytique a trois dimensiones}, Paris, Gauthier-Villars, 1882.

\bibitem{sc} {\scshape T.\ W.\ Sederberg, F.\ Chen}, Implicitization
using moving curves and surfaces, in {\sl Proceedings of
SIGGRAPH}, 1995, 301--308.

\bibitem{sgd} {\scshape T.\ W.\ Sederberg, R.\ N.\ Goldman and H.\ Du},
Implicitizing rational curves by the method of moving algebraic
curves, \textit{J.\ Symb.\ Comput.} {\bf 23} (1997), 153--175.

\bibitem{ssqk} {\scshape T.\ W.\ Sederberg, T.\ Saito,
D.\ Qi and K.\ S.\ Klimaszewksi}, Curve implicitization using
moving lines, \textit{Comput.\ Aided Geom.\ Des.} {\bf 11} (1994),
687--706.

\bibitem{z1} {\scshape S.\ Zube},
Correspondence and $(2,1)$-B\'ezier surfaces,
\textit{Lithuanian Math. J.} {\bf 43}  (2003), 83--102.

\bibitem{z2} {\scshape S.\ Zube},
Bidegree $(2,1)$ parametrizable surfaces in ${\mathbb P}^3$,
\textit{Lithuanian Math. J.} {\bf 38}  (1998), 291--308.

\end{thebibliography}

\end{document}